\numberwithin{equation}{section}
\newtheorem{Theorem}{Theorem}[section]
\newtheorem{Proposition}[Theorem]{Proposition}
 { \theoremstyle{definition}
\newtheorem{Definition}[Theorem]{Definition}
\newtheorem{Example}[Theorem]{Example}
\newtheorem{Remark}[Theorem]{Remark} }
\newcommand\fg{{\frak{g}}}
\newcommand\fh{{\frak h}}
\newcommand\fm{{\frak m}}
\newcommand\fn{{\frak n}}
\newcommand\fp{{\frak p}}
\newcommand\fa{{\frak a}}
\newcommand\fb{{\frak b}}
\newcommand\fd{{\frak d}}
\newcommand\fs{{\frak s}}
\newcommand\fu{{\frak u}}
\newcommand\ba{{\mathbf a}}
\newcommand\bA{{\mathbf A}}
\newcommand\bx{{\mathbf x}}
\newcommand\bu{{\mathbf u}}
\newcommand\by{{\mathbf y}}
\newcommand\bn{{\mathbf n}}
\newcommand\bv{{\mathbf v}}
\newcommand{\fgl}{\mathop{{\frak g \frak l}}}
\newcommand{\fsl}{\mathop{{\frak s \frak l}}}
\newcommand{\fco}{\mathop{{\frak c \frak o}}}
\newcommand{\fso}{\mathop{{\frak s \frak o}}}
\newcommand\RR{{\mathbb R}}
\newcommand{\GL}{\mathop{{\rm GL}}}
\newcommand{\SO}{\mathop{{\rm SO}}}
\newcommand{\Spin}{\operatorname{Spin}}
\newcommand{\G}{\mathop{{{\rm G}_{2}^*}}}
\newcommand\ip{{\langle\cdot \,,\cdot \rangle}}
\newcommand{\Span}{\operatorname{span}}
\newcommand{\tr}{\operatorname{tr}}
\newcommand{\diag}{\operatorname{diag}}
\begin{document}
\allowdisplaybreaks

\newcommand{\arXivNumber}{1705.00023}

\renewcommand{\PaperNumber}{081}

\FirstPageHeading

\ShortArticleName{Local Type I Metrics with Holonomy in ${\rm G}_{2}^*$}

\ArticleName{Local Type I Metrics with Holonomy in $\boldsymbol{{\rm G}_{2}^*}$}

\Author{Anna FINO~$^\dag$ and Ines KATH~$^\ddag$}

\AuthorNameForHeading{A.~Fino and I.~Kath}

\Address{$^\dag$~Dipartimento di Matematica G. Peano, Universit\`a di Torino,\\
\hphantom{$^\dag$}~Via Carlo Alberto 10, Torino, Italy}
\EmailD{\href{mailto:annamaria.fino@unito.it}{annamaria.fino@unito.it}}

\Address{$^\ddag$~Institut f\"ur Mathematik und Informatik, Universit\"at Greifswald,\\
\hphantom{$^\ddag$}~Walther-Rathenau-Str. 47, D-17487 Greifswald, Germany}
\EmailD{\href{mailto:ines.kath@uni-greifswald.de}{ines.kath@uni-greifswald.de}}

\ArticleDates{Received October 24, 2017, in final form July 29, 2018; Published online August 03, 2018}

\Abstract{By [arXiv:1604.00528], a list of possible holonomy algebras for pseudo-Rieman\-nian manifolds with an indecomposable torsion free ${\rm G}_{2}^*$-structure is known. Here indecomposability means that the standard representation of the algebra on ${\mathbb R}^{4,3}$ does not leave invariant any proper non-degenerate subspace. The dimension of the socle of this representation is called the type of the Lie algebra. It is equal to one, two or three. In the present paper, we use Cartan's theory of exterior differential systems to show that all Lie algebras of Type I from the list in [arXiv:1604.00528] can indeed be realised as the holonomy of a~local metric. All these Lie algebras are contained in the maximal parabolic subalgebra $\mathfrak p_1$ that stabilises one isotropic line of ${\mathbb R}^{4,3}$. In particular, we realise $\mathfrak p_1$ by a local metric.}

\Keywords{holonomy; pseudo-Riemannian manifold; exterior differential system; torsion-free G-structures}

\Classification{53C29; 53C50; 53C10}

\section{Introduction}

Many problems in geometry can be rephrased as the problem of locally prescribing a given group as holonomy, and this can be reduced to a PDE problem in a number of ways, but most of these lead to PDE that are either degenerate or overdetermined in some way, so the methods of exterior differential systems turn out to be essentially involved. In \cite{Bryant} Bryant showed that the local existence of many non-Riemannian holonomy groups is based on the translation of the structure equations for a given holonomy group into an exterior differential system, and then to use Cartan--K\"ahler theory~\cite{BCG} to conclude the existence of such metrics. We recall that an exterior differential system is a system of equations on a manifold defined by equating to zero a~number of exterior differential forms.

\looseness=1 In the present paper, we consider $7$-dimensional pseudo-Riemannian manifolds $(M, g)$ with holonomy $H$ contained in the non-compact subgroup $\G \subset \SO(4, 3)$. This is the pseudo-Riemannian analogue of a torsion-free ${\rm G}_2$-structure, which is well known from the holonomy theory of Riemannian manifolds since ${\rm G}_2$ is one of the groups on Berger's list~\cite{Berger}. While torsion-free ${\rm G}_2$-structures exist on Riemannian 7-manifolds, their pseudo-Riemannian analogues are structures on manifolds of signature $(4,3)$. The Lie group $\G$ can be viewed either as the stabiliser of a certain generic 3-form, the stabiliser of a non-isotropic element of the real spinor representation of $\Spin(4, 3)$ or the stabiliser of a cross product on ${\mathbb R}^{4,3}$. Therefore a torsion-free $\G$-structure on a pseudo-Riemannian manifold M of signature $(4, 3)$ can be understood as a~parallel generic $3$-form, a parallel non-isotropic spinor field or a parallel cross-product~$\times$ on~M.

Examples of signature (4,3)-metrics with holonomy group equal to $\G$ have been constructed, see for instance \cite{ALN, CLSS, FL, GW, LP, W2}. In the present paper we are interested in the case where the holonomy is strictly contained in~$\G$.

In \cite{FK} a classification of indecomposable Berger algebras strictly contained in the Lie algebra~$\mathfrak g_2^*$ was obtained, where by indecomposable we mean that the holonomy representation does not leave invariant any proper non-degenerate subspace. The indecomposable Berger algebras $\fh \subset \mathfrak g_2^* \subset \mathfrak{so}(4, 3)$ have been distinguished by the dimension of the socle, i.e., of the maximal semisimple subrepresentation of their natural representation on ${\mathbb R}^{4,3}$. As in \cite{FK} we will say that $\fh$ is of Type I, II or III, if, respectively, the dimension of the socle is one, two or three. The types can be described in terms of the two 9-dimensional maximal parabolic subalgebras $\frak p_1$, $\frak p_2 $ of $\frak g_2^*$, which can be characterised by the action of $\G$ on isotropic subspaces of $\mathbb R^{4,3}$. More precisely, $\frak p_1$ is the Lie algebra of the stabiliser of an isotropic line and $\frak p_2$ is the stabiliser of an isotropic 2-plane spanned by two vectors $b_1$, $b_2$ satisfying $b_1 \times b_2 = 0$. Moreover, $\frak p_1 = {\frak {gl}} (2,\mathbb R) \ltimes \frak m$, where $\frak m$ is three-step nilpotent, is of Type I and $\frak p_2 = {\frak {gl}} (2,\mathbb R) \ltimes \frak n$, where $\frak n$ is two-step nilpotent, is of Type II.
Both $\frak p_1$ and $\frak p_2$ are indecomposable Berger algebras.

By \cite{FK}, $\frak h$ is contained, up to conjugation, in $\frak p_1$ if $\frak h$ is of Type I or III. If $\frak h$ is of Type II, then $\frak h \subset \frak p_2$ up to conjugation. In this way we obtained the list of possible holonomy algebras for pseudo-Riemannian manifolds with a torsion free $\G$-structure. To complete the classification of holonomy algebras one must prove that all Berger algebras can be realised as holonomy algebras. In~\cite{FK}, we already did this for some of the Berger algebras, which we realised as holonomy algebras of left-invariant metrics on Lie groups. For Type I, we realised the nilpotent Lie algebra $\frak m$ and, furthermore, a 7-dimensional solvable Lie algebra and a 6-dimensional nilpotent Lie algebra. For Type~II we realised $\frak n$, ${\frak {sl}}(2, \mathbb R) \ltimes \frak n$ and 3-dimensional abelian example, and for Type III a~three-dimensional abelian Lie algebra. Moreover, we know which Berger algebras are holonomy algebras of symmetric spaces with an invariant $\G$-structure~\cite{FK, K}. Furthermore, Willse proved for some of the Berger algebras that they appear as the holonomy of an ambient metric. For instance, $\fn$ arises as a holonomy algebra in this way~\cite{W}.

In the present paper we construct local metrics that have holonomy algebras of Type I. Since the list of Berger algebras of Type I is rather long, we will give all the details only for those Berger algebras that are maximal or minimal in the following sense. We consider the largest Berger algebra of Type~I, i.e., the parabolic subalgebra $\frak p_1$ and we consider all Berger algebras that do not contain all of $\fm$. For each Lie algebra $\fh$ of this kind we use an adapted coframe to write the structure equations for the $H$-structure and Cartan's theory of exterior differential systems. This will give a normal form for a metric whose holonomy is contained in the considered Berger algebra $\fh$. Then we use this normal form to find a metric whose holonomy is equal to~$\fh$. For each of the remaining Berger algebras $\fh$ from the list we will give only an example of a~metric that shows that $\fh$ is indeed a holonomy algebra. In this way we prove
\begin{Theorem}\label{T}
Each indecomposable Berger algebra of Type I is the holonomy algebra of a~metric of signature $(4,3)$.
\end{Theorem}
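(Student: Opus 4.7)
The plan is to realise each indecomposable Berger algebra $\fh$ of Type~I as a holonomy algebra by constructing a torsion-free $H$-structure on an open subset of $\mathbb R^7$. Since every Type~I Berger algebra sits (up to conjugation) inside $\frak p_1$, and $\frak p_1$ preserves an isotropic line in $\mathbb R^{4,3}$, the first step is to choose an adapted coframe $(\theta^1,\ldots,\theta^7)$ reflecting the flag of isotropic subspaces stabilised by $\frak p_1$, refining it as necessary to adapt to $\fh$. In this coframe the Levi-Civita connection form $\omega$ takes values in $\fh$, and one writes down the first structure equations $d\theta^i + \omega^i{}_j \wedge \theta^j = 0$ together with the linear relations on the components of $\omega$ that cut out $\fh$ inside $\fso(4,3)$.

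Next, for each $\fh$ that is maximal (namely $\frak p_1$ itself) or that does not contain all of $\fm$, I would set up the exterior differential system on the bundle of $\fh$-frames whose integral manifolds correspond to torsion-free $\fh$-connections. Applying Cartan's test, one checks involutivity by computing the reduced Cartan characters $s_k$ of the tableau of free derivatives of $\omega$ and matching $\sum k s_k$ against the dimension of the space of admissible prolongations. Once involutivity is established, the Cartan--K\"ahler theorem yields real-analytic solutions; integrating the structure equations then gives a normal form for a metric whose holonomy algebra is a priori contained in~$\fh$. The normal form will depend on a controlled number of arbitrary functions of a controlled number of variables, as dictated by the last non-zero Cartan character.

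Having secured a normal form, the second step is to upgrade the inclusion $\mathrm{hol}\subset\fh$ to an equality. For this one invokes the Ambrose--Singer theorem: the holonomy algebra at a point is spanned by the curvature operators $R(X,Y)$ together with their iterated covariant derivatives $(\nabla^k R)(X,Y)$. Hence one computes $R$ from the normal form, makes a generic choice of the free functions, and checks that $R$ and finitely many $\nabla^k R$ already span all of $\fh$. Combined with the a priori inclusion from the EDS construction, this gives the desired equality. For the Berger algebras not treated through this normal-form machinery we would instead exhibit a single explicit metric in closed form (e.g., by starting from a well-chosen dependence of the coframe on the coordinates on $\mathbb R^7$) and perform the same Ambrose--Singer verification.

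The main obstacle will be the case-by-case involutivity check: the torsion of the generic $\fh$-connection has to be absorbed into the connection form, and the resulting tableau must be shown to have the right Cartan characters for each $\fh$ in the Type~I list from~\cite{FK}. A secondary but unavoidable difficulty is the equality step: one has to verify that the arbitrary functions in the normal form can be specialised so that the components of $R$ and its covariant derivatives generate the full algebra $\fh$ rather than a proper Berger subalgebra; this relies on understanding the $\fh$-module structure of the space of formal curvature tensors $\mathcal K(\fh)$ that was analysed in~\cite{FK}.
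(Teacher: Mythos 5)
Your proposal is correct in its overall architecture and matches the paper's strategy at every major joint: pass to an adapted coframe reflecting the isotropic flag stabilised by $\fp_1$, write the structure equations with connection form constrained to $\fh$, derive a local normal form for metrics with holonomy contained in $\fh$, use Ambrose--Singer to upgrade the inclusion to equality, and treat the maximal/minimal algebras via normal forms while handling the remaining algebras containing $\fm$ by single explicit examples. The one genuine divergence is in how existence is established. You propose running the full Cartan machinery --- absorbing torsion, computing the reduced characters of the tableau, verifying involutivity, and invoking Cartan--K\"ahler --- and you correctly flag this case-by-case involutivity check as the main obstacle. The paper avoids that obstacle entirely: it integrates the structure equations directly (repeatedly normalising the frame by exponentials of elements of $\fm$ and $\fgl(2,\RR)$ to kill coefficients), which reduces each case to an explicit first-order PDE system for a handful of coefficient functions; these systems are then solved by successive quadrature after checking the cross-derivative compatibility conditions by hand (e.g., the condition $(q_2)_{x_4x_4}=2(q_2)_{x_3x_7}$ in the Type I 2(b) case). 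Cartan--K\"ahler theory appears only in remarks, to count the generality of solutions, not to prove existence. Your approach buys uniform machinery and real-analytic existence in principle; the paper's buys completely explicit smooth metrics and sidesteps the tableau computations. One further caution on your equality step: asserting that a ``generic choice of the free functions'' makes $R$ and $\nabla^kR$ span $\fh$ is not by itself a proof --- since one does not know a priori that the resulting holonomy is indecomposable, genericity arguments via the classification of Berger subalgebras do not immediately apply, and the paper instead exhibits one concrete choice per algebra and computes specific components of $R_{ij}$ and $(\nabla R)_{ij}$ that visibly generate $\fh$ as a Lie algebra (including, in the $\fp_1$ case, a bracket $[R_{56},R_{57}]$ to show the $\RR^2$-part of $\fm$ is reached). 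You would need to supply such explicit verifications to close the argument.
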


\noindent
{\bf Notation.} If $b_1,\dots,b_n$ is a basis of a vector space $W$, then we denote by $b^1,\dots,b^n$ its dual basis of $W^*$. Furthermore, $b^{i_1\dots i_k}:=b^{i_1}\wedge\dots\wedge b^{i_k}\in\bigwedge^k W^*$.

\section[Holonomy representations contained in $\G$ and their type]{Holonomy representations contained in $\boldsymbol{\G}$ and their type}

The group $\G$ is known to be one of the groups on Berger's list of the holonomy groups of irreducible pseudo-Riemannian manifolds. In general, the holonomy representation of a pseudo-Riemannian metric is not completely reducible since it can leave invariant isotropic subspaces. Hence one is interested not only in irreducible holonomy representations but also in the much wider class of indecomposable ones, that is, in those whose natural representation on the tangent space does not contain any non-degenerate invariant subspace. Lie algebras of holonomy groups are called holonomy algebras and their natural representation on the tangent space is called holonomy representation. We are interested in this representations rather than in the holonomy algebra as an abstract Lie algebra. Therefore we consider the holonomy algebra of a pseudo-Riemannian manifold of signature $(p,q)$ always as a subalgebra of~$\fso(p,q)$. Analogously to holonomy groups, it is called indecomposable if it does not leave invariant any non-degenerate subspace of~$\RR^{p,q}$.

This paper is part of a project whose aim is the classification of indecomposable holonomy algebras strictly contained in the Lie algebra $\fg^*_2$ of $\G$. Here we want to understand $\G$ as the stabiliser of the 3-form
\begin{gather}\label{Eomega1}
\omega=\sqrt 2\big(e^{167}+e^{235}\big)-e^4\wedge \big(e^{15}-e^{26}-e^{37}\big)
\end{gather}
on $\RR^7$, where $e_1,\dots,e_7$ is a basis of $\RR^7$. This 3-form induces a metric
\begin{gather}
\ip = 2\big(e^1\cdot e^5+e^2\cdot e^6+e^3\cdot e^7\big)- \big(e^4\big)^2 \label{Eip1}
\end{gather}
of signature $(4,3)$ on $\RR^7$. In particular, we consider $\G$ as a subgroup of $\SO(4,3)$.

In \cite{FK}, we obtained a classification of indecomposable Berger algebras strictly contained in the Lie algebra $\fg^*_2\subset \fso(4,3)$ on $\RR^{4,3}$. We distinguished three types of such algebras corresponding to the dimension of the maximal semisimple subrepresentation of their natural representation on~${\mathbb R}^{4,3}$. This subrepresentation is called socle. If a holonomy algebra $\fh$ is indecomposable, then its socle is (totally) isotropic. Hence, for $\fh\subset\fg_2^*$ it has dimension one, two or three. Accordingly, we will say that $\frak h$ is of Type I, II or III.

Here we want to study the question whether Berger algebras of Type I are indeed holonomy algebras, i.e., whether there are local metrics such that these Berger algebras are the holonomy algebras of these metrics. Let us first recall the classification of indecomposable Berger algebras of Type~I.

Let $\fh$ be the holonomy algebra of a pseudo-Riemannian manifold $M^{4,3}$ of signature $(4,3)$. Here we always assume that $\fh$ is indecomposable. Suppose that the holonomy representation of~$\fh$ on $V:=T_{x_0}\big(M^{4,3}\big)$ is contained in $\fg_2^*\subset \fso(V,\ip)$, where $\ip$ denotes the metric of~$M^{4,3}$ at~$x_0$. If $\fh$ is of Type~I, then we may choose a basis of $V$ which gives us an identification $V\cong\RR^7$ such that~(\ref{Eomega1}) and~(\ref{Eip1}) hold and, in addition, such that~$\fh$ is a subalgebra of the maximal parabolic subalgebra
\begin{gather}\label{EhI}
\fh^I:=\big\{ h(A,v,u,y)\,|\, A\in\fgl(2,\RR),\, v\in\RR,\, u, y \in\RR^2\big\}\cong \fp_1
\end{gather}
of $\fg_2^*$, where
\begin{gather*}h(A,v, u,y)= \left(
\begin{matrix}
\tr A &-u_2&u_1&\sqrt 2 v&0&-y_1&-y_2\\
0&a_1&a_2&\sqrt 2 u_1&y_1&0&v\\
0&a_3&a_4&\sqrt 2 u_2&y_2&-v&0\\
0&0&0&0&\sqrt 2 v&\sqrt 2 u_1&\sqrt 2 u_2\\
0&0&0&0&-\tr A&0&0\\
0&0&0&0&u_2&-a_1&-a_3\\
0&0&0&0&-u_1&-a_2&-a_4
	\end{matrix}\right)\end{gather*}
for	$ A:=\left(\begin{smallmatrix} a_1&a_2\\a_3&a_4\end{smallmatrix}\right)$, $y=(y_1,y_2)^\top$, $u=(u_1,u_2)^\top$.

We define
\begin{gather*} \fm:=\big\{h(0,v,u,y)\,|\, v\in\RR,\ u, y \in\RR^2\big\}\subset \fh^I\end{gather*}
and identify $\fgl(2,\RR)$ with $\{h(A,0,0,0)\,|\, A\in \fgl(2,\RR)\}$. Then $\fh^I=\fgl(2,\RR)\ltimes \fm$, where $A\in\fgl(2,\RR)$ acts on $\fm$ by
\begin{gather*}
A\cdot h(0,v,u,y)= h\big(0,\tr(A)v,Au,(A+\tr A)y\big).
\end{gather*}
The Lie bracket on $\fm$ is given by
\begin{gather*}
[h(0,v,u,y),h(0,\bar v,\bar u,\bar y)]=h\big(0,2\theta(u,\bar u),0,3(\bar v u-v\bar u)\big),
\end{gather*}
where $\theta(u,\bar u):=u_1\bar u_2-u_2\bar u_1$ for $u,\bar u\in\RR^2$. We define subspaces of $\fm$ by
\begin{gather*}
\fm(1,0,0):= \{h(0,v,0,0)\,|\, v\in\RR\},\qquad
\fm(0,1,0):= \big\{h\big(0,0,(u_1,0)^\top, 0\big)\,|\, u_1\in\RR\big\},\\
\fm(0,0,2):= \big\{h(0,0,0, y)\,|\, y\in\RR^2\big\}.
\end{gather*}
Now we put
\begin{gather*}\fm(i,j,2)= \fm(i,0,0)\oplus\fm(0,j,0)\oplus \fm(0,0,2)\end{gather*}
for $i,j\in\{0,1\}$.

\begin{Remark}
It is well known that the Lie algebra $\fg_2^*$ has a grading $\fg_2^*=g_{-3}\oplus g_{-2}\oplus g_{-1}\oplus g_0 \oplus g_{1}\oplus g_2\oplus g_3$. For details see, e.g., \cite[p.~14]{HS}, where the notation is very similar to ours. The parabolic subalgebra $\fp_1$ is equal to $g_0 \oplus g_{1}\oplus g_2\oplus g_3$ and $\fm$ equals $g_{1}\oplus g_2\oplus g_3$. Moreover, $\fm(1,0,0)=g_2$, $\fm(0,0,2)=g_3$ and $\fm(0,1,0)$ is a subspace of $g_1$.
\end{Remark}

We define matrices
\begin{gather*}C_a:=\left(\begin{matrix} a&-1\\1&a\end{matrix}\right),\qquad S:=\left(\begin{matrix} 1&1\\0&1\end{matrix}\right),\qquad N:=\left(\begin{matrix} 0&1\\0&0\end{matrix}\right),
\end{gather*}
and the following Lie subalgebras of $\fgl(2,\RR)\subset \fh^I$:
\begin{gather*}
\fd := \{\diag(a,d)\,|\, a,d\in\RR\},\qquad
\fco(2) := \left\{ \left(\begin{matrix} a&-b\\b&a\end{matrix}\right) \Big| \, a,b\in\RR\right\},\\
\hat\fb_2 := \Span\{ I,N\},\qquad
\fs_\lambda := \Span \{\diag(\lambda, \lambda-1),\ N\},\qquad \lambda\in\RR,\\
\fb_2 := \text{Lie algebra of upper triangular matrices}.
\end{gather*}

Let $\fa$ be the projection of $\fh$ to $\fgl(2,\RR)\subset \fh^I$.

\begin{Theorem}[{{\cite{FK}}}]\label{T1}
If $\fh\subset \fg_2^*$ is an indecomposable Berger algebra of Type I, then there exists a basis of $V$ such that we are in one of the following cases
\begin{enumerate}\itemsep=0pt
\item[$1)$] $\fa\in \big\{0,\fsl(2,\RR),\, \fgl(2,\RR),\, \fco(2),\,\fb_2,\, \hat \fb_2,\, \fd,\,\RR\cdot C_a,\, \RR\cdot S\big\}$ and $\fh=\fa\ltimes \fm$,
\item[$2)$] $\fa=\fs_\lambda=\Span \{X:=\diag(\lambda,\lambda-1),\, N\}$ and
\begin{enumerate}\itemsep=0pt
\item[$a)$] $\lambda\in\RR$ and $\fh=\fa\ltimes \fm$,
\item[$b)$] $\lambda=1$ and $\fh=\RR\cdot h\big(X,0,(0,1)^\top,0\big)\ltimes (\RR\cdot N\ltimes \fm(1,1,2))$,
\item[$c)$] $\lambda=2$ and $\fh=\Span\big\{X,\, h\big(N,0,(0,1)^\top,0\big) \big\}\ltimes\fm(i,j,2)$, where $(i,j)\in\{(0,0),(1,0)$, $(1,1)\}$,
\end{enumerate}
\item[$3)$] $\fa=\RR\cdot\diag(1,\mu)$ and
\begin{enumerate}\itemsep=0pt
\item[$a)$] $\mu\in [-1,1]$ and $\fh=\fa\ltimes \fm$,
\item[$b)$] $\mu=0$ and $\fh=\RR\cdot h\big(\diag(1,0),0,(0,1)^\top,0\big)\ltimes \fm(1,1,2)$,
\end{enumerate}
\item[$4)$] $\fa=\RR\cdot N$ and
\begin{enumerate}\itemsep=0pt
\item[$a)$] $\fh=\fa\ltimes \fm$,
\item[$b)$] $\fh=\RR\cdot h\big(N,0,(0,1)^\top,0\big)\ltimes \fm(1,j,2)$ for $j\in\{0,1\}$.
\end{enumerate}
\end{enumerate}
\end{Theorem}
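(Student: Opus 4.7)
The plan is to go through the list of Berger algebras in Theorem~\ref{T1} case by case. Following the strategy announced in the introduction, I split the list into two groups. The first group consists of the maximal algebra $\fh^I=\fp_1$ together with the ``minimal'' algebras 2b), 2c), 3b) and 4b) of Theorem~\ref{T1}, i.e., those $\fh$ which do not contain all of the nilradical $\fm$; for these I will run Cartan's exterior differential systems machinery to produce a normal form for a torsion-free $H$-structure. For the remaining algebras, all of which have the form $\fa\ltimes\fm$ (items 1), 2a), 3a), 4a)), I will construct and verify an explicit local metric; since these cases sit between the maximal and minimal ones, once the EDS analysis has been carried out for $\fh^I$, producing explicit examples is largely a matter of specialising that normal form.

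For the EDS cases, fix $\fh\subseteq\fh^I$ and consider an $\fh$-valued candidate for a Levi-Civita connection: choose a local coframe $e^1,\dots,e^7$ on $M^7$ so that the metric is given by~(\ref{Eip1}) and write the connection 1-form as $h(A,v,u,y)$, where $A$, $v$, $u$, $y$ are unknown functions valued in the subspace of $\fgl(2,\RR)\oplus\RR\oplus\RR^2\oplus\RR^2$ cut out by $\fh\subseteq\fh^I$. The first Cartan structure equations $de^i+\omega^i{}_j\wedge e^j=0$ then form an exterior differential system on the product of $M$ and the parameter space for $(A,v,u,y)$; after a suitable prolongation this becomes a Pfaffian system on a larger manifold $Y$. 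I verify involutivity by computing the Cartan characters $s_1,s_2,\dots$ together with the dimension of the space of ordinary integral 7-planes, and checking Cartan's inequality with equality. Cartan--K\"ahler then delivers real-analytic solutions, each of which is an adapted coframe for a torsion-free pseudo-Riemannian metric whose holonomy lies in $\fh$. Specialising to $\fh=\fh^I$ realises the parabolic $\fp_1$ itself.

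The last step, carried out separately for every algebra on the list, is to show that the holonomy is exactly $\fh$ and not a proper subalgebra. By the Ambrose--Singer theorem, the holonomy algebra at a generic point $x_0$ is spanned by the values of $R$ and its iterated covariant derivatives $\nabla^k R$ transported to $x_0$. The containment ``holonomy $\subseteq\fh$'' is automatic from the construction, since the connection form takes values in $\fh$. The nontrivial direction ``$\supseteq\fh$'' is checked by fixing generic values for the free functions in the normal form, respectively for the parameters of the explicit example, and computing enough components of $R$, $\nabla R$, $\nabla^2 R$, \dots\ to span $\fh$. I expect the principal difficulty to be twofold: first, verifying involutivity in Cartan's test for each of the small algebras separately, since the nilpotent directions enter asymmetrically in 2b), 2c), 3b), 4b) and the characters have to be recomputed in each case; second, choosing the free parameters in the normal form (or in the explicit example) sufficiently generically so that the span of $\{R,\nabla R,\nabla^2 R,\dots\}$ is all of $\fh$, since for the smaller algebras one must avoid the strata on which $\fh$ collapses to one of the other algebras on the list.
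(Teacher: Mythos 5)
Your proposal does not prove the statement in question. Theorem~\ref{T1} is the \emph{classification} of indecomposable Berger algebras of Type~I: it asserts that \emph{every} such Berger algebra is, up to a change of basis of $V$, one of the algebras on the list. What you have sketched is instead a proof of Theorem~\ref{T} (the realisation theorem), namely that each algebra \emph{on} the list occurs as the holonomy algebra of a local metric. These are converse implications. Constructing a metric for each listed $\fh$ via Cartan--K\"ahler theory and verifying, by Ambrose--Singer, that the curvature and its covariant derivatives span $\fh$, shows that the listed algebras are holonomy algebras (and hence Berger algebras), but it cannot show that the list is exhaustive: nothing in your argument rules out the existence of some further indecomposable Type~I Berger algebra that you never wrote down. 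Indeed, the paper cites Theorem~\ref{T1} from the earlier work [FK] and does not reprove it; the EDS machinery you describe is exactly the content of Section~3 of the present paper, which proves Theorem~\ref{T}.

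To actually prove Theorem~\ref{T1} one must work algebraically with the Berger criterion. A Berger algebra is by definition a subalgebra $\fh\subset\fso(4,3)$ that is spanned by the images $R(x,y)$ of its formal curvature tensors $R\in K(\fh)$, where $K(\fh)$ is the kernel of the map $S^2(\fh)\to\Lambda^4(\RR^7)$ recalled in the paper. The required steps are: (i)~show that an indecomposable Type~I subalgebra of $\fg_2^*$ is conjugate into the parabolic $\fp_1=\fgl(2,\RR)\ltimes\fm$ (this uses that the one-dimensional socle is isotropic and is stabilised by $\fh$); (ii)~enumerate, up to conjugation, the subalgebras of $\fp_1$, which is where the normal forms for the projection $\fa\subset\fgl(2,\RR)$ (the algebras $\fd$, $\fco(2)$, $\fb_2$, $\hat\fb_2$, $\fs_\lambda$, $\RR\cdot C_a$, $\RR\cdot S$, $\RR\cdot\diag(1,\mu)$, $\RR\cdot N$, etc.) and the possible nilpotent parts $\fm(i,j,2)$ arise; and (iii)~compute $K(\fh)$ for each candidate and discard those failing the spanning condition. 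None of this appears in your proposal, so as a proof of Theorem~\ref{T1} it has a fundamental gap; as a proof strategy for Theorem~\ref{T} it is essentially the paper's own approach.
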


Our aim is to show that all these Lie algebras are indeed holonomy algebras of a metric of signature $(4,3)$ (in particular, they are all Berger algebras). First we will concentrate on maximal and minimal examples in the following sense. We realise as a holonomy algebra the parabolic subalgebra $\fh^I\cong\frak p_1$ and all Berger algebras that do not contain all of $\fm$. In all these cases we give a kind of local normal form for a metric whose holonomy is contained in the considered Berger algebra $\fh$ and we give an example of a metric with holonomy equal to~$\fh$. For each of the remaining Berger algebras we will give only an example of a metric that shows that this Lie algebra is a holonomy algebra. In this way we prove Theorem~\ref{T}.

Before we start, we mention that for some of the listed Berger algebras already metrics are known. For instance, in \cite{FK}, we constructed left-invariant metrics on Lie groups realising $\fm$, $\RR\cdot N\ltimes \fm$ and $\fs_{1/2}\ltimes \fm$ as holonomy algebras. Furthermore, Willse constructed an ambient metric with holonomy $\fs_{1/2}\ltimes \fm$ (personal communication).

We recall that in general, given an $n$-dimensional pseudo-Riemannian manifold $(M, g)$ with holonomy $H$, the holonomy bundle of the metric $g$ is always a $1$-flat $H$-structure, i.e., there exists a compatible torsion-free connection (see for instance \cite{Bryant}). The structure equations on the $H$-structure $B \rightarrow M$ are given by
\begin{gather*}
d \eta = - \theta \wedge \eta
\end{gather*}
and
\begin{gather*}
d \theta = - \theta \wedge \theta +R (\eta \wedge \eta),
\end{gather*}
where $\eta\colon TB \rightarrow {\mathbb R}^n$, $\theta\colon TB \rightarrow \frak h$ and $R\colon B \rightarrow K (\frak h)$ is the curvature function with $\frak h$ the Lie algebra of $H$. Here $K (\frak h)$ denotes the $H$-representation
\begin{gather*}
0 \rightarrow K (\frak h) \rightarrow S^2 (\frak h) \overset{\wedge}{\rightarrow} \Lambda^4 \big(\mathbb R^n\big).
\end{gather*}
Using a local frame $(b_i)$ trivializing the $1$-flat $H$-structure it is possible to write down locally the structure equations for the 1-flat $H$-structure and reformulate the structure equations as a~set of differential equations for local functions on the manifold. Note that in terms of the local frame we have that $\theta$ is given by $\theta^i_j=b^i(\nabla{b_j})$, where $\nabla$ is the Levi-Civita connection and the 1-form $b^i(\nabla{b_j})$ is defined by $b^i(\nabla{b_j})(X):=b^i(\nabla_X{b_j})$ for all tangent vectors $X$.

In order to show that all the Berger algebras $\frak h$ of type I can be realized as holonomy algebras, we first obtain a local normal form for metrics with holonomy algebra contained in the maximal parabolic subalgebra $\frak p_1$. Let $P_1$ be the Lie group with Lie algebra $\frak p_1$. Since the existence of a 1-flat $P_1$-structure is equivalent to the induced pseudo-Riemannian metric having holonomy algebra contained in $\frak p_1$, we first write down locally the structure equations for a 1-flat $P_1$-structure trivialized by an adapted local frame $(b_i)$ and reformulate these structure equations as a set of differential equations for local functions on the manifold. Then we give a solution to the mentioned differential equations. By using Ambrose--Singer holonomy theorem~\cite{AS}, we compute enough components of the curvature tensor and its covariant derivative to conclude that the holonomy algebra is equal to $\frak p_1$. We use the same argument for all indecomposable Berger algebras~$\frak h$ of type I which do not contain all of~$\frak m$. For the remaining Berger algebras~$\frak h$ we only give a~solution of the above mentioned differential equations which shows that~$\fh$ is a~holonomy algebra.

\section{Local metrics with holonomy of Type I} \label{S2}
\subsection{Adapted coordinates}
In this subsection, we introduce a normal form for metrics whose holonomy algebra is contained in~$\fh^I\cong\fp_1$. We will use this normal form in the next subsection to prove that $\fh^I$ is indeed a~holonomy algebra. Let $b_1,\dots,b_7$ be a local section in the reduction of the frame bundle of~$M^{4,3}$ to the holonomy group. With respect to $b_1,\dots,b_7$ the 3-form defining the $\G$-structure equals
\begin{gather*}
\omega=\sqrt 2\big(b^{167}+b^{235}\big)-b^4\wedge \big(b^{15}-b^{26}-b^{37}\big)
\end{gather*}
and the metric is given by
\begin{gather}
g=2\big(b^1\cdot b^5+b^2\cdot b^6+b^3\cdot b^7\big)- \big(b^4\big)^2. \label{Eip}
\end{gather}
Let $\fh^I$ be defined as in equation~(\ref{EhI}). The dual frame $b^1,\dots,b^7$ satisfies the structure equations
\begin{gather}\label{Ese}
\left(\begin{matrix} db^1\\db^2\\db^3\\db^4\\db^5\\db^6\\db^7\end{matrix}\right) = -\left(
\begin{matrix}
\tr \bA &-\bu_2&\bu_1&\sqrt 2 \bv&0&-\by_1&-\by_2\\
0&\ba_1&\ba_2&\sqrt 2 \bu_1&\by_1&0&\bv\\
0&\ba_3&\ba_4&\sqrt 2 \bu_2&\by_2&-\bv&0\\
0&0&0&0&\sqrt 2 \bv&\sqrt 2 \bu_1&\sqrt 2 \bu_2\\
0&0&0&0&-\tr \bA&0&0\\
0&0&0&0&\bu_2&-\ba_1&-\ba_3\\
0&0&0&0&-\bu_1&-\ba_2&-\ba_4
	\end{matrix}\right) \wedge \left(\begin{matrix} b^1\\b^2\\b^3\\b^4\\b^5\\b^6\\b^7\end{matrix}\right)
\end{gather}	
for	$ \bA:=\left(\begin{smallmatrix} \ba_1&\ba_2\\ \ba_3&\ba_4\end{smallmatrix}\right)$, where bold symbols denote 1-forms.

\begin{Definition}Let $x_1,\dots, x_7$ be (local) coordinates on $M^{4,3}$. We introduce the notation \begin{gather*}dx_{(i_1,\dots,i_k)}:=dx_{i_1}\wedge \dots \wedge dx_{i_k}.\end{gather*}
We denote by $I(\omega^1,\dots, \omega^k)$ the algebraic ideal generated by the differential forms $\omega^1,\dots, \omega^k$ and define \begin{gather*}I_0:=I(dx_5,dx_6,dx_7),\qquad I_1:=I\big(dx_{(5,6)}, dx_{(6,7)}, dx_{(5,7)}\big).\end{gather*}
\end{Definition}

\begin{Proposition}\label{Lg}The holonomy of $\big(M^{4,3},g\big)$ is contained in $\fh^I$ if and only if there are local coordinates $x_1,\dots,x_7$ such that $g=2\big(b^1\cdot b^5+b^2\cdot b^6+b^3\cdot b^7\big)- \big(b^4\big)^2$ for
\begin{gather}
b^1 = dx_1+r_5(x_1,\dots,x_7)dx_5+r_6(x_1,\dots,x_7)dx_6+r_7(x_1,\dots,x_7)dx_7,\label{beg}\\
b^2 =dx_2+ q_2(x_2,\dots,x_7)dx_6+s_2(x_2,\dots,x_7)dx_7,\\
b^3 =dx_3+ q_3(x_2,\dots,x_7)dx_6+s_3(x_2,\dots,x_7)dx_7,\\
b^4 =dx_4+ q_4(x_5,x_6,x_7)dx_6,\\
b^5 =f(x_5,x_6,x_7)dx_5,\\
b^j =dx_j,\qquad j=6,7, \label{end}
\end{gather}
 where the functions $r_5$, $r_6$, $r_7$, $q_2$, $q_3$, $q_4$, $s_2$, $s_3$ and $f$ satisfy the differential equations
\begin{gather}
(s_2-q_3)_{x_i}=0,\qquad i\in\{2,3\},\qquad (s_2-q_3)_{x_4}=-(q_4)_{x_7},\label{Eb1}\\
(r_6)_{x_1}=f_{x_6}/f=(q_2)_{x_2}+(q_3)_{x_3}, \qquad (r_7)_{x_1}=f_{x_7}/f=(q_3)_{x_2}+(s_3)_{x_3},\label{Eb2}\\
(r_6)_{x_2}=-\sqrt{2} (q_3)_{x_4},\qquad \, (r_7)_{x_2}=-\sqrt{2} (s_3)_{x_4},\label{Eb3}\\
(r_6)_{x_3}=\sqrt{2} (q_2)_{x_4},\qquad (r_7)_{x_3}=\sqrt{2} ((q_3)_{x_4}-(q_4)_{x_7}),\label{Eb4}\\
(r_6)_{x_4}= -2\sqrt{2}\left((s_2)_{x_6}-(q_2)_{x_7}-\sum_{j=2}^4(s_2)_{x_j}q_j+\sum_{j=2}^3(q_2)_{x_j}s_j\right)-(q_4)_{x_5}/f ,\label{Eb4a}\\
(r_7)_{x_4}= -2\sqrt{2}\left((s_3)_{x_6}-(q_3)_{x_7}-\sum_{j=2}^4(s_3)_{x_j}q_j+\sum_{j=2}^3(q_3)_{x_j}s_j\right) ,\label{Eb5}\\
(r_5)_{x_1}=- \tfrac f{\sqrt 2} (q_4)_{x_7}, \label{Eb6}\\
(r_5)_{x_2}=- \tfrac f{2\sqrt 2} (r_7)_{x_4}, \label{Eb7}\\
(r_5)_{x_3}=- \tfrac1{2\sqrt 2} (q_4)_{x_5}+ \tfrac f{2\sqrt 2} (r_6)_{x_4}, \label{Eb7a}\\
(r_5)_{x_4}= \tfrac f{\sqrt 2} \left( (r_6)_{x_7}-(r_7)_{x_6}+(r_7)_{x_1}r_6-(r_6)_{x_1}r_7+\sum_{j=2}^4 (r_7)_{x_j}q_j-\sum_{j=2}^3 (r_6)_{x_j}s_j\right) \nonumber\\
\hphantom{(r_5)_{x_4}=}{} + \tfrac 1{\sqrt 2} ((q_3)_{x_5}-(s_2)_{x_5}).\label{Eb10}
\end{gather}
\end{Proposition}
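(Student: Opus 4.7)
The proposition is an if-and-only-if statement, so I would split the argument into two directions.

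\textbf{Sufficiency.} Assume the coframe has the prescribed form (\ref{beg})--(\ref{end}) and that the functions $r_5, r_6, r_7, q_2, q_3, q_4, s_2, s_3, f$ satisfy (\ref{Eb1})--(\ref{Eb10}). I would compute $db^i$ for $i=1,\dots,7$ directly in the coordinates using the given explicit expressions, and then solve for 1-forms $\tr\bA$, $\ba_j$, $\bu_j$, $\bv$, $\by_j$ so that the structure equations (\ref{Ese}) hold. Since the coefficient matrix on the right-hand side of (\ref{Ese}) automatically lies in $\fh^I$, such a solution exhibits the Levi-Civita connection as taking values in $\fh^I$, hence $\mathrm{Hol}(g)\subset \fh^I$. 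The PDEs (\ref{Eb1})--(\ref{Eb10}) are exactly the compatibility conditions needed so that each component 1-form of the connection (e.g., $\bu_1$ or $\tr\bA$), which appears in several rows of (\ref{Ese}), can be read off consistently from all those rows.

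\textbf{Necessity.} Assume $\mathrm{Hol}(g)\subset\fh^I\cong\fp_1$. Pick a local section of the $P_1$-reduction of the frame bundle to obtain a coframe $(b^i)$ satisfying (\ref{Ese}) for some $\fh^I$-valued connection 1-form. The block upper-triangular shape of the matrix in (\ref{Ese}), reflecting the parabolic grading $\fp_1=g_0\oplus g_1\oplus g_2\oplus g_3$, forces the nested family of Pfaffian systems
\begin{gather*}
I\big(b^5\big)\subset I\big(b^5,b^6,b^7\big)\subset I\big(b^4,b^5,b^6,b^7\big)\subset I\big(b^2,b^3,b^4,b^5,b^6,b^7\big)
\end{gather*}
to be Frobenius-integrable, as one verifies by inspecting the last few rows of (\ref{Ese}). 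Applying Frobenius inductively to this flag gives coordinates $x_5,x_6,x_7$ adapted to the first two foliations (so $b^5=f\,dx_5$, and $b^5,b^6,b^7$ are linear combinations of $dx_5,dx_6,dx_7$), then $x_4$ adapted to the third, and finally $x_1,x_2,x_3$ transverse to the fourth. The residual $P_1$-gauge freedom on the coframe (by elements of the graded pieces $g_0,g_1,g_2,g_3$) is then used to normalize $b^6,b^7$ to $dx_6,dx_7$ and to kill additional terms in $b^1,\dots,b^4$, producing the specific dependence pattern (\ref{beg})--(\ref{end}).

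With the coframe in normal form, I substitute into (\ref{Ese}) and expand each $db^i$ in coordinates. Matching coefficients of the 2-forms $dx_i\wedge dx_j$ gives explicit expressions for the components $\tr\bA$, $\ba_j$, $\bu_j$, $\bv$, $\by_j$ in terms of the partial derivatives of the nine unknown functions. Since each component 1-form appears in several matrix entries, the requirement that all these readings agree produces exactly the system (\ref{Eb1})--(\ref{Eb10}).

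The main obstacle I expect is the gauge-normalization step: arguing carefully that the residual $P_1$-gauge freedom is \emph{exactly} enough to bring the coframe to the prescribed form with the very specific coordinate dependences (e.g., $q_4$ depending only on $x_5,x_6,x_7$, the functions $q_2,q_3,s_2,s_3$ independent of $x_1$, and $b^5$ proportional to $dx_5$ alone). The natural strategy is to let each graded piece $g_i$ of $\fp_1$ kill terms at the corresponding depth of the foliation filtration, then verify that no further simplification is possible. The subsequent derivation of (\ref{Eb1})--(\ref{Eb10}) from the structure equations is lengthy but routine.
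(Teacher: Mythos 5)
Your proposal is correct and follows essentially the same route as the paper: integrate the flag of Pfaffian systems forced by the parabolic shape of the structure equations~(\ref{Ese}), use the residual gauge freedom (pointwise $\GL(2,\RR)$ and $\exp h(0,v,u,y)$ transformations) together with the structure equations to pin down the normal form~(\ref{beg})--(\ref{end}), and then translate the condition that the connection form be $\fh^I$-valued into the system~(\ref{Eb1})--(\ref{Eb10}). The only cosmetic difference is that the paper computes the Levi-Civita connection components $b^i(\nabla b_j)$ via the Koszul formula and imposes the linear relations characterizing $\fh^I$, whereas you propose to read the connection 1-forms off the first structure equation and impose consistency across the rows in which each component appears --- an equivalent computation, since the torsion-free metric connection is unique.
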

\begin{proof}First assume that the holonomy is contained in $\fh^I$. Let $b_1,\dots,b_7$ be a local section in the reduction of the frame bundle to the holonomy group. The structure equations imply that we can introduce coordinates such that $b^5=fdx_5$ and $b^6,b^7\in I_0$. We transform the local frame pointwise (in a smooth way) by a suitable element of \begin{gather*}\GL(2,\RR)\cong \big\{\diag\big(\det a, a,1, (\det a)^{-1}, \big(a^\top\big)^{-1}\big)\,|\, a\in\GL(2,\RR)\big\}\end{gather*} and, furthermore, by $\exp h(0,0,u,0)$ for a suitable local map $u\colon M^{4,3} \rightarrow \RR^2$ such that
\begin{gather*}b^6=dx_6,\qquad b^7=dx_7\end{gather*}
(possibly changing $f$). Then
\begin{gather*}
0 =ddx_6= -\bu_2\wedge fdx_5+\ba_1\wedge dx_6 +\ba_3\wedge dx_7,\\
0 =ddx_7= \bu_1\wedge fdx_5+\ba_2\wedge dx_6 +\ba_4\wedge dx_7
\end{gather*}
implies $\ba_1,\dots,\ba_4,\bu_1, \bu_2\in I_0$. Now
\begin{gather*}db^5=df\wedge dx_5=(\tr \bA)\wedge b^5= (\ba_1+\ba_4)\wedge fdx_5\end{gather*}
implies $f=f(x_5,x_6,x_7)$.
Furthermore, $db^4\in I(dx_5,dx_6)$. Hence $b_4=dx_4+\hat qdx_5+q_4 dx_6$. Transforming the local frame by $\exp h(0,v,0,0)$ for a suitable local function $v$ we obtain $\hat q=0$. Thus
\begin{gather*}b^4=dx_4+q_4 dx_6.\end{gather*}
Since
\begin{gather*} db^4=dq_4\wedge dx_6 =-\sqrt 2\big( \bv\wedge b^5+ \bu_1\wedge b^6 +\bu_2\wedge b^7\big)\in I(\bv\wedge dx_5)+ I_1,\end{gather*}
we have $q_4=q_4(x_5,x_6,x_7)$ and $\bv\in I_0$. Now we see from $\tr A, \bu_1, \bu_2, \bv, b^6, b^7 \in I_0$ that
\begin{gather*}db^1=-(\tr \bA)\wedge b^1 +\bu_2\wedge b^2-\bu_1\wedge b^3-\sqrt 2 \bv\wedge b^4+\by_1\wedge b^6 +\by_2\wedge b^7\end{gather*}
is in $I_0$. Thus $b^1=dx_1+ r_5 dx_5+ r_6 dx_6+r_7dx_7$. Similarly, we proceed with $b^2$ and $b^3$ and obtain
\begin{gather*}b^2=dx_2+\hat q_2dx_5+q_2 dx_6+s_2dx_7, \qquad b^3=dx_3+\hat q_3 dx_5 +q_3 dx_6+s_3dx_7.\end{gather*}
Transforming the local frame by $\exp h(0,0,0,y)$ for suitable local function $y\colon M^{4,3}\to \RR^2$ we may assume that $\hat q_{2}=\hat q_{3}=0$. By (\ref{Ese}), the functions $q_2$, $q_3$, $s_2$, $s_3$ do not depend on~$x_1$. This shows that under the assumption that the holonomy is in $\fh^I$, we find coordinates $x_1,\dots,x_7$ such that equations \eqref{Eip} and \eqref{beg}--\eqref{end} hold.

Now assume that we have any local metric $g$ given by \eqref{Eip} with respect to a local frame $b_1,\dots,b_7$. Denote the holonomy group of $g$ by $H$ and assume that $b_1,\dots,b_7$ is a section in the reduction of the frame bundle to $H$. Assume further that \eqref{beg}--\eqref{end} hold. Since the connection reduces to the holonomy reduction, the matrix-valued 1-form $\theta=(\theta^i_j)=(b^i(\nabla b_j))$ takes values in the Lie algebra $\fh$ of $H$. Hence $\fh $ is contained in~$\fh^I$ if and only if $\theta$ takes values in $\fh^I$. Thus, by~\eqref{EhI}, the holonomy of~$g$ is contained in $\fh^I$ if and only if
\begin{gather}
 b^i(\nabla b_{1})=0,\qquad i=2,3, \label{Es1}\\
 b^1(\nabla b_1)=b^2(\nabla b_2)+b^3(\nabla b_3), \label{Es2}\\
 b^2(\nabla b_4)=\sqrt 2 b^1(\nabla b_3),\qquad b^3(\nabla b_4)=-\sqrt 2 b^1(\nabla b_2), \label{Es3}\\
 b^1(\nabla b_4)=\sqrt 2b^2(\nabla b_7). \label{Es4}
\end{gather}
Note that there are six more equations which have to be satisfied, but these are automatically fulfilled with the ansatz of the adapted coframe before. Using the Koszul formula, we find
\begin{gather*}
b^i(\nabla b_{1}) = \tfrac12 ((r_{4+i})_{x_1}- f_{x_{4+i}}/{f}) b^5 ,\qquad i=2,3,\label{En1}\\
b^1(\nabla b_1) = \tfrac{(r_5)_{x_1}}{f} b^5 +\left(\tfrac{f_{x_6}}{2f}+\tfrac{(r_6)_{x_1}}{2}\right) b^6+\left(\tfrac{f_{x_7}}{2f}+\tfrac{(r_7)_{x_1}}{2}\right) b^7, \label{En2}\\
b^{1}(\nabla b_{i}) = \tfrac{(r_5)_{x_i}}f b^5 + \tfrac{ (r_6)_{x_i}}2 b^6 + \tfrac{(r_7)_{x_i}}2 b^7,\qquad i=2,3,\label{En2a}\\
b^{2}(\nabla b_{2}) = \tfrac12 (r_6)_{x_2} b^5+(q_2)_{x_2} b^6+\tfrac12\big((s_2)_{x_2}+(q_3)_{x_2}\big) b^7, \label{En3}\\
b^3(\nabla b_3) = \tfrac12 (r_7)_{x_3} b^5+\tfrac12\big((s_2)_{x_3}+(q_3)_{x_3}\big) b^6+(s_3)_{x_3} b^7 , \label{En4}\\
b^1(\nabla b_4) = \tfrac{(r_5)_{x_4}}{f} b^5 +\left(\tfrac{(q_4)_{x_5}}{2f}+\tfrac{(r_6)_{x_4}}{2}\right) b^6+ \tfrac{(r_7)_{x_4}}{2} b^7 , \label{En5} \\
b^2(\nabla b_4) = \left(-\tfrac{(q_4)_{x_5}}{2f}+\tfrac{(r_6)_{x_4}}{2}\right)b^5+ (q_2)_{x_4} b^6 + \tfrac12 \big({-}(q_4)_{x_7}+(s_2)_{x_4}+(q_3)_{x_4}\big) b^7 ,\label{En6}\\
b^3(\nabla b_4) = \tfrac12 (r_7)_{x_4} b^5+\tfrac12\big((q_4)_{x_7}+(s_2)_{x_4}+(q_3)_{x_4}\big) b^6+(s_3)_{x_4} b^7 , \label{En7}\\
b^2(\nabla b_7) = -\tfrac12 \sum_{i=2}^3(s_2-q_3)_{x_i} b^i -\tfrac12 \big((s_2-q_3)_{x_4}+(q_4)_{x_7}\big) b^4 \nonumber \\
\hphantom{b^2(\nabla b_7) =}{} +\tfrac12 \left( (r_6)_{x_7}-(r_7)_{x_6} +(r_7)_{x_1}r_6-(r_6)_{x_1}r_7+\sum_{j=2}^4 (r_7)_{x_j}q_j \right. \nonumber\\
 \left. \hphantom{b^2(\nabla b_7) =}{} -\sum_{j=2}^3 (r_6)_{x_j}s_j -(s_2)_{x_5}/f+(q_3)_{x_5}/f\right) b^5 \label{En8} \\
\hphantom{b^2(\nabla b_7) =}{} -\left((s_2)_{x_6}-(q_2)_{x_7}-\sum_{j=2}^4(s_2)_{x_j}q_j+\sum_{j=2}^3(q_2)_{x_j}s_j\right) b^6 \nonumber\\
\hphantom{b^2(\nabla b_7) =}{} -\left( (s_3)_{x_6}-(q_3)_{x_7}-\sum_{j=2}^4(s_3)_{x_j}q_j+\sum_{j=2}^3(q_3)_{x_j}s_j \right) b^7 .\nonumber
\end{gather*}
Hence, (\ref{Es4}) is equivalent to (\ref{Eb1}), (\ref{Eb4a}), (\ref{Eb5}) and (\ref{Eb10}). Furthermore, (\ref{Es3}) is equivalent to (\ref{Eb7}), (\ref{Eb7a}) and
\begin{gather}
 -\sqrt{2} (r_6)_{x_2}= (q_4)_{x_7}+(s_2)_{x_4}+(q_3)_{x_4},\qquad (r_7)_{x_2}=-\sqrt{2} (s_3)_{x_4},\label{Eb3p}\\
 (r_6)_{x_3}=\sqrt{2} (q_2)_{x_4},\qquad \sqrt{2}(r_7)_{x_3}= -(q_4)_{x_7}+(s_2)_{x_4}+(q_3)_{x_4}.\label{Eb4p}
\end{gather}
Moreover, (\ref{Es2}) is equivalent to
\begin{gather}
(r_5)_{x_1}/f = \tfrac12 \big( (r_6)_{x_2}+(r_7)_{x_3}\big),\label{Ebeg}\\
f_{x_6}/f+(r_6)_{x_1}=2(q_2)_{x_2}+(s_2)_{x_3}+(q_3)_{x_3},\\
f_{x_7}/f+(r_7)_{x_1}=2(s_3)_{x_3}+(s_2)_{x_2}+(q_3)_{x_2},\label{Eend}
\end{gather}
and (\ref{Es1}) to
\begin{gather}\label{Eb1p}
(r_6)_{x_1}=f_{x_6}/f, \qquad (r_7)_{x_1}=f_{x_7}/f.
\end{gather}
Using equation~(\ref{Eb1}), we see that (\ref{Eb3p})--(\ref{Eb1p}) is equivalent to (\ref{Eb2}), (\ref{Eb3}), (\ref{Eb4}) \linebreak and~(\ref{Eb6}).
 \end{proof}

 \subsection{Realisation of the maximal parabolic subgroup as a holonomy group}\label{S3.2}
 It is easy to check that the functions
 \begin{gather*}
 u(x_5,x_6,x_7)=x_7,\qquad f=e^u,\qquad q_2(x_2,\dots,x_7) = -\sqrt 2x_4,\\
 q_3(x_2,\dots,x_7)= \sqrt 2\big(\tfrac12 x_5-x_6\big)+ \big( {-}x_4 +x_5+\tfrac 1{\sqrt 2}x_7 \big)x_6e^{-x_7},\\
 q_4(x_5,x_6,x_7)=x_5-2x_6+x_6e^{-x_7},\\
 s_2(x_2,\dots,x_7) =0,\qquad
 s_3(x_2,\dots,x_7) =-\tfrac1{\sqrt 2}x_4 +x_3 ,\\
 r_5(x_1,\dots,x_7) =\tfrac1{\sqrt 2} x_4(x_6-1)e^{-x_7} +x_7 + \tfrac1{\sqrt 2}(x_1 x_6 -x_3) -x_4^2 e^{x_7} ,\\
 r_6(x_1,\dots,x_7) =\big( {-}x_4+x_5+\sqrt2 x_6 -\tfrac1{\sqrt 2} x_7+\sqrt2 x_2 x_6 \big) e^{-x_7} + e^{-2x_7} -2x_3 ,\\
 r_7(x_1,\dots,x_7) = x_1+x_2
 \end{gather*}
 satisfy (\ref{Eb1})--(\ref{Eb10}). Hence the metric defined by (\ref{Eip}), where $b_1,\dots, b_7$ are given as in Proposition~\ref{Lg}, has holonomy $\fh\subset \fh_I$.

We want to show that $\fh$ is equal to $\fh^I$. More exactly, we will show that $R_{56}$, $R_{57}$ and $(\nabla _{b_5}R)_{56}$ generate $\fh^I$ as a Lie algebra. Since by the Ambrose--Singer theorem these operators are in $\fh\subset \fh^I$, this will prove the assertion. We will see that it is not necessary to know all components of these operators. For entries that are not needed, we just write a `$*$' (even if these entries are not complicated). We compute
\begin{gather*}
R_{56}= h\big(A_{56},*,\big( 0,-\tfrac 12\big)^\top\!,* \big), \qquad\!\!
R_{57}= h\big(A_{57},*,\big( \tfrac12,0)^\top\!,* \big) , \qquad\!\!
(\nabla _{b_5}R)_{56}= h(A'_{56},*,*,* ),
\end{gather*}
at $x=0$, where $ A_{56}=\left( \begin{smallmatrix} -\frac1{\sqrt 2}&0\\0&0\end{smallmatrix}\right)$, $ A_{57}=\left( \begin{smallmatrix}0&0\\-1&0\end{smallmatrix}\right)$, $ A'_{56}=\left( \begin{smallmatrix}0& -\frac1{\sqrt 2}\\1-\frac1{2\sqrt 2}&0\end{smallmatrix}\right)$. In particular, we get $\fa=\fgl(2,\RR)$. Since we do not know apriori that~$\fh$ is indecomposable, this is not sufficient to prove $\fh=\fh^I$. To finish the proof, we will show that \begin{gather*}\fu:= \big\{u\in\RR^2 \,|\, \exists\, v\in\RR,\,\exists\, y\in\RR^2\colon\, h(0,v,u,y)\in \fh\big\}\end{gather*} is equal to $\RR^2$, which implies that $\fh$ contains~$\fm$. Since $R_{56}$ and $R_{57}$ are in $\fh$, also
\begin{gather*}[R_{56},R_{57}]= \tfrac 1{\sqrt{2} }h\big(A_{57},*,\big({-} \tfrac12,0\big)^\top,*\big) \end{gather*}
is in $\fh$, thus $(1,0)^\top$ is in $\fu$. Hence $\fu=\RR^2$ because of the $\fa$-invariance of $\fu$.

\subsection{Type I 2(b)}
We consider the Lie algebra $\fh$ spanned by $h\big(\diag(1,0),0,(0,1)^\top,0\big)$, $N$ and $\fm(1,1,2)$.
The structure equations are now
\begin{gather}\label{EI2(b)}\left(\begin{matrix} db^1\\db^2\\db^3\\db^4\\db^5\\db^6\\db^7\end{matrix}\right) = -\left(
\begin{matrix}
{\bx} &-\bx&\bu_1&\sqrt2 \bv&0&-\by_1&-\by_2\\
0&\bx&\bn&\sqrt2 \bu_1&\by_1&0&\bv\\
0&0&0&\sqrt 2 \bx&\by_2&-\bv&0\\
0&0&0&0&\sqrt2\bv&\sqrt2 \bu_1&\sqrt 2 \bx\\
0&0&0&0&-\bx&0&0\\
0&0&0&0&\bx&-\bx&0\\
0&0&0&0&-\bu_1&-\bn&0
\end{matrix}\right) \wedge \left(\begin{matrix} b^1\\b^2\\b^3\\b^4\\b^5\\b^6\\b^7\end{matrix}\right).
\end{gather}
\begin{Proposition} \label{prop2.2} The holonomy of $\big(M^{4,3},g\big)$ is contained in the Lie algebra $\fh$ of Type~I~$2(b)$ if and only if there are local coordinates $x_1,\dots,x_7$ such that $g=2\big(b^1\cdot b^5+b^2\cdot b^6+b^3\cdot b^7\big)- \big(b^4\big)^2$ for
\begin{gather}
b^1 = dx_1+r_5(x_1,\dots,x_7)dx_5+r_6(x_2,\dots,x_7)dx_6+r_7(x_4,\dots,x_7)dx_7, \nonumber \\
b^2 =dx_2+ q_2(x_3,\dots,x_7)dx_6, \qquad b^3 =dx_3+ q_3(x_5,x_6,x_7)dx_6, \label{Eb}\\
b^6 =dx_6+p(x_5,x_6)dx_5, \qquad b^j =dx_j, \qquad j=4,5,7, \nonumber
\end{gather}
where the functions $q_3$, $r_5$, $r_6$, $r_7$ are of the form
\begin{gather*}
q_3= -p_{x_6}x_7+\bar q_3(x_5,x_6),\qquad
r_5= -p_{x_6} x_1 +p_{x_6}(1-p)x_2+\bar r_5(x_3,\dots,x_7),\\
r_6= -p_{x_6}x_2 +\bar r_6(x_3,\dots, x_7),\qquad
r_7= -2\sqrt2 p_{x_6}x_4 +\bar r_7(x_5,x_6,x_7).
\end{gather*}
and $p$, $q_2$, $q_3$, $\bar r_5$, $\bar r_6$, $\bar r_7$ satisfy the differential equations
\begin{gather}
(\bar r_6)_{x_3}= (q_2)_{x_3}p+ {\sqrt2}(q_2)_{x_4}, \label{Esysta}\\
(\bar r_6)_{x_4}=(q_2)_{x_4}p+2\sqrt 2(q_2)_{x_7},\label{Esyst2}\\
(\bar r_5)_{x_3} =\big((q_2)_{x_3}p+ {\sqrt2}(q_2)_{x_4}\big)p+ (q_2)_{x_7}, \label{Esyst3}\\
(\bar r_5)_{x_4}= \tfrac 1{\sqrt2} \big((\bar r_6)_{x_7} - (\bar r_7)_{x_6} +3 (q_2)_{x_7}p +(q_3)_{x_5}\big) +2p_{x_6 x_6} x_4 +(q_2)_{x_4} p^2.\label{Esyste}
\end{gather}
\end{Proposition}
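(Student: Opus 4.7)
The plan is to mirror the proof of Proposition~\ref{Lg} with the more restrictive Lie algebra $\fh$ in place of $\fh^I$. Reading off the structure equations \eqref{EI2(b)}, the constraints on the connection 1-forms that distinguish $\fh$ from $\fh^I$ are $\ba_1=\tr\bA=\bu_2=\bx$, $\ba_3=\ba_4=0$, and $\ba_2=\bn$, while $\bu_1$, $\bv$, $\by_1$, $\by_2$ remain independent. The structure group of the $\fh$-reduction is correspondingly smaller, so the available normalisations are more limited, which is why one ends up with the slightly richer coframe shape $b^6=dx_6+p(x_5,x_6)\,dx_5$ (rather than $b^6=dx_6$) in \eqref{Eb}, together with the simplifications $b^5=dx_5$ and $b^4=dx_4$.

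First I would adapt the coframe step by step: using the group action together with the equations $db^5=\bx\wedge b^5$ and $db^7=\bu_1\wedge b^5+\bn\wedge b^6$ extracted from \eqref{EI2(b)}, one arranges $b^5=dx_5$ and $b^7=dx_7$ and deduces $\bu_1,\bn\in I_0$. The equation $db^6=-\bx\wedge b^5+\bx\wedge b^6$, combined with the fact that $\bx$ is proportional to $dx_5$ modulo forms that annihilate $b^5\wedge b^5$, then leads to $b^6=dx_6+p(x_5,x_6)\,dx_5$ with $p$ independent of $x_7$, and identifies the leading component of $\bx$ as $-p_{x_6}\,dx_5$. This $-p_{x_6}$ factor is the source of the linear terms $-p_{x_6}x_7$, $-p_{x_6}x_1$, $-p_{x_6}x_2$, $-2\sqrt 2\,p_{x_6}x_4$ appearing in $q_3$, $r_5$, $r_6$, $r_7$; they arise by integrating the analogous equations for $db^1$, $db^3$, $db^4$ once the specific form of $\bx$ is fixed. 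Similarly, the vanishing $\ba_3=\ba_4=0$ forces $b^3$ into the restricted form $dx_3+q_3(x_5,x_6,x_7)\,dx_6$ with no $x_1,x_2,x_4$ dependence and no $dx_7$ component, and forces $b^4=dx_4$.

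For the converse direction one substitutes the ansatz \eqref{Eb} into the Koszul formula to compute the connection forms $\theta^i_j=b^i(\nabla b_j)$ and requires that the resulting matrix take values in $\fh$. Membership in the larger algebra $\fh^I$ is equivalent to equations \eqref{Eb1}--\eqref{Eb10} by Proposition~\ref{Lg}, and most of these are trivially satisfied by the restrictive form of \eqref{Eb}; the additional requirements $\ba_3=\ba_4=0$ and $\bu_2=\ba_1$ produce further scalar identities that likewise collapse under the ansatz. What remains are precisely the four PDEs \eqref{Esysta}--\eqref{Esyste}.

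The principal obstacle is organisational rather than conceptual: one has to keep careful track of which of the many Koszul-formula conditions reduce to trivialities once the stronger ansatz \eqref{Eb} is imposed, and which yield genuine PDEs. Recycling the expressions for $b^i(\nabla b_j)$ already derived in the proof of Proposition~\ref{Lg} and substituting the more restrictive coefficient functions makes this verification a long but essentially mechanical calculation.
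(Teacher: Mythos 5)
Your overall strategy coincides with the paper's: integrate the structure equations \eqref{EI2(b)} step by step, using frame transformations by elements of the group of $\fh$ to normalise $b^5,b^6,\dots,b^1$ in turn, and then translate the requirement that $\theta^i_j=b^i(\nabla b_j)$ take values in $\fh$ into PDEs via the Koszul formula. Your identification of the constraints distinguishing $\fh$ from $\fh^I$ and of $\bx=-p_{x_6}\,dx_5$ as the source of the linear terms in $q_3$, $r_5$, $r_6$, $r_7$ is correct. (One small imprecision: from $0=db^7=\bu_1\wedge b^5+\bn\wedge b^6$ one needs the sharper conclusion $\bu_1,\bn\in I(dx_5,dx_6)$, not merely $\bu_1,\bn\in I_0$; otherwise $db^4\in I(dx_5)$ does not follow and one cannot normalise to $b^4=dx_4$.)

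There is, however, one concrete step in your converse direction that would fail as stated: you propose to test membership in $\fh^I$ by means of equations \eqref{Eb1}--\eqref{Eb10} and to recycle the expressions for $b^i(\nabla b_j)$ derived in the proof of Proposition~\ref{Lg}. Those equations and expressions are tied to the normal form \eqref{beg}--\eqref{end}, obtained with the normalisations $b^5=f\,dx_5$, $b^6=dx_6$, $b^7=dx_7$. The normal form \eqref{Eb} of the present proposition uses different normalisations, namely $b^5=dx_5$ and $b^6=dx_6+p(x_5,x_6)\,dx_5$; for $p\neq0$ it is not a special case of \eqref{beg}--\eqref{end}, the function $p$ does not occur in \eqref{Eb1}--\eqref{Eb10} at all, and the metric coefficients (hence the Christoffel symbols) are different. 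The Koszul computation must therefore be redone for the new ansatz; this is what produces the $p$-dependent corrections such as $b^2(\nabla b_2)=\tfrac12\big((r_6)_{x_2}-p_{x_6}\big)b^5$ and $b^1(\nabla b_2)=\big((r_5)_{x_2}-(r_6)_{x_2}p\big)b^5+\tfrac12\big(p_{x_6}+(r_6)_{x_2}\big)b^6$, which cannot be obtained by substituting more restrictive coefficient functions into the formulas of Proposition~\ref{Lg}. The paper carries out this computation directly and reads off the system equivalent to \eqref{Esysta}--\eqref{Esyste}; with that replacement your argument goes through.
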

\begin{proof} The structure equations imply $b^5=fdx_5$ for some function $f$. Transforming the local frame by $\exp h\big(\diag(x,0),0,(0,x)^\top,0\big)$ for a suitable local function $x$ we may assume \smash{$b^5=dx_5$}. Hence $\bx\in I(dx_5)$, which implies $db^6\in I(dx_5)$. Thus we can introduce $x_6$ such that $b^6=dx_6 +p dx_5$. Hence $p=p(x_5,x_6)$. Furthermore, (\ref{EI2(b)}) shows $db^7\in I(dx_5, dx_6)$. Hence we can introduce~$x_7$ such that $b^7=dx_7+f_5 dx_5+f_6 dx_6$. Transforming the local frame by $\exp h\big(n\cdot N,0,(u_1,0)^\top,0\big)$ for suitable local functions $u_1$, $n$ we may assume $b^7=dx_7$. By~(\ref{EI2(b)}), we obtain
\begin{gather*}0=db^7=\bu_1\wedge dx_5 +\bn \wedge (dx_6+p dx_5),\end{gather*}
which implies $\bn, \bu_1\in I(dx_5,dx_6)$. Consequently, $db^4\in I(dx_5)$. Hence we can introduce $x_4$ such that $b^4=dx_4+f_4 dx_5$. Transforming the local frame by $\exp h(0,v,0,0)$ for a suitable local function $v$ we may assume $b^4=dx_4$. Because of $db^4=0$, equation~(\ref{EI2(b)}) shows that $\bv\in I_0$. Again by~(\ref{EI2(b)}), we obtain $db^3\in I(dx_5,dx_6)$. Transforming by $\exp h\big(0,0,0,(0,y_2)^\top\big)$ for a suitable local function $y_2$ we may assume $b^3=dx_3+q_3 dx_6$. Then
\begin{gather*}
db^3 = dq_3\wedge dx_6 = -\sqrt2 \bx\wedge dx_4-\by_2\wedge dx_5+\bv\wedge (dx_6+pdx_5)\\
\hphantom{db^3 = dq_3\wedge dx_6}{} \in I\big(dx_{(4,5)}, \by_2\wedge dx_5\big)+I_1,
\end{gather*}
thus $q_3=q_3(x_5,x_6,x_7)$ and $\by_2\in I(dx_4)+I_0$. Furthermore, (\ref{EI2(b)}) yields $db^2\in I(dx_5,dx_6)$. Transforming the local frame by $\exp h\big(0,0,0,(y_1,0)^\top\big)$ for a suitable function $y_1$ we may assume $b^2=dx_2+q_2 dx_6$, thus
\begin{gather*}
db^2 = dq_2\wedge dx_6 = -\bx\wedge b^2 -\bn \wedge b^3 -\sqrt2 \bu_1\wedge dx_4-\by_1\wedge dx_5-\bv\wedge dx_7\\
\hphantom{db^2}{} \in I\big(dx_{(2,5)}, dx_{(3,5)}, dx_{(3,6)}, dx_{(4,5)}, dx_{(4,6)}, \by_1\wedge dx_5\big)+I_1.
\end{gather*}
Hence, we have $q_2=q_2(x_3,\dots,x_7)$ and $\by_1\in I(dx_2,\dots,dx_7 )$. Finally, we have $db^1\in I_0$, thus $b^1=dx_1+r_5dx_5+r_6dx_6+r_7dx_7$ and (\ref{EI2(b)}) gives
\begin{gather*}
db^1 = dr_5\wedge dx_5+dr_6\wedge dx_6+dr_7\wedge dx_7\\
\hphantom{db^1}{} \in I\big( dx_{(1,5)}, dx_{(2,5)}, dx_{(2,6)}, dx_{(3,5)}, dx_{(3,6)} \big)+dx_4\wedge I_0 +I_1.
\end{gather*}

Consequently, $r_5=r_5(x_1,\dots,x_7)$, $r_6=r_6(x_2,\dots,x_7)$, $r_7=r_7(x_4,\dots,x_7)$.

Now let the metric $g$ be defined by (\ref{Eip}) with respect to the local coordinates that we considered above. Then the holonomy of $g$ is contained in $\fh$ if and only if $b^2(\nabla b_{1})=b^3(\nabla b_{1})=b^3(\nabla b_{2})=b^3(\nabla b_{3})=0$ and
\begin{gather}
\bx=b^1(\nabla b_1)=b^2(\nabla b_2)=-b^1(\nabla b_2)= \tfrac1{\sqrt2} b^3(\nabla b_4), \label{Et2}\\
\bu_1=\ b^1(\nabla b_3)= \tfrac 1{\sqrt2}b^2(\nabla b_4), \\
\bv= b^2(\nabla b_7)= \tfrac 1{\sqrt2}b^1(\nabla b_4). \label{Et4}
\end{gather}
Note that there are six more equations which have to be satisfied, but these are automatically fulfilled with the ansatz of the adapted coframe before. Using the Koszul formula, we find $b^2(\nabla b_{1})=b^3(\nabla b_{1})=b^3(\nabla b_{2})=b^3(\nabla b_{3})=0$ and
\begin{gather*}
b^1(\nabla b_1) = (r_5)_{x_1} b^5, \\
b^1(\nabla b_2) = \big((r_5)_{x_2} -(r_6)_{x_2}p\big)b^5 + \tfrac12 \big(p_{x_6}+(r_6)_{x_2}\big) b^6, \\
b^{2}(\nabla b_{2})= \tfrac12\big((r_6)_{x_2}- p_{x_6}\big) b^5, \\
b^1(\nabla b_3)= \big((r_5)_{x_3} -(r_6)_{x_3}p\big)b^5 +\tfrac12\big((r_6)_{x_3}- (q_2)_{x_3}p\big) b^6 , \\
b^1(\nabla b_4) = \big( (r_5)_{x_4}-(r_6)_{x_4} p\big)b^5 +\tfrac12\big((r_6)_{x_4}-(q_2)_{x_4} p\big) b^6 +\frac12 (r_7)_{x_4}b^7 , \\
b^2(\nabla b_4) = \tfrac12\big((r_6)_{x_4}-(q_2)_{x_4}p\big) b^5 +(q_2)_{x_4}b^6 , \\
b^3(\nabla b_4) = \tfrac12 (r_7)_{x_4}b^5 , \\
b^2(\nabla b_7)= \tfrac12 \big((r_6)_{x_7}-(r_7)_{x_6} -(q_2)_{x_7} p+(q_3)_{x_5}\big)b^5 +(q_2)_{x_7}b^6+(q_3)_{x_7}b^7.
\end{gather*}
Hence the system (\ref{Et2})--(\ref{Et4}) is equivalent to
\begin{gather*}
 p_{x_6}=-(r_6)_{x_2},\qquad
(r_5)_{x_1} =\tfrac12\big((r_6)_{x_2}- p_{x_6}\big) =(r_6)_{x_2}p-(r_5)_{x_2}=\tfrac 1{2\sqrt2}(r_7)_{x_4}=(q_3)_{x_7},\\
(r_5)_{x_3} -(r_6)_{x_3}p= \tfrac1{2\sqrt 2}\big((r_6)_{x_4}-(q_2)_{x_4}p\big)=(q_2)_{x_7},\qquad
 (r_6)_{x_3}- (q_2)_{x_3}p= {\sqrt2}(q_2)_{x_4},\\
\sqrt2 \big( (r_5)_{x_4}-(r_6)_{x_4} p\big)=(r_6)_{x_7}-(r_7)_{x_6} -(q_2)_{x_7} p+(q_3)_{x_5},
\end{gather*}
which is equivalent to the claim. \end{proof}

\begin{Remark} \label{34}The system (\ref{Esysta})--(\ref{Esyste}) can be easily solved. Obviously, (\ref{Esysta}) and (\ref{Esyst2}) imply
\begin{gather}
(q_2)_{x_4 x_4} =2 (q_2)_{x_3 x_7}. \label{ENB}
\end{gather}
Now choose $p=p(x_5,x_6)$, $\bar r_7=\bar r_7(x_5,x_6,x_7)$, $\bar q_3=\bar q_3(x_5,x_6)$ and $q_2=q_2(x_3,\dots,x_7)$ such that $q_2$ satisfies (\ref{ENB}). By integrating~(\ref{Esysta}) and~(\ref{Esyst2}), we determine~$\bar r_6$ up to a function of~$x_5$,~$x_6$,~$x_7$, which can be chosen arbitrarily. In order to show that we can determine~$\bar r_5$ such that~(\ref{Esyst3}) and~(\ref{Esyste}) are satisfied, we have to check that the derivative of the right hand side of~(\ref{Esyst3}) with respect to~$x_4$ equals the derivative of the right hand side of~(\ref{Esyste}) with respect to~$x_3$. But this is true since
\begin{gather*} \frac{\partial}{\partial x_4} \big(((q_2)_{x_3}p+ {\sqrt2}(q_2)_{x_4})p+ (q_2)_{x_7}\big)=\big((q_2)_{x_3x_4}p+ {\sqrt2}(q_2)_{x_4x_4}\big)p+ (q_2)_{x_7x_4}\end{gather*}
and since (\ref{Esysta}) and (\ref{ENB}) imply
\begin{gather*}
 \frac{\partial}{\partial x_3}\big(\tfrac 1{\sqrt2} \big((\bar r_6)_{x_7} - (\bar r_7)_{x_6} +3 (q_2)_{x_7}p +(q_3)_{x_5}\big) +2p_{x_6 x_6} x_4 +(q_2)_{x_4} p^2\big) \\
 \qquad{} = \tfrac 1{\sqrt2}\big((\bar r_6)_{x_7x_3} +3 (q_2)_{x_7x_3}p\big) +(q_2)_{x_4x_3} p^2\\
\qquad{} = \tfrac 1{\sqrt2}\big(\sqrt2(q_2)_{x_4x_7} +4 (q_2)_{x_7x_3}p\big) +(q_2)_{x_4x_3} p^2\\
\qquad{} = (q_2)_{x_4x_7} +\sqrt2 (q_2)_{x_4x_4}p +(q_2)_{x_4x_3} p^2.
\end{gather*}
Hence we can integrate (\ref{Esyst3}) and (\ref{Esyste}). This gives $\bar r_5$ up to a function of $x_5$, $x_6$, $x_7$, which also can be chosen arbitrarily.
\end{Remark}

\begin{Remark} \label{R35} We will say that a local coframe $b$ on an open subset $U\subset \RR^7$ is {\it $\fh$-adapted} if $b$ has the form (\ref{Eb}) and if the metric $g$ given by (\ref{Eip}) has holonomy contained in $\fh$. We have shown that each pseudo-Riemannian manifold of signature (4,3) whose holonomy is contained in $\fh$ is locally isometric to some $(U,g)$, where~$U$ is an open subset of~$\RR^7$ and~$g$ is given by~(\ref{Eip}) with respect to an $\fh$-adapted coframe. Furthermore, we proved that $\fh$-adapted coframes exist and that a general $\fh$-adapted coframe depends on 2~functions in 4~variables, 3~functions in~3~variables and~2~functions in~2~variables. The latter statement follows from Remark~\ref{34}. Indeed, in the sense of Cartan--K\"ahler theory, the contact system associated with the partial differential equation $u_{zz}=u_{xy}$ (for a function $u=u(x,y,z)$) is involutive and its solution depends on~2~functions in~2~variables. Hence the general solution $q=q(x_3,\dots,x_7)$ of (\ref{ENB}) depends on 2~functions in~4~variables. Furthermore, $p$, $\bar q_3$ and $\bar r_7$ can be chosen arbitrarily, which gives~2~functions in~2~variables and 1~function in~3~variables. Finally, the determination of~$\bar r_6$ and~$\bar r_7$ as described in Remark~\ref{34} gives 2~additional arbitrary functions in~3~variables (integration constants).
\end{Remark}

\begin{Example} Starting with $p(x_5,x_6)=x_6^2$, $q_2(x_3,\dots, x_7)=x_3+x_4$, $\bar q_3=\bar r_7=0$ we obtain
\begin{gather*}
q_3(x_5,x_6,x_7) = -2x_6 x_7 ,\\
r_5(x_1,\dots, x_7) = -2x_1 x_6 +2 \big(1-x_6^2\big) x_2 x_6 +\big(x_6^2 +\sqrt2\big)x_3 x_6^2 +2x_4^2 +x_4 x_6^4,\\
r_6(x_2,\dots, x_7) = -2x_2 x_6 +\big(x_6^2 +\sqrt2\big)x_3 +x_4 x_6^2,\qquad
r_7(x_4,\dots, x_7) = -4\sqrt 2x_4 x_6 .
\end{gather*}
Then one computes
\begin{gather*}R_{56}=2h\big(\diag(1,0),0,(0,1)^\top,0\big), \qquad R_{67}=-2h(0,1,0,0), \\ (\nabla_{b_5} R)_{56}=\sqrt2 h\big({-}N,0,(1,0)^\top,0\big).\end{gather*}
Since these elements generate $\fh$ as a Lie algebra, the holonomy algebra coincides with~$\fh$.
\end{Example}

\subsection[Type I 2(c), $i=j=0$]{Type I 2(c), $\boldsymbol{i=j=0}$}
We consider
\begin{gather*}\fh=\Span\big\{X=\diag(2,1),\, h\big(N,0,(0,1)^\top,0\big) \big\}\ltimes\fm(0,0,2).\end{gather*}
The structure equations are
\begin{gather}\label{EI2(c)}\left(\begin{matrix} db^1\\db^2\\db^3\\db^4\\db^5\\db^6\\db^7\end{matrix}\right) = -\left(
\begin{matrix}
3{\bx} &-\bn&0&0&0&-\by_1&-\by_2\\
0&2\bx&\bn&0&\by_1&0&0\\
0&0&\bx&\sqrt 2 \bn&\by_2&0&0\\
0&0&0&0&0&0&\sqrt 2 \bn\\
0&0&0&0&-3\bx&0&0\\
0&0&0&0&\bn&-2\bx&0\\
0&0&0&0&0&-\bn&-\bx
\end{matrix}\right) \wedge \left(\begin{matrix} b^1\\b^2\\b^3\\b^4\\b^5\\b^6\\b^7\end{matrix}\right),
\end{gather}
where bold symbols denote 1-forms.
\begin{Proposition} \label{P36} The holonomy of $\big(M^{4,3},g\big)$ is contained in the Lie algebra $\fh$ of Type~I~$2(c)$ with $i=j=0$, if and only if we can introduce local coordinates $x_1,\dots,x_7$ such that $g=2\big(b^1\cdot b^5+b^2\cdot b^6+b^3\cdot b^7\big)- \big(b^4\big)^2$ for
\begin{gather*}
b^1= dx_1 + r_5(x_1,\dots, x_7)dx_5+r_6(x_2,x_3,x_5,x_6,x_7)dx_6+r_7(x_3,x_4,x_5,x_6,x_7)dx_7, \\
b^2= dx_2+q_2(x_3,x_5,x_6)dx_6, \qquad b^3= dx_3+q_3(x_4,x_5,x_6) dx_6, \\
b^4= dx_4+ q(x_5,x_6,x_7)dx_5+q_4(x_5,x_6,x_7)dx_6, \\
b^j= dx_j,\qquad j=5,6, \qquad
b^7= dx_7 +p(x_5,x_6,x_7)dx_5,
\end{gather*}
where $p$, $q$, $q_2$, $q_3$, $q_4$, $r_5$, $r_6$, $r_7$ are of the form
\begin{gather*}
p = ax_6x_7+bx_7+\bar p,\qquad
q = -\tfrac1{\sqrt 2} ax_7^2 -\sqrt 2 \bar p_{x_6}x_7+\bar q, \\
q_2 = 2ax_3x_6 +2bx_3+\bar q_2,\qquad
q_3= 2\sqrt 2 (ax_6+b) x_4+\bar q_3, \qquad
q_4 = 2\sqrt 2 (ax_6+b) x_7+\bar q_4, \\
r_5=-(ax_6+b)(3x_1+px_3)+(ax_7+\bar p_{x_6})\big(x_2-\sqrt2 px_4\big)+\bar r_5, \\
r_6=-4(ax_6+b)x_2-\bar p_{x_6}x_3-ax_3x_7+\bar r_6, \\
r_7=-(ax_6+b)x_3-\sqrt2 \bar p_{x_6}x_4-\sqrt2 ax_4 x_7+\bar r_7
\end{gather*}
for some functions $a=a(x_5)$, $b=b(x_5)$, $\bar p=\bar p(x_5,x_6)$, $\bar q=\bar q(x_5,x_6)$, $\bar q_j=\bar q_j(x_5,x_6)$ and $\bar r_k=\bar r_k(x_5,x_6,x_7)$ for $j=2,3,4$, $k=5,6,7$ that satisfy
\begin{gather*}
\bar p_{x_6 x_6}=2(a^2x_6^2+(2ab-a')x_6+b^2-b'), \qquad
(\bar q_4)_{x_5}-(\bar q)_{x_6}=2\sqrt 2(ax_6+b)\bar p,\\
(\bar r_6)_{x_7}-(\bar r_7)_{x_6}=(ax_6+b)\big(2ax_7^2+\bar q_3+2\sqrt2 \bar q\big)+\sqrt2 \bar q_4(ax_7+\bar p_{x_6} )-(\bar q_3)_{x_5}.
\end{gather*}
\end{Proposition}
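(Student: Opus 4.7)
The plan is to mimic the procedure of Propositions~\ref{Lg} and~\ref{prop2.2}, working in two stages: first introduce adapted coordinates by progressively integrating the structure equations~(\ref{EI2(c)}) together with a suitable gauge fixing inside the structure group, and then translate the condition that the Levi-Civita connection lies in $\fh$ into PDEs on the coframe coefficients via the Koszul formula.

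For the first stage I start from a local section $b_1,\dots,b_7$ of the reduction to the holonomy group. From~(\ref{EI2(c)}) one has $db^5 = -3\bx\wedge b^5$, so $b^5 = fdx_5$; transforming pointwise by $\exp(tX)$ with a suitable function $t$ kills $f$, giving $b^5=dx_5$ and forcing $\bx\in I(dx_5)$. Then $db^6\in I(dx_5)$ lets me introduce~$x_6$ with $b^6=dx_6$, and $db^7\in I(dx_5,dx_6)$ combined with a gauge of the form $\exp h(sN,0,0,0)$ yields $b^7=dx_7+p(x_5,x_6,x_7)\,dx_5$. Next, $db^4\in I(dx_5,dx_6)$ produces $b^4=dx_4+q\,dx_5+q_4\,dx_6$, with $q,q_4$ depending only on $x_5,x_6,x_7$ because $\bn\in I(dx_5,dx_6)$ by the previous step. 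Continuing with $b^3$, $b^2$ and $b^1$ in the same spirit, using gauges of the form $\exp h(0,0,0,y)$ to kill the $dx_5$ and $dx_7$ pieces of $b^2,b^3$ and reading off the ideal membership of each $db^i$ from~(\ref{EI2(c)}), one obtains the stated variable dependencies $q_2=q_2(x_3,x_5,x_6)$, $q_3=q_3(x_4,x_5,x_6)$, $r_5=r_5(x_1,\dots,x_7)$, $r_6=r_6(x_2,x_3,x_5,x_6,x_7)$, $r_7=r_7(x_3,x_4,x_5,x_6,x_7)$.

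For the second stage I note that the connection $1$-form $\theta=(b^i(\nabla b_j))$ is $\fh$-valued iff the entries outside the pattern specified by~(\ref{EI2(c)}) vanish and the diagonal entries obey the ratios $1:2:3$ dictated by $X=\diag(2,1)$ along with the coupling imposed by $h(N,0,(0,1)^\top,0)$. Concretely, besides the trivial identities built into the coframe ansatz, this amounts to a collection of equations of the form $\theta^i_j = 0$ or $\theta^i_j=\theta^k_\ell$, analogous to~(\ref{Es1})--(\ref{Es4}) and~(\ref{Et2})--(\ref{Et4}). Using the Koszul formula to compute each $b^i(\nabla b_j)$ and substituting the coframe, one obtains a system of PDEs in $p$, $q$, $q_2$, $q_3$, $q_4$, $r_5$, $r_6$, $r_7$. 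The eigenvalue structure $3\bx,2\bx,\bx$ on $b^1,b^2,b^3$ forces the $x_6$-derivatives of the functions sitting at different levels to be proportional with fixed coefficients (yielding the repeated factor $ax_6+b$), while the nilpotent generator $h(N,0,(0,1)^\top,0)$ forces the specific coupling between the $x_6$- and $x_7$-dependence (producing the factors $ax_7+\bar p_{x_6}$ and the $x_7^2$ term in $q$).

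For the third stage I integrate the resulting system. The lowest-order equations fix the $x_6,x_7$-dependence of $p$: one finds $p_{x_7x_7}=0$ and $p$ affine in $x_6$, producing the ansatz $p=ax_6x_7+bx_7+\bar p$ with $a,b$ depending only on $x_5$ and $\bar p=\bar p(x_5,x_6)$; the compatibility of the next layer gives the second-order ODE $\bar p_{x_6x_6}=2(a^2x_6^2+(2ab-a')x_6+b^2-b')$. Propagating this through the equations for $q$, $q_2$, $q_3$, $q_4$ one integrates in $x_7$ (resp.~$x_4$) to get the displayed polynomial forms, with arbitrary integration constants $\bar q,\bar q_2,\bar q_3,\bar q_4$ as functions of $x_5,x_6$. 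Finally, the equations for $r_5,r_6,r_7$ integrate in $x_1,x_2,x_3,x_4$, giving the quoted polynomial expressions together with the single residual closure condition $(\bar r_6)_{x_7}-(\bar r_7)_{x_6}=(ax_6+b)(2ax_7^2+\bar q_3+2\sqrt2\bar q)+\sqrt2\bar q_4(ax_7+\bar p_{x_6})-(\bar q_3)_{x_5}$, and the one equation $(\bar q_4)_{x_5}-\bar q_{x_6}=2\sqrt2(ax_6+b)\bar p$ coming from the $b^1(\nabla b_4)=0$ and $b^3(\nabla b_4)=\sqrt2\bn$ constraints.

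The main obstacle I anticipate is the bookkeeping of the third stage: each PDE produced by the Koszul computation mixes derivatives of several unknowns in several variables, and the polynomial ans\"atze must be inserted carefully so that the cross-compatibility conditions (mixed partials) actually close into the two residual equations quoted in the statement, rather than overdetermining the system. The converse direction — that any choice of functions obeying the listed constraints gives a coframe whose Levi-Civita connection lies in $\fh$ — will follow by reading the same computation backwards.
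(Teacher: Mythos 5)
Your overall architecture coincides with the paper's proof: integrate the structure equations~(\ref{EI2(c)}) step by step, using gauge transformations in the structure group to reach the adapted coframe, then impose via the Koszul formula that $\theta=(b^i(\nabla b_j))$ is $\fh$-valued (concretely, $\bx=\tfrac13 b^1(\nabla b_1)=\tfrac12 b^2(\nabla b_2)=b^3(\nabla b_3)$, $\bn=-b^1(\nabla b_2)=b^2(\nabla b_3)=\tfrac1{\sqrt2}b^3(\nabla b_4)$, and the vanishing of $b^1(\nabla b_3)$, $b^1(\nabla b_4)$, $b^2(\nabla b_4)$, $b^2(\nabla b_7)$), and finally integrate the resulting system; the two residual equations you isolate are exactly the ones the paper obtains.

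There is, however, a concrete misstep in your gauge fixing that would derail the first stage as written. First, $\exp h(sN,0,0,0)$ is not an admissible transformation: $h(N,0,0,0)\notin\fh$, since $\fh$ only contains the combination $h\big(N,0,(0,1)^\top,0\big)$, and transforming by an element outside $H$ destroys the property that $b$ is a section of the holonomy reduction --- which is precisely what makes $\theta$ an $\fh$-valued form in the second stage. Second, $db^6\in I(dx_5)$ only gives $b^6=dx_6+t_1\,dx_5$, not $b^6=dx_6$; the single $N$-type gauge direction must be spent on killing this $t_1\,dx_5$ term (via the $(6,5)$-entry of $h\big(N,0,(0,1)^\top,0\big)$), as the paper does. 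Once $b^6=dx_6$ one gets $\bn\in I(dx_5,dx_6)$ and hence $db^7\in I(dx_5)$ automatically, so $b^7=dx_7+p\,dx_5$ needs no further normalisation. If you instead spend the gauge on removing the $dx_6$-component of $b^7$, no freedom remains to normalise $b^6$, and the coframe does not take the form asserted in the statement. The remainder of your outline (keeping both $q\,dx_5$ and $q_4\,dx_6$ in $b^4$ because no $\bv$ is available here, the $\exp h(0,0,0,y)$ gauges for $b^2$, $b^3$, and the integration giving $p_{x_7}=ax_6+b$ and $\bar p_{x_6x_6}=2(a^2x_6^2+(2ab-a')x_6+b^2-b')$) agrees with the paper; note only that $p$ itself is not affine in $x_6$ --- only $p_{x_7}$ is --- and that $(\bar q_4)_{x_5}-\bar q_{x_6}=2\sqrt2(ax_6+b)\bar p$ arises from $b^1(\nabla b_4)=0$ and $b^2(\nabla b_4)=0$ rather than from $b^3(\nabla b_4)=\sqrt2\,\bn$.
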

\begin{proof} We can introduce coordinates such that $b^5\in\Span\{dx_5\}$. Changing the local frame $b_1,\dots , b_7$ by $\exp tX$ for a suitable local function $t$ we obtain $b^5=dx_5$. Then $\bx\in I(dx_5)$, hence $db^6\in I(dx_5)$ by~(\ref{EI2(c)}). Consequently, we can introduce $x_6$ such that $b^6 =dx_6 + t_1 dx_5$ for some function $t_1$. Now we change the local frame $b_1,\dots,b_7$ by $\exp \big(t_2\cdot h\big(N,0,(0,1)^\top,0\big)\big)$ for a suitable local function $t_2$ such that $b^6=dx_6$. Then $\bn\in I(dx_5,dx_6)$ by (\ref{EI2(c)}).
Thus $db^7\in I(dx_5)$, which gives $b^7=dx_7+pdx_5$ for suitable functions $x_7$ and $p$. Then
\begin{gather*}dp\wedge dx_5=db^7=\bn\wedge b^6+ \bx\wedge b^7 \in I_1\end{gather*}
implies $p=p(x_5,x_6,x_7)$. By (\ref{EI2(c)}), we get $db^4=-\sqrt 2 \bn\wedge b^7$, thus $db^4\in I(dx_5, dx_6)$. Hence we can choose $x_4$ such that $b^4=dx_4+ qdx_5+q_4dx_6$. Now
\begin{gather*}db^4=-\sqrt 2 \bn\wedge b^7\in I_1\end{gather*} gives $q=q(x_5,x_6,x_7)$ and $q_4=q_4(x_5,x_6,x_7)$. Again by (\ref{EI2(c)}), we have $db^3\in I(dx_5,dx_6)$. Thus we can choose $x_3$ such that $b^3=dx_3+t_1dx_5+t_2 dx_6$ for some functions $t_1$, $t_2$. Now we change the local frame $b_1,\dots,b_7$ by $\exp (h(0,0,0,(0,y_2)^\top))$ for a suitable local function $y_2$ such that $b^3=dx_3+q_3dx_6$. Since
\begin{gather*}dq_3\wedge dx_6=db^3=-\bx\wedge b^3 -\sqrt 2\bn\wedge b^4-\by_2\wedge b^5,\end{gather*}
we have $dq_3\in I(dx_4,dx_5, dx_6)$, thus $q_3=q_3(x_4,x_5,x_6)$. This gives $\by_2\in I(dx_3,dx_4,dx_5,dx_6)$.
Similarly, $b^2=dx_2+q_2dx_6$, where $q_2=q_2(x_3,x_5,x_6)$. Thus $\by_1\in I(dx_2,dx_3,dx_5,dx_6)$. Finally, (\ref{EI2(c)}) shows that $db^1\in I_0$. Thus we can choose $x_1$ such that $b^1=dx_1+ r_5dx_5+r_6dx_6+r_7dx_7$. Then
\begin{gather*}
 dr_5\wedge dx_5+dr_6\wedge dx_6+dr_7\wedge dx_7= -3\bx\wedge b^1+\bn\wedge b^2+\by_1\wedge b^6+\by_2\wedge b^7 \\
\qquad{} \in I\big( dx_{(1,5)}, dx_{(2,5)}, dx_{(2,6)}, dx_{(4,5)}, dx_{(4,7)}\big)+ dx_3\wedge I_0 +I_1,
\end{gather*}
hence $r_5$, $r_6$, $r_7$ are as claimed.

We proceed as in the proof of Proposition~\ref{Lg}. We compute $b^2(\nabla b_1)=b^3(\nabla b_1) =b^3(\nabla b_2) =0$ and
\begin{gather*}
b^1(\nabla b_1) = (r_5)_{x_1} b^5, \\
b^1(\nabla b_2)= (r_5)_{x_2} b^5+\tfrac12 (r_6)_{x_2} b^6,\\
b^2(\nabla b_2)= \tfrac12 (r_6)_{x_2} b^5,\\
b^1(\nabla b_3)= \big((r_5)_{x_3}- (r_7)_{x_3 }p\big) b^5+\tfrac12 \big(p_{x_6}+(r_6)_{x_3}\big) b^6+\tfrac12 \big(p_{x_7}+(r_7)_{x_3}\big) b^7,\\
 b^2(\nabla b_3)= \tfrac12 \big((r_6)_{x_3}-p_{x_6}\big) b^5 +(q_2)_{x_3}b^6,\\
b^3(\nabla b_3)= \tfrac12 \big((r_7)_{x_3}-p_{x_7}\big) b^5,\\
b^1(\nabla b_4)= \big((r_5)_{x_4}-(r_7)_{x_4}p\big)b^5-\tfrac12\big((q_4)_{x_7}p-(q_4)_{x_5}+q_{x_6} \big)b^6-\tfrac12\big(q_{x_7}-(r_7)_{x_4}\big)b^7,\\
b^2(\nabla b_4)= \tfrac12\big((q_4)_{x_7}p-(q_4)_{x_5}+q_{x_6}\big)b^5-\tfrac12\big((q_4)_{x_7}-(q_3)_{x_4}\big)b^7,\\
b^3(\nabla b_4)= \tfrac12\big(q_{x_7}+(r_7)_{x_4}\big)b^5+\tfrac12\big((q_4)_{x_7}+(q_3)_{x_4}\big)b^6,\\
b^2(\nabla b_7)= \tfrac12\big((q_3)_{x_4}-(q_4)_{x_7}\big)b^4 \\
\hphantom{b^2(\nabla b_7)=}{} +\tfrac12\big((r_6)_{x_7} -(r_7)_{x_6}+(r_7)_{x_4}q_4+(r_7)_{x_3}q_3+(q_3)_{x_5}-(q_3)_{x_4}q\big)b^5.
\end{gather*}
The equation $\bx=\frac13 b^1(\nabla b_1)=\frac12 b^2(\nabla b_2)=b^3(\nabla b_3)$ is equivalent to
\begin{gather}
\tfrac13(r_5)_{x_1} = \tfrac14(r_6)_{x_2} =\tfrac12 \big((r_7)_{x_3}-p_{x_7}\big).\label{ED1}
\end{gather}
Furthermore, $\bn=-b^1(\nabla(b_2))=b^2(\nabla b_3)=\frac1{\sqrt 2} b^3(\nabla b_4)$ is equivalent to
\begin{gather}
 -(r_5)_{x_2} =\tfrac12 \big((r_6)_{x_3}-p_{x_6}\big)=\tfrac1{2\sqrt 2} \big(q_{x_7}+(r_7)_{x_4}\big),\label{ED2}\\
- \tfrac12 (r_6)_{x_2} =(q_2)_{x_3} =\tfrac1{2\sqrt 2}\big((q_4)_{x_7}+(q_3)_{x_4}\big).\label{ED3}
\end{gather}
Finally, $ b^1(\nabla b_3) =b^1(\nabla b_4)=b^2(\nabla b_4 )=b^2(\nabla b_7)=0$ is equivalent to
\begin{gather}
 (r_7)_{x_3 }p=(r_5)_{x_3}, \label{ED4}\\
 p_{x_6}+(r_6)_{x_3}=0, \label{ED5}\\
 p_{x_7}+(r_7)_{x_3}=0, \label{ED6}\\
(r_7)_{x_4}p=(r_5)_{x_4}, \label{ED7}\\
(q_4)_{x_7}p-(q_4)_{x_5}+q_{x_6} =0, \label{ED8}\\
q_{x_7}=(r_7)_{x_4}, \label{ED9}\\
(q_4)_{x_7}=(q_3)_{x_4}, \label{ED11}\\
 (r_6)_{x_7}-(r_7)_{x_6}+(r_7)_{x_4}q_4+(r_7)_{x_3}q_3+(q_3)_{x_5}-(q_3)_{x_4}q=0.\label{ED12}
 \end{gather}

Equations (\ref{ED1}), (\ref{ED3}) and (\ref{ED6}) imply $-4 p_{x_7}=(r_6)_{x_2}=-2(q_2)_{x_3}$. Hence $p_{x_7}$ does not depend on $x_7$ and $(q_2)_{x_3}$ does not depend on $x_3$. Thus
\begin{gather*}p(x_5,x_6,x_7)=\hat p(x_5,x_6)x_7+\bar p(x_5,x_6), \qquad q_2(x_3,x_5,x_6)=2\hat p(x_5,x_6)x_3 +\bar q_2(x_5,x_6).\end{gather*}
Now (\ref{ED2}), (\ref{ED5}), (\ref{ED6}) and (\ref{ED9}) yield
\begin{gather*}
r_6 = -4 \hat px_2-\hat p_{x_6} x_3 x_7 -\bar p_{x_6} x_3 +\bar r_6,\qquad \bar r_6=\bar r_6(x_5,x_6,x_7),\\
r_7 = -\hat p x_3-\sqrt 2 \hat p_{x_6} x_4x_7-\sqrt 2\bar p_{x_6} x_4 +\bar r_7, \qquad \bar r_7=\bar r_7(x_5,x_6,x_7),
\end{gather*}
and (\ref{ED1}), (\ref{ED2}), (\ref{ED4}) and (\ref{ED7}) give
\begin{gather*} r_5= -3\hat p x_1+p_{x_6}x_2-\hat ppx_3-\sqrt2 p_{x_6}px_4+\bar r_5,\qquad \bar r_5=\bar r_5(x_5,x_6,x_7).\end{gather*}

Equations (\ref{ED11}) and (\ref{ED3}) imply
 $(q_4)_{x_7}=(q_3)_{x_4}=-\frac 1{\sqrt2} (r_6)_{x_2}=2\sqrt 2 \hat p$, thus
\begin{gather*} q_3=2\sqrt 2 \hat px_4+\bar q_3,\qquad q_4=2\sqrt 2 \hat p x_7 +\bar q_4, \qquad \bar q_j=\bar q_j(x_5,x_6),\qquad j=3,4 .\end{gather*}
Moreover, (\ref{ED9}) gives
\begin{gather*}q= -\tfrac1{\sqrt 2} \hat p_{x_6} x_7^2-\sqrt 2 \bar p_{x_6}x_7+\bar q,\qquad \bar q=\bar q(x_5,x_6). \end{gather*}

Now (\ref{ED8}) is equivalent to
\begin{gather*}\hat p_{x_6 x_6}=0,\qquad 2\hat p^2=2\hat p_{x_5}+\bar p_{x_6 x_6}, \qquad
2\sqrt 2\hat p\bar p=(\bar q_4)_{x_5}-(\bar q)_{x_6}\end{gather*}
and (\ref{ED12}) is equivalent to
\begin{gather*}(\bar r_6)_{x_7}-(\bar r_7)_{x_6}=2\hat p_{x_6}\hat px_7^2+\sqrt2\hat p_{x_6}\bar q_4x_7+\sqrt2 \bar p_{x_6} \bar q_4+\hat p\bar q_3-(\bar q_3)_{x_5}+2\sqrt2 \hat p\bar q.\end{gather*}
Putting $\hat p(x_5,x_6)=a(x_5)x_6+b(x_5)$, we obtain the assertion.
\end{proof}
\begin{Remark}\label{R38} Similarly to Remark~\ref{R35} we can determine the generality of $\fh$-adapted coframes. The functions $a$, $b$, $\bar q$, $\bar q_2$, $\bar q_3$, $\bar r_5$, $\bar r_6 $ can be chosen arbitrarily. This gives 2~functions in 3~variables, 3~functions in 2~variables and 2 functions in 1 variable. The stepwise determination of $\bar p$, $\bar q_4$ and~$\bar r_7$ gives 1 more function in 2~variables and 2~functions in 1~variable. All in all, a general $\fh$-adapted coframe depends on 2~functions in 3~variables, 4~functions in 2~variables and 4~functions in 1~variable.
\end{Remark}
\begin{Example}\label{Ex37}
For $a(x_5):=1$, $\bar p(x_5,x_6):=\frac16 x_6^4$, $\bar q_4(x_5,x_6):=\frac {\sqrt2}3x_5 x_6^5$, \begin{gather*}\bar r_6(x_5,x_6,x_7):= \tfrac23 x_6 x_7^3+\tfrac13 x_5x_6^5 x_7^2+\tfrac 49 x_5x_6^8x_7 \end{gather*}
and $b=\bar q_2=\bar q_3 =\bar q=\bar r_5= \bar r_7=0$ the holonomy equals $\fh$. To show this we determine the operators $R_{56}$, $R_{57}$ and $R_{36}$, which are in~$\fh$ by Proposition~\ref{P36}. As in Section~\ref{S3.2} we only compute those entries that are needed and write `$*$' for the remaining ones. We obtain
\begin{gather*}R_{56}=h(\diag(2,1), *,*,*),\qquad R_{57}=h(N,*,*,*,),\qquad R_{36}=h\big(0,0,0,(0,1)^\top\big),\end{gather*}
which generate $\fh$ as a Lie algebra. This proves the assertion.
\end{Example}

\subsection[Type I 2(c), $i=1$, $j=0$]{Type I 2(c), $\boldsymbol{i=1}$, $\boldsymbol{j=0}$}
We consider
\begin{gather*}\fh=\Span\big\{X=\diag(2,1),\, h\big(N,0,(0,1)^\top,0\big) \big\}\ltimes\fm(1,0,2).\end{gather*}
The structure equations are
\begin{gather}\label{EI2(c)i=1}\left(\begin{matrix} db^1\\db^2\\db^3\\db^4\\db^5\\db^6\\db^7\end{matrix}\right) = -\left(
\begin{matrix}
3{\bx} &-\bn&0&\sqrt2 \bv&0&-\by_1&-\by_2\\
0&2\bx&\bn&0&\by_1&0&\bv\\
0&0&\bx&\sqrt 2 \bn&\by_2&-\bv&0\\
0&0&0&0&\sqrt2 \bv&0&\sqrt 2 \bn\\
0&0&0&0&-3\bx&0&0\\
0&0&0&0&\bn&-2\bx&0\\
0&0&0&0&0&-\bn&-\bx
\end{matrix}\right) \wedge \left(\begin{matrix} b^1\\b^2\\b^3\\b^4\\b^5\\b^6\\b^7\end{matrix}\right).
\end{gather}
\begin{Proposition} \label{P2ci} The holonomy of $\big(M^{4,3},g\big)$ is contained in the Lie algebra $\fh$ of Type~I~$2(c)$ with $i=1$, $j=0$ if and only if we can introduce local coordinates $x_1,\dots,x_7$ such that $g=2\big(b^1\cdot b^5+b^2\cdot b^6+b^3\cdot b^7\big)- \big(b^4\big)^2$ for
\begin{gather*}
b^1 = dx_1 + r_5(x_1,\dots, x_7)dx_5+r_6(x_2,\dots,x_7)dx_6+r_7(x_3,\dots,x_7)dx_7, \\
b^2 = dx_2+q_2(x_3,x_5,x_6,x_7)dx_6, \qquad
b^3 = dx_3+q_3(x_4,\dots,x_7) dx_6, \\
b^4 = dx_4+q_4(x_5,x_6,x_7)dx_6, \qquad
b^k = dx_k,\qquad k=5,6, \\
b^7 = dx_7 +p(x_5,x_6,x_7)dx_5,
\end{gather*}
where $q_2$, $q_3$, $r_5$, $r_6$, $r_7$ are of the form
\begin{gather*}
q_2 = 2p_{x_7}x_3+\bar q_2,\qquad
q_3 = 2\sqrt 2 p_{x_7}x_4+\bar q_3,\\
r_5 = -3p_{x_7}x_1+p_{x_6} x_2-p_{x_7}px_3 + \big(2 p_{x_5x_7}+p_{x_6 x_6}-2p_{x_7}^2\big)x_4^2 \\
 \hphantom{r_5 =}{} + \tfrac1{\sqrt2} \big((\bar r_6)_{x_7}-(\bar r_7)_{x_6}-2\sqrt2 p_{x_6}q_4-p_{x_7}\bar q_3 -3p_{x_6}p+(\bar q_3)_{x_5}\big)x_4+\bar r_5,\\
r_6 = -4p_{x_7}x_2- p_{x_6}x_3+\sqrt2 (\bar q_2)_{x_7}x_4+\bar r_6,\qquad
r_7 = -p_{x_7}x_3-2\sqrt2 p_{x_6}x_4+\bar r_7
\end{gather*}
for functions $\bar q_k=\bar q_k(x_5,x_6,x_7)$, $k=2,3$, $\bar r_l=\bar r_l(x_5,x_6,x_7)$, $l=5,6,7$, satisfying
\begin{gather*}p_{x_7 x_7}=0, \qquad (\bar q_3)_{x_7}=-p_{x_6},\qquad (q_4)_{x_7}=2\sqrt2 p_{x_7}, \qquad
\sqrt2 (\bar q_2)_{x_7}=(q_4)_{x_5}-2\sqrt2 p_{x_7}p.
\end{gather*}
\end{Proposition}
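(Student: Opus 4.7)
The plan is to follow very closely the two-step template of Propositions~\ref{Lg}, \ref{prop2.2} and \ref{P36}: first prove necessity by introducing adapted coordinates step by step through a cascade of gauge transformations by elements of the connected group with Lie algebra $\fh$, and then prove sufficiency by applying the Koszul formula to the candidate coframe and reading off the constraints that force $\theta=(b^i(\nabla b_j))$ to take values in $\fh$. Compared with Proposition~\ref{P36}, the new phenomenon is the presence of the summand $\fm(1,0,0)$, which enlarges the gauge group in the $\bv$-direction and simultaneously liberates the $\bv$-component of $\theta$; both effects must be tracked carefully but the skeleton of the argument is unchanged.

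For necessity I would use (\ref{EI2(c)i=1}) exactly as in the proof of Proposition~\ref{P36}. The identities $db^5=-3\bx\wedge b^5$, $db^6\in I(dx_5)$, $db^7\in I(dx_5,dx_6)$ give successively $b^5=dx_5$, $b^6=dx_6$, $b^7=dx_7+p\,dx_5$ after applying $\exp(tX)$ and $\exp(s\,h(N,0,(0,1)^\top,0))$ for suitable local functions, and this forces $\bx\in I(dx_5)$, $\bn\in I(dx_5,dx_6)$ and $p=p(x_5,x_6,x_7)$. The crucial new point appears at $b^4$: now $db^4=-\sqrt2(\bv\wedge b^5+\bn\wedge b^7)$ allows $b^4=dx_4+\hat q\,dx_5+q_4\,dx_6$, and because $\fm(1,0,0)\subset\fh$, the additional gauge $\exp h(0,v,0,0)$ eliminates the $dx_5$ part and gives $b^4=dx_4+q_4\,dx_6$ with $q_4=q_4(x_5,x_6,x_7)$, while liberating $\bv$ to lie in $I_0$ but no smaller ideal. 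One then completes the cascade by introducing $x_3$, $x_2$, $x_1$, normalising by $\exp h(0,0,0,y)$ to kill the $dx_5$-pieces of $b^2$ and $b^3$, and reading off the claimed coordinate dependencies of $q_2$, $q_3$, $r_5$, $r_6$, $r_7$ from the remaining structure equations.

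For sufficiency I would compute $b^i(\nabla b_j)$ exactly as in the earlier propositions. The containment of $\theta$ in $\fh$ is equivalent to the coframe-ansatz equations, supplemented by the identifications $\bx=\tfrac13 b^1(\nabla b_1)=\tfrac12 b^2(\nabla b_2)=b^3(\nabla b_3)$, $\bn=-b^1(\nabla b_2)=b^2(\nabla b_3)=\tfrac1{\sqrt2}b^3(\nabla b_4)$, $\bv=b^2(\nabla b_7)=\tfrac1{\sqrt2}b^1(\nabla b_4)$, together with the vanishing of those components of $b^1(\nabla b_3)$, $b^1(\nabla b_4)$ and $b^2(\nabla b_4)$ that correspond to the now-suppressed entries $a_3=0$, $\ba_1=2\ba_4$, $u_1=0$ but \emph{not} the $\bv$ entries. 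Plugging in the ansatz, the first group of identifications yields the linear expressions displayed for $q_2$, $q_3$, $r_6$, $r_7$ in terms of $p_{x_6}$, $p_{x_7}$ and the integration constants $\bar q_k,\bar r_l$; the remaining vanishings yield the four stated differential equations $p_{x_7x_7}=0$, $(\bar q_3)_{x_7}=-p_{x_6}$, $(q_4)_{x_7}=2\sqrt2\,p_{x_7}$ and $\sqrt2(\bar q_2)_{x_7}=(q_4)_{x_5}-2\sqrt2\,p_{x_7}p$; and the identification $b^2(\nabla b_7)=\tfrac1{\sqrt2}b^1(\nabla b_4)$, expanded with all previous substitutions, reproduces precisely the polynomial structure of $r_5$ in $x_4$, including the $x_4^2$-coefficient $2p_{x_5x_7}+p_{x_6x_6}-2p_{x_7}^2$.

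The main obstacle will be the sufficiency bookkeeping: one must keep straight which components of $\theta$ are freed by the presence of $\fm(1,0,0)$ and which are still required to vanish, so that the PDEs extracted from the Koszul computation are neither too weak nor too strong. In particular, I expect that verifying $p_{x_7x_7}=0$ emerges as an integrability consequence (rather than an independent assumption) of the two conditions $p_{x_7}=\tfrac12(\bar r_6)_{x_2}/4$ and $(q_3)_{x_4}=2\sqrt2\,p_{x_7}$, and that the correct $x_4^2$-coefficient in $r_5$ requires simultaneously expanding $b^1(\nabla b_4)$ and $b^2(\nabla b_7)$ and matching their $b^5$-components, which is the only place where the second-order derivatives of $p$ enter. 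Once these consistency checks are in place, the translation between the holonomy condition and the stated system proceeds by the same kind of manipulation as in Remark~\ref{34} and the proof of Proposition~\ref{P36}.
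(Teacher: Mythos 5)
Your plan is correct and follows essentially the same route as the paper: necessity by the same cascade of adapted coordinates and gauge transformations (with the extra $\exp h(0,v,0,0)$-gauge from $\fm(1,0,0)$ reducing $b^4$ to $dx_4+q_4\,dx_6$ and freeing $\bv\in I_0$), and sufficiency by the Koszul computation with exactly the identifications $\bx=\tfrac13 b^1(\nabla b_1)=\tfrac12 b^2(\nabla b_2)=b^3(\nabla b_3)$, $\bn=-b^1(\nabla b_2)=b^2(\nabla b_3)=\tfrac1{\sqrt2}b^3(\nabla b_4)$, $\bv=\tfrac1{\sqrt2}b^1(\nabla b_4)=b^2(\nabla b_7)$ plus the vanishing of the suppressed components. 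Only one small attribution is off: in the paper $p_{x_7x_7}=0$ drops out of commuting the $x_3$- and $x_7$-derivatives of $q_2$ (from $(q_2)_{x_3}=2p_{x_7}$ together with the $x_3$-independence of $\sqrt2(q_2)_{x_7}=(q_4)_{x_5}-2\sqrt2 p_{x_7}p$), not from the pair of conditions you name, but this does not affect the argument.
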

\begin{proof} As for Type I 2(c), $i=j=0$, we can introduce coordinates such that $b^k=dx_k$, $k=5,6$, $b^7= dx_7 +p(x_5,x_6,x_7)dx_5$ and	 $\bx\in I(dx_5)$, $\bn\in I(dx_5,dx_6)$. But now we can assume additionally, that $b^4=dx_4+q_4 dx_6$. Then
\begin{gather*}db^4=dq_4\wedge dx_6=-\sqrt2\bv\wedge dx_5-\sqrt 2 \bn\wedge b^7\end{gather*} gives $q_4=q_4(x_5,x_6,x_7)$ and $\bv\in I_0$.
Again by~(\ref{EI2(c)i=1}), we have $db^3\in I(dx_5,dx_6)$. As for Type~I~2(c), $i=j=0$ we can choose~$x_3$ such that $b^3=dx_3+q_3dx_6$. Since
\begin{gather*}
dq_3\wedge dx_6 = db^3=-\bx\wedge b^3 -\sqrt 2\bn\wedge b^4-\by_2\wedge dx_5+\bv\wedge dx_6\\
\hphantom{dq_3\wedge dx_6}{} \in I\big(dx_{(3,5)}, dx_{(4,5)}, dx_{(4,6)}, \by_2\wedge dx_5\big)+I_1,
\end{gather*}
we have $q_3=q_3(x_4,\dots,x_7)$ and $\by_2\in I(dx_3,\dots,dx_7)$.
Similarly, $b^2=dx_2+q_2dx_6$, where $q_2=q_2(x_3,x_5,x_6,x_7)$. Thus $\by_1\in I(dx_2,dx_3)+I_0$. Finally, (\ref{EI2(c)i=1}) shows that $db^1\in I_0$. Thus we can choose $b^1=dx_1+ r_5dx_5+r_6dx_6+r_7dx_7$. Then
\begin{gather*}
 dr_5\wedge dx_5+dr_6\wedge dx_6+dr_7\wedge dx_7= -3\bx\wedge b^1+\bn\wedge b^2-\sqrt2\bv\wedge b^4+\by_1\wedge b^6+\by_2\wedge b^7 \\
 \qquad{}\in I\big( dx_{(1,5)}, dx_{(2,5)}, dx_{(2,6)}\big)+ dx_3\wedge I_0 +dx_4\wedge I_0+I_1,
\end{gather*}
hence $r_5$, $r_6$, $r_7$ are as claimed. We proceed as in the proof of Proposition~\ref{Lg}. For $\nabla b_1=\nabla b_2=\nabla b_3 =0$, we get the same formulas as in the case of Type~I~2(c), $i=j=0$. Furthermore,
\begin{gather*}
b^1(\nabla b_4)= \big((r_5)_{x_4}-(r_7)_{x_4}p\big)b^5+\tfrac12\big({-}(q_4)_{x_7}p+(q_4)_{x_5}+(r_6)_{x_4} \big)b^6 + \tfrac12(r_7)_{x_4}b^7,\\
b^2(\nabla b_4)= \tfrac12\big((q_4)_{x_7}p-(q_4)_{x_5}+(r_6)_{x_4}\big)b^5 +\tfrac12\big((q_3)_{x_4}-(q_4)_{x_7}\big)b^7,\\
b^3(\nabla b_4)= \tfrac12(r_7)_{x_4}b^5+\tfrac12\big((q_4)_{x_7}+(q_3)_{x_4}\big)b^6,\\
b^2(\nabla b_7)= \tfrac12((q_3)_{x_4}-(q_4)_{x_7})b^4 +\tfrac12\big((r_6)_{x_7} -(r_7)_{x_6}+(r_7)_{x_4}q_4+(r_7)_{x_3}q_3+(q_3)_{x_5} \\
\hphantom{b^2(\nabla b_7)=}{} -(q_3)_{x_7}p\big)b^5 +(q_2)_{x_7}b^6 +(q_3)_{x_7}b^7.
\end{gather*}
The equation $\bx=\frac13 b^1(\nabla b_1)=\frac12 b^2(\nabla b_2)=b^3(\nabla b_3)$ is equivalent to (\ref{ED1}). Furthermore,
$\bn=-b^1(\nabla(b_2))=b^2(\nabla b_3)=\frac1{\sqrt 2} b^3(\nabla b_4)$ is equivalent to
\begin{gather}
 -(r_5)_{x_2} =\tfrac12 \big((r_6)_{x_3}-p_{x_6}\big)=\tfrac1{2\sqrt 2} (r_7)_{x_4},\label{ED2a}\\
- \tfrac12 (r_6)_{x_2} =(q_2)_{x_3} =\tfrac1{2\sqrt 2}\big((q_4)_{x_7}+(q_3)_{x_4}\big),
\end{gather}
and $\bv=\frac1{\sqrt2} b^1(\nabla b_4)=b^2(\nabla b_7)$ is equivalent to
\begin{gather}
(q_3)_{x_4}=(q_4)_{x_7},\\
\sqrt2\big((r_5)_{x_4}-(r_7)_{x_4}p\big)=(r_6)_{x_7} -(r_7)_{x_6}+(r_7)_{x_4}q_4+(r_7)_{x_3}q_3+(q_3)_{x_5}-(q_3)_{x_7}p,\\
-(q_4)_{x_7}p+(q_4)_{x_5}+(r_6)_{x_4} = 2\sqrt2 (q_2)_{x_7}, \\
(r_7)_{x_4} = 2\sqrt2 (q_3)_{x_7}.
\end{gather}
Finally, $ b^1(\nabla b_3) =b^2(\nabla b_4 )=0$ is equivalent to
\begin{gather}
(r_5)_{x_3}=(r_7)_{x_3 }p,\qquad p_{x_6}+(r_6)_{x_3}=0,\qquad p_{x_7}+(r_7)_{x_3}=0, \\
(q_4)_{x_7}p-(q_4)_{x_5}+(r_6)_{x_4}=0, \qquad (q_3)_{x_4}=(q_4)_{x_7} . \label{ED3a}
 \end{gather}
Equations (\ref{ED1}) and (\ref{ED2a})--(\ref{ED3a}) simplify to
\begin{gather*}
p_{x_6}=-(q_3)_{x_7}=(r_5)_{x_2}=-(r_6)_{x_3}=- \tfrac1{2\sqrt2} (r_7)_{x_4},\\
p_{x_7}= \tfrac 12 (q_2)_{x_3}=\tfrac1{2\sqrt2}(q_3)_{x_4}=\tfrac1{2\sqrt2}(q_4)_{x_7}=-\tfrac13(r_5)_{x_1}=-\tfrac14(r_6)_{x_2}=-(r_7)_{x_3},\\
(r_5)_{x_3}=- p_{x_7}p,\qquad
(r_6)_{x_4}=(q_4)_{x_5}-2\sqrt2 p_{x_7}p=\sqrt2 (q_2)_{x_7},\\
\sqrt2 (r_5)_{x_4}=(r_6)_{x_7}-(r_7)_{x_6}-2\sqrt2 p_{x_6}q_4-p_{x_7}q_3-3p_{x_6}p+(q_3)_{x_5}.
\end{gather*}
These equations imply, in particular, $2p_{x_7 x_7}=(q_2)_{x_3 x_7}=(q_2)_{x_7 x_3}=((q_4)_{x_5}-2\sqrt2 p_{x_7}p)_{x_3}=0$. Now the second part of the proposition follows easily.
\end{proof}
\begin{Remark} In the same way as in Remarks~\ref{R35} and~\ref{R38} we determine the generality of $\fh$-adapted coframes. The functions $ \bar r_5$, $\bar r_6$, $\bar r_7$ can be chosen arbitrarily. This gives 3~functions in 3~variables. Integration of $p_{x_7 x_7}=0$ shows that the choice of $p$ depends on 2~functions in 2~variables. Then $\bar q_3$, $q_4$ and $\bar q_2$ depend on 3 arbitrary functions in 2~variables. Thus a general $\fh$-adapted coframe depends on 3 functions in 3 variables and 5 functions in 2 variables.
\end{Remark}

\begin{Example}
Starting with $p(x_5,x_6,x_7)=x_6x_7$, $\bar q_2(x_5,x_6,x_7)=-x_6^2x_7^2$, $\bar q_3(x_5,x_6,x_7)= x_5-\frac 12x_7^2$, $q_4(x_5,x_6,x_7)=2\sqrt2 x_6 x_7$ and $\bar r_5 =\bar r_6=\bar r_7=0$ we get
\begin{gather*}
q_2= 2x_3 x_6-x_6^2x_7^2, \qquad
q_3= 2\sqrt2 x_4 x_6 +x_5- \tfrac12 x_7^2, \\
r_5= -3x_1 x_6+x_2 x_7 -x_3 x_6^2x_7 -2 x_4^2x_6^2 +\tfrac1{\sqrt2}\big(1-x_5x_6-\tfrac{21}2 x_6 x_7^2\big) x_4, \\
r_6= -4x_2 x_6-x_3x_7-2\sqrt2 x_4 x_6^2 x_7, \qquad
r_7= -x_3x_6-2\sqrt2 x_4 x_7 .
\end{gather*}
We want to show that the holonomy equals $\fh$. We proceed as in Example~\ref{Ex37} and compute the following parts of the curvature tensor and its covariant derivative:
\begin{gather*}
R_{56}=h(\diag(2,1), *,*,*),\qquad R_{57}=h(N,*,*,*,),\qquad R_{25}=-h\big(0,0,0,(0,1)^\top\big),\\
(\nabla_{b_5} R)_{56}=-h(0,1,0,*),
\end{gather*}
which generate $\fh$ as a Lie algebra.
\end{Example}

\subsection[Type I 2(c), $i=j=1$]{Type I 2(c), $\boldsymbol{i=j=1}$}
We consider
\begin{gather*}\fh=\Span\big\{X=\diag(2,1), \, h\big(N,0,(0,1)^\top,0\big) \big\}\ltimes\fm(1,1,2).\end{gather*}
The structure equations are
\begin{gather}\label{EI2(c)ij=1}\left(\begin{matrix} db^1\\db^2\\db^3\\db^4\\db^5\\db^6\\db^7\end{matrix}\right) = -\left(
\begin{matrix}
3{\bx} &-\bn&\bu_1&\sqrt2 \bv&0&-\by_1&-\by_2\\
0&2\bx&\bn&\sqrt2 \bu_1&\by_1&0&\bv\\
0&0&\bx&\sqrt 2 \bn&\by_2&-\bv&0\\
0&0&0&0&\sqrt2 \bv&\sqrt2 \bu_1&\sqrt 2 \bn\\
0&0&0&0&-3\bx&0&0\\
0&0&0&0&\bn&-2\bx&0\\
0&0&0&0&-\bu_1&-\bn&-\bx
\end{matrix}\right) \wedge \left(\begin{matrix} b^1\\b^2\\b^3\\b^4\\b^5\\b^6\\b^7\end{matrix}\right).
\end{gather}
\begin{Proposition} The holonomy of $\big(M^{4,3},g\big)$ is contained in the Lie algebra $\fh$ of Type~I~$2(c)$, $i=j=1$ if and only if we can introduce coordinates $(x_1,\dots, x_7)$ such that $g=2\big(b^1\cdot b^5+b^2\cdot b^6+b^3\cdot b^7\big)- \big(b^4\big)^2$ for
\begin{gather*}
b^1= dx_1 + r_5(x_1,\dots, x_7)dx_5+r_6(x_2,\dots,x_7)dx_6+r_7(x_3,\dots,x_7)dx_7, \\
b^2= dx_2+q_2(x_3,\dots,x_7)dx_6 +s(x_4,\dots,x_7)dx_7, \\
b^3= dx_3+q_3(x_4,\dots,x_7) dx_6, \qquad
b^4= dx_4+q_4(x_5,x_6,x_7)dx_6, \\
b^k= dx_k,\qquad k=5,6,7 ,
\end{gather*}
where $q_2$, $q_3$, $s$, $r_5$, $r_6$, $r_7$ are of the form
\begin{gather*}
q_2 = \tfrac {\sqrt2} 3ax_3+\tfrac {\sqrt2} 3 a_{ x_7}x_4^2 +\sqrt2 b x_4+\bar q_2,\qquad
q_3 = \tfrac 23 a x_4+\bar q_3,\qquad
s = -\tfrac13 a x_4+\bar s, \\
r_5 = -\tfrac 1{\sqrt 2} a x_1 -\tfrac 23 a_{x_7} x_2 x_4 -b x_2 +\tfrac 1{3\sqrt2} a_{x_7} x_3^2 +\tfrac{\sqrt2}3 a_{x_7 x_7 }x_3 x_4^2+\big( \tfrac13 a_{x_6} +\sqrt2 b_{x_7}\big) x_3 x_4 \\
\hphantom{r_5 =}{} -\big( \tfrac1{\sqrt2} (q_4)_{x_5} + \bar s_{x_6} - (\bar q_2)_{x_7} +\tfrac13 a q_4\big) x_3 +\tfrac1{9\sqrt2} a_{x_7 x_7 x_7} x_4^4 \\
\hphantom{r_5 =}{}+\big(\tfrac{\sqrt2}3 b_{x_7 x_7}-\tfrac19 a_{x_6 x_7}\big) x_4^3 +\big( {-}\bar s_{x_6 x_7}+(\bar q_2)_{x_7 x_7} +\tfrac13 a_{x_7}q_4 -\tfrac59 a^2 -b_{x_6}\big) x_4^2 \\
\hphantom{r_5 =}{} +\big( \tfrac1{\sqrt2} (\bar r_6)_{x_7}-(\bar r_7)_{x_6} -\bar s_5 +(\bar q_3)_{x_5}) +2b q_4 -\tfrac13 \bar q_3 a +\tfrac23 \bar s\big) x_4 +\bar r_5,\\
r_6=-\tfrac{2\sqrt2}3 a x_2 +\tfrac43 a_{x_7} x_3 x_4 +2b x_3 +\tfrac49 a_{x_7 x_7} x_4^3 +\big(\tfrac{\sqrt2}3 a_{x_6} +2b_{x_7}\big) x_4^2 \\
\hphantom{r_6=}{}+\big( 2\sqrt2 (-\bar s_{x_6} +(\bar q_2)_{x_7}-\tfrac13 a q_4) -(q_4)_{x_5}\big) x_4 + \bar r_6,\\
r_7=-\tfrac{\sqrt2}3 a x_3 +\tfrac{2\sqrt2}3 a_{x_7} x_4^2 +2\sqrt2 b x_4 +\bar r_7
\end{gather*}
for arbitrary functions $\bar s=\bar s(x_5,x_6,x_7)$, $\bar q_k=\bar q_k(x_5,x_6,x_7)$, $k=2,3$, $q_4=q_4(x_5,x_6,x_7)$, $\bar r_l=\bar r_l(x_5,x_6,x_7)$, $l=5,6,7$ and $a:= (q_4)_{x_7}$, $b:= (\bar q_3)_{x_7}$.
\end{Proposition}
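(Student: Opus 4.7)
The plan is to follow the template established by Propositions~\ref{Lg}, \ref{prop2.2}, \ref{P36} and~\ref{P2ci}, adapting the arguments to the present Lie algebra $\fh$. For the forward direction one assumes the holonomy of $g$ lies in $\fh$, takes a local section $b_1,\dots,b_7$ of the holonomy reduction, and normalizes the coframe iteratively by gauge transformations from the holonomy group. For the converse one computes the connection components $b^i(\nabla b_j)$ via the Koszul formula, translates the condition that the matrix-valued $1$-form $\theta=(b^i(\nabla b_j))$ takes values in $\fh$ into a PDE system on the free coframe functions, and integrates.

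Concretely, $db^5 = 3\bx\wedge b^5$ yields $b^5 = f\,dx_5$, and a gauge by $\exp(tX)$ reduces to $b^5 = dx_5$ with $\bx\in I(dx_5)$. The structure equations~(\ref{EI2(c)ij=1}) then force $db^6\in I(dx_5)$; a gauge by the exponential of a multiple of $h(N,0,(0,1)^\top,0)$ gives $b^6=dx_6$ and $\bn\in I(dx_5,dx_6)$. Crucially, because $\bu_1$ is now a free component of the connection (unlike in Proposition~\ref{P2ci}), a gauge by $\exp h(0,0,(u_1,0)^\top,0)$ eliminates the residual $dx_5$-contribution to $b^7$, producing $b^7=dx_7$ and $\bu_1\in I_0$. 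A gauge by $\exp h(0,v,0,0)$ yields $b^4 = dx_4 + q_4\,dx_6$ with $q_4 = q_4(x_5,x_6,x_7)$ and $\bv\in I_0$. Successive gauges by $\exp h(0,0,0,(0,y_2)^\top)$ and $\exp h(0,0,0,(y_1,0)^\top)$ normalize $b^3$ and $b^2$; here $b^2$ necessarily acquires an extra term $s(x_4,\dots,x_7)\,dx_7$ because $\bv$ contributes to $db^2$, a feature absent in Propositions~\ref{P36} and~\ref{P2ci}. Finally $db^1\in I_0$ puts $b^1$ in the stated form.

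For the converse, the condition that $\theta$ takes values in $\fh$ translates to the equalities $\bx=\tfrac13 b^1(\nabla b_1)=\tfrac12 b^2(\nabla b_2)=b^3(\nabla b_3)$, $\bn=-b^1(\nabla b_2)=b^2(\nabla b_3)=\tfrac1{\sqrt 2}b^3(\nabla b_4)$, $\bu_1=b^1(\nabla b_3)=\tfrac1{\sqrt 2}b^2(\nabla b_4)$, $\bv=\tfrac1{\sqrt 2}b^1(\nabla b_4)=b^2(\nabla b_7)$, together with the vanishing of $b^2(\nabla b_1)$, $b^3(\nabla b_1)$ and $b^3(\nabla b_2)$. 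Inserting the Koszul formulas gives a PDE system with polynomial right-hand sides in $x_1,x_2,x_3,x_4$; introducing the abbreviations $a := (q_4)_{x_7}$ and $b := (\bar q_3)_{x_7}$, the equations integrate explicitly in $x_2,x_3,x_4$ to yield the claimed polynomial expressions for $q_2,q_3,s,r_6,r_7$ and the quartic-in-$x_4$ polynomial for $r_5$, with $q_4,\bar q_2,\bar q_3,\bar s,\bar r_5,\bar r_6,\bar r_7$ appearing as the free integration constants depending only on $(x_5,x_6,x_7)$.

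The chief obstacle is bookkeeping: the coefficients of the high-degree $x_4$-monomials in $r_5$ involve $a_{x_7 x_7}$, $a_{x_7 x_7 x_7}$, $a_{x_6 x_7}$, $b_{x_7 x_7}$ as well as cross terms such as $a\,q_4$ and $a^2$, so verifying mutual consistency of the integrations in $x_3$ and $x_4$ requires careful organization. In contrast to Proposition~\ref{P2ci}, however, no residual differential constraint on the free functions survives: a direct computation shows that all resulting integrability conditions reduce to identities that are tautological once $a=(q_4)_{x_7}$ and $b=(\bar q_3)_{x_7}$ are substituted. Heuristically this reflects the fact that $\fh$ is the largest Lie algebra among the three subcases of Type~I~2(c), so the coframe has enough gauge freedom to absorb all integrability obstructions.
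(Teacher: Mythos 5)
Your proposal follows the same route as the paper's proof: the identical sequence of coordinate introductions and gauge transformations in the forward direction, the same translation of the condition $\theta\in\fh$ into the equalities $\bx=\tfrac13 b^1(\nabla b_1)=\tfrac12 b^2(\nabla b_2)=b^3(\nabla b_3)$, etc., and the same conclusion that the resulting system integrates in $x_2,x_3,x_4$ with no residual differential constraints on the free functions of $(x_5,x_6,x_7)$. One justification is wrong, however: the extra term $s\,dx_7$ in $b^2$ is not caused by the $\bv$-contribution to $db^2$. The term $\bv\wedge b^7$ with $\bv\in I_0$ lies in $I_1\subset I(dx_5,dx_6)$, and indeed $\bv$ already contributes to $db^2$ in the case $i=1$, $j=0$ of Proposition~\ref{P2ci}, where no $dx_7$-term in $b^2$ is needed; your reasoning applied there would give the wrong normal form. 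What actually forces $s$ here is the new term $\sqrt2\,\bu_1\wedge b^4$: since $\bu_1\in I_0$ may have a $dx_7$-component, this produces $dx_{(4,7)}$-terms in $db^2$ that lie in $I_0$ but not in $I(dx_5,dx_6)$, and these are absorbed by $ds\wedge dx_7$ with $s$ depending on $x_4$. This misattribution does not derail the rest of the argument, since carrying out the computation would reveal the correct source, but it should be fixed.
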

\begin{proof} We proceed as in the proof of Proposition~\ref{P2ci} and obtain now $b^k=dx_k$, $k=5,6,7$, and $\bx\in I(dx_5)$, $\bn\in I(dx_5,dx_6)$, $\bu_1\in I_0$. Moreover, as above, $b^4=dx_4+q_4 dx_6$, which implies
\begin{gather*}db^4=dq_4\wedge dx_6=\sqrt2 (-\bv\wedge dx_5-\bu_1\wedge dx_6 -\bn\wedge dx_7).\end{gather*}
Hence $q_4=q_4(x_5,x_6,x_7)$ and $\bv\in I_0$. Furthermore, we have $db^3\in I(dx_5,dx_6)$ and can choose $x_3$ such that $b^3=dx_3+q_3dx_6$. As in the proof of Proposition~\ref{P2ci}, we see that $q_3=q_3(x_4,\dots,x_7)$ and $\by_2\in I(dx_3,\dots,dx_7)$. Now (\ref{EI2(c)ij=1}) gives $db^2\in I_0$. Thus $b^2=dx_2+q_2dx_6+sdx_7$. Since
\begin{gather*}
dq_2\wedge dx_6+ds\wedge dx_7 = db^2=-2\bx\wedge b^2 -\bn\wedge b^3-\sqrt2 \bu_1\wedge b^4-\by_1\wedge dx_5+\bv\wedge dx_7\\
\hphantom{dq_2\wedge dx_6+ds\wedge dx_7}{} \in I\big(dx_{(2,5)}, dx_{(3,5)}, dx_{(3,6)}, dx_{(4,5)}, dx_{(4,6)},dx_{(4,7)}, \by_1\wedge dx_5\big)+I_1,
\end{gather*}
we obtain $q_2=q_2(x_3,\dots,x_7)$, $s=s(x_4,\dots,x_7)$ and $\by_1\in I(dx_2,\dots,dx_7)$. Finally, (\ref{EI2(c)ij=1}) shows that $db^1\in I_0$. Thus we can choose $b^1=dx_1+ r_5dx_5+r_6dx_6+r_7dx_7$. Then $db^1$ equals
\begin{gather*}
dr_5\wedge dx_5+dr_6\wedge dx_6+dr_7\wedge dx_7\in I( dx_{(1,5)}, dx_{(2,5)}, dx_{(2,6)})+ dx_3\wedge I_0 +dx_4\wedge I_0+I_1,
\end{gather*}
hence $r_5$, $r_6$, $r_7$ are as claimed.

We proceed as in the proof of Proposition~\ref{Lg}. The equation $\bx=\frac13 b^1(\nabla b_1)=\frac12 b^2(\nabla b_2)=b^3(\nabla b_3)$ is equivalent to
\begin{gather}
\tfrac13(r_5)_{x_1} = \tfrac14(r_6)_{x_2} =\tfrac12 (r_7)_{x_3}.\label{ED1b}
\end{gather}
Furthermore, $\bn=-b^1(\nabla(b_2))=b^2(\nabla b_3)=\frac1{\sqrt 2} b^3(\nabla b_4)$ is equivalent to
\begin{gather}
 -(r_5)_{x_2} =\tfrac12 (r_6)_{x_3}=\tfrac1{2\sqrt 2} (r_7)_{x_4},\label{ED2b}\\
- \tfrac12 (r_6)_{x_2} =(q_2)_{x_3} =\tfrac1{2\sqrt 2}\big((q_4)_{x_7}+s_{x_4}+(q_3)_{x_4}\big), \label{E60}
\end{gather}
and $\bu_1= b^1(\nabla b_3) =\frac 1{\sqrt2}b^2(\nabla b_4 )$ is equivalent to
\begin{gather}
 -2\sqrt2 (r_5)_{x_3}=(q_4)_{x_5} -(r_6)_{x_4},\label{E61}\\
 (r_6)_{x_3}=\sqrt2 (q_2)_{x_4}, \label{E62}\\
 \sqrt2(r_7)_{x_3}= -(q_4)_{x_7} +s_{x_4}+(q_3)_{x_4} . \label{ED3b}
 \end{gather}

Finally, $\bv=\frac1{\sqrt2} b^1(\nabla b_4)=b^2(\nabla b_7)$ is equivalent to
\begin{gather}
 (q_3)_{x_4}=(q_4)_{x_7}+s_{x_4}, \label{E64}\\
 \sqrt2(r_5)_{x_4}=(r_6)_{x_7} -(r_7)_{x_6}+(r_7)_{x_4}q_4+(r_7)_{x_3}q_3-(r_6)_{x_2}s - s_{x_5}+(q_3)_{x_5},\label{E65}\\
 (q_4)_{x_5}+(r_6)_{x_4} = 2\sqrt2 \big({-}s_{x_6}+ (q_2)_{x_7} +s_{x_4}q_4\big), \label{E66} \\
 (r_7)_{x_4} = 2\sqrt2 (q_3)_{x_7} . \label{E67}
\end{gather}
Equations (\ref{ED1b}), (\ref{E60}), (\ref{ED3b}) and (\ref{E64}) are equivalent to
\begin{gather}\label{E1'}
\tfrac13 (r_5)_{x_1} = \tfrac14 (r_6)_{x_2}=\tfrac12 (r_7)_{x_3} =-\tfrac12 (q_2)_{x_3}=-\tfrac 1{2\sqrt 2} (q_3)_{x_4}=-\tfrac1{3\sqrt2} (q_4)_{x_7}=\tfrac 1{\sqrt2} s_{x_4}.
\end{gather}
Equations (\ref{ED2b}), (\ref{E62}) and (\ref{E67}) are equivalent to
\begin{gather}\label{E*}
-(r_5)_{x_2}=\tfrac12 (r_6)_{x_3}=\tfrac1{\sqrt 2}(q_2)_{x_4}=\tfrac1 {2\sqrt2}(r_7)_{x_4} =(q_3)_{x_7} .
\end{gather}
Hence we have to solve the system consisting of equations (\ref{E61}), (\ref{E65}), (\ref{E66}), (\ref{E1'}) and (\ref{E*}). This is done in a straightforward way starting with (\ref{E1'}) and (\ref{E*}), continuing with (\ref{E66}) and finishing with (\ref{E61}) and (\ref{E65}).
The result shows that $q_k$, $k=2,3,4$, $s$ and $r_l$, $l=5,6,7$ have the claimed form. \end{proof}

\begin{Remark} Proceeding similarly as in the previous cases we see that a general $\fh$-adapted coframe depends on 7~functions in 3~variables.
\end{Remark}

\begin{Example} Choosing $q_4(x_5,x_6,x_7)=x_6 x_7$, $\bar q_3(x_5,x_6,x_7)=x_6 x_7+\frac {\sqrt2}3 x_7^2$ and $\bar s=\bar q_2=\bar r_5 =\bar r_6 =\bar r_7=0$ we obtain
\begin{gather*}
b(x_6,x_7) = x_6+\tfrac{2\sqrt2}3 x_7,\qquad
s(x_4,\dots,x_7)= -\tfrac13 x_4x_6,\\
q_2(x_3,\dots,x_7)= \tfrac{ \sqrt2}3 x_3 x_6 +\sqrt2 b(x_6,x_7)x_4 ,\\
r_5(x_1,\dots,x_7)= -\tfrac1{\sqrt2} x_1 x_6-b(x_6 ,x_7)x_2 +\tfrac 53 x_3 x_4 -\tfrac 13 x_3 x_6^2 x_7-\tfrac59 x_4^2 x_6^2 -x_4^2 \\
\hphantom{r_5(x_1,\dots,x_7)=}{} +\tfrac 53 x_4 x_6^2 x_7 +\tfrac {11}9 \sqrt2 x_4 x_6 x_7^2,\\
r_6(x_2,\dots,x_7)= -\tfrac{2\sqrt2}3x_2 x_6 +2b(x_6, x_7)x_3 +\tfrac {5\sqrt2}3 x_4^2 -\tfrac{2\sqrt2}3x_4 x_6^2 x_7, \\
r_7(x_3,\dots,x_7)= -\tfrac{\sqrt2}3 x_3 x_6 +2\sqrt2 b(x_6, x_7) x_4.
\end{gather*}
One computes
\begin{gather*} R_{56}=\tfrac 1{3\sqrt2} X-h\big(N,0,(0,1)^\top,0\big),\qquad R_{57}=-\tfrac{2\sqrt2}3 h\big(N,0,(0,1)^\top,0\big),\\ R_{45}=h\big(0,-\sqrt2,\big(\tfrac53,0\big)^\top,0\big). \end{gather*}
These operators generate $\fh$ as a Lie algebra. Hence the holonomy is equal to $\fh$.
\end{Example}

\subsection{Type I 3(b)}
We consider
\begin{gather*}\fh=\Span\big\{ h\big(\diag(1,0),\, 0,(0,1)^\top,0\big) \big\}\ltimes\fm(1,1,2).\end{gather*}
The structure equations are
\begin{gather}\label{EI3(b)}\left(\begin{matrix} db^1\\db^2\\db^3\\db^4\\db^5\\db^6\\db^7\end{matrix}\right) = -\left(
\begin{matrix}
{\bf a}_1&- {\bf a}_1&{\bf u}_1&\sqrt{2} {\bf v}&0&-\by_1&-\by_2\\
0&{\bf a}_1&0&\sqrt{2} {\bf u}_1&\by_1&0&{\bf v}\\
0&0&0& \sqrt{2} {\bf a}_1&\by_2&-{\bf v}&0\\
0&0&0&0&\sqrt{2} {\bf v}&\sqrt{2} {\bf u}_1&\sqrt 2 {\bf a}_1\\
0&0&0&0&-{\bf a}_1&0&0\\
0&0&0&0&{\bf a}_1&-{\bf a}_1&0\\
0&0&0&0&-{\bf u}_1&0&0
\end{matrix}\right) \wedge \left(\begin{matrix} b^1\\b^2\\b^3\\b^4\\b^5\\b^6\\b^7\end{matrix}\right),
\end{gather}
where bold symbols denote 1-forms.
\begin{Proposition} The holonomy of $\big(M^{4,3},g\big)$ is contained in the Lie algebra $\fh$ of Type~I~$3(b)$ if and only if there are local coordinates $x_1,\dots,x_7$ such that $g=2\big(b^1\cdot b^5+b^2\cdot b^6+b^3\cdot b^7\big)- \big(b^4\big)^2$ for
\begin{gather*}
b^1= dx_1 + r_5(x_1,\dots, x_7)dx_5+r_6(x_2,x_4,x_5, x_6,x_7)dx_6+r_7(x_4,x_5,x_6,x_7)dx_7, \\
b^i= dx_i+q_i(x_5,x_6,x_7)dx_6,\qquad i=2,3, \\
b^6= dx_6 + p(x_5,x_6) dx_5 ,\qquad
b^j= dx_j,\qquad j=4,5,7,
\end{gather*}
where
$p=p(x_5,x_6)$ and $q_2=q_2(x_5,x_6,x_7)$ are arbitrary and the functions $q_3$, $r_5$, $r_6$, $r_7$ are of the form
\begin{gather*}
q_3 = -p_{x_6}x_7+\bar q_3,\\
r_5 = -p_{x_6} x_1 +p_{x_6}(1-p)x_2 +(q_2)_{x_7}x_3+\tfrac1{\sqrt2}\big(3(q_2)_{x_7}p+(q_3)_{x_5}+(\hat r_6)_{x_7}-(\hat r_7)_{x_6}\big)x_4 \\
\hphantom{r_5 =}{} +\big((q_2)_{x_7 x_7}+p_{x_6 x_6}\big)x_4^2 +\hat r_5, \\
r_6 = -p_{x_6}x_2 +2\sqrt2 (q_2)_{x_7} x_4+\hat r_6,\qquad
r_7 = -2\sqrt2 p_{x_6}x_4 +\hat r_7.
\end{gather*}
for arbitrary functions $\bar q_3=\bar q_3(x_5,x_6)$ and $\hat r _j =\hat r_j(x_5,x_6,x_7)$, $j=5,6,7$.
\end{Proposition}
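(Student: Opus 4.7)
The plan is to follow the two-step pattern used in Propositions \ref{Lg}, \ref{prop2.2}, \ref{P36}, and \ref{P2ci}. The forward direction uses the structure equations (\ref{EI3(b)}) to successively adapt the local section $b_1,\dots,b_7$ of the holonomy reduction, producing the claimed coordinate normal form. The reverse direction applies the Koszul formula to rewrite the condition that the connection form $\theta=(b^i(\nabla b_j))$ take values in $\fh$ as the stated PDE system, whose general solution has the displayed form.

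For the forward step, I first introduce $x_5$ so that $b^5=f\,dx_5$, and absorb $f$ by transforming the frame by $\exp(t\cdot h(\diag(1,0),0,(0,1)^\top,0))$ for a suitable local function $t$; this yields $b^5=dx_5$ and forces $\mathbf{a}_1\in I(dx_5)$. Integrating $db^6\in I(dx_5)$ then gives $b^6=dx_6+p\,dx_5$ with $p=p(x_5,x_6)$. A transformation $\exp h(0,0,(u_1,0)^\top,0)$ arranges $b^7=dx_7$, after which (\ref{EI3(b)}) forces $\mathbf{u}_1\in I_0$; similarly $\exp h(0,v,0,0)$ yields $b^4=dx_4$ and $\mathbf{v}\in I_0$. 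Next, successive transformations by $\exp h(0,0,0,(0,y_2)^\top)$ and $\exp h(0,0,0,(y_1,0)^\top)$ bring $b^2,b^3$ to the form $b^i=dx_i+q_i\,dx_6$ with $q_i=q_i(x_5,x_6,x_7)$ (read off from $dq_i\wedge dx_6=db^i$ together with the ideal containments above). Finally $db^1\in I_0$ together with the refined ideal membership forces $b^1=dx_1+r_5\,dx_5+r_6\,dx_6+r_7\,dx_7$ with $r_6$ independent of $x_1,x_3$ and $r_7$ independent of $x_1,x_2,x_3$, matching the claim.

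For the backward step, I compute $b^i(\nabla b_j)$ via Koszul. Reading the matrix in (\ref{EI3(b)}), the condition $\theta\in\fh\otimes\Omega^1$ amounts to $b^2(\nabla b_1)=b^3(\nabla b_1)=0$, $b^1(\nabla b_1)=b^2(\nabla b_2)=\tfrac{1}{\sqrt{2}}b^3(\nabla b_4)=-b^1(\nabla b_2)$, $b^3(\nabla b_3)=b^3(\nabla b_2)=b^2(\nabla b_3)=0$, $b^1(\nabla b_3)=\tfrac{1}{\sqrt{2}}b^2(\nabla b_4)$, and $b^1(\nabla b_4)=\sqrt{2}\,b^2(\nabla b_7)$, together with identities automatic from the adapted coframe. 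Translating through Koszul gives a first-order PDE system in $p,q_2,q_3,r_5,r_6,r_7$ that I solve stepwise: the relation $b^1(\nabla b_1)=b^2(\nabla b_2)$ together with $b^i(\nabla b_1)=0$ pins down the $x_1,x_2$-dependence of $r_5$ and $r_6$ in terms of $p_{x_6}$; the relation $b^3(\nabla b_4)=\sqrt{2}\,b^1(\nabla b_1)$ extracts the leading $x_4$-dependence of $r_7$; the $\mathbf{u}_1$- and $\mathbf{v}$-conditions then determine the remaining polynomial dependences of $r_5$ and $r_6$ on $x_3,x_4$ with $p,q_2,\bar q_3,\hat r_6,\hat r_7$ as sources, while $q_3=-p_{x_6}x_7+\bar q_3(x_5,x_6)$ drops out of the equation for $\nabla b_3$.

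The main obstacle will be the compatibility of the two expressions for $(r_5)_{x_4}$ coming from the $\mathbf{u}_1$-condition ($b^1(\nabla b_3)\propto b^2(\nabla b_4)$) and the $\mathbf{v}$-condition ($b^1(\nabla b_4)\propto b^2(\nabla b_7)$). As in Remarks~\ref{34} and~\ref{R38}, I expect this integrability condition to be satisfied identically modulo the other relations, without imposing any further constraint on $p,q_2,\bar q_3,\hat r_5,\hat r_6,\hat r_7$. In particular, the $x_4^2$-coefficient of $r_5$ should come out to be precisely $(q_2)_{x_7x_7}+p_{x_6x_6}$, namely the $x_4$-primitive of both Koszul-derived expressions for $(r_5)_{x_4}$. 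Once this is checked, the general $\fh$-adapted coframe depends on the two arbitrary functions $p$ and $\bar q_3$ of two variables together with the four arbitrary functions $q_2,\hat r_5,\hat r_6,\hat r_7$ of three variables, as expected from the Cartan--K\"ahler analogue of the preceding cases.
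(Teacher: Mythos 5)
Your proposal is correct and follows essentially the same route as the paper: the same frame-adaptation sequence (normalising $b^5, b^6, b^7, b^4, b^3, b^2, b^1$ in turn by the indicated group elements and reading off the ideal containments) for the forward direction, and the same translation of $\theta\in\fh$ into a Koszul-derived first-order system for the converse, with the $x_4^2$-coefficient of $r_5$, the compatibility of the two expressions for $(\bar r_5)_{x_4}$, and the count of arbitrary functions all coming out exactly as in the paper. The only organisational difference is that the paper shortcuts the converse by specialising the Type~I~2(b) system \eqref{Esysta}--\eqref{Esyste} under the extra conditions $(r_6)_{x_3}=(q_2)_{x_3}=(q_2)_{x_4}=0$ rather than recomputing from scratch; also note that $0=db^7=\bu_1\wedge dx_5$ gives the sharper containment $\bu_1\in I(dx_5)$ (not merely $\bu_1\in I_0$), which is what you actually need to conclude $db^4\in I(dx_5)$.
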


\begin{proof} We can introduce coordinates such that $b^5\in\Span\{dx_5\}$. Transforming the local frame by $\exp h\big(\diag(x,0),0,(0,x)^\top,0\big)$ for a suitable local function $x$ we may assume $b^5=dx_5$. 	 Then ${\bf a}_ 1 \in I(dx_5)$, hence $db^6\in I(dx_5)$ by~(\ref{EI3(b)}). Thus we can introduce $x_6$ such that $b^6=dx_6 +p dx_5$. Hence $p=p(x_5,x_6)$. Furthermore, (\ref{EI3(b)}) shows $db^7\in I(dx_5)$. Hence we can introduce~$x_7$ such that $b^7=dx_7+f_5 dx_5$. Transforming the local frame by $\exp h(0,(u_1,0)^\top,0,0)$ for a suitable local function~$u_1$ we may assume $b^7=dx_7$. By
\begin{gather*}
0 = d b^7 = {\bf u}_1 \wedge b^5,
\end{gather*}
we obtain that ${\bf u}_1 \in I (dx_5)$. Consequently, $db^4 \in I(dx_5)$. Hence we can introduce $x_4$ such that $b^4 = dx_4 + f_4 dx_5$. Transforming the local frame by $\exp h(0, v, 0, 0)$ for a suitable local function~$v$ we may assume $b^4 = dx_4$. By
\begin{gather*}
0 = d b^4 = - \sqrt{2} {\bf v} \wedge b^5 - \sqrt{2} {\bf u}_1 \wedge b^6 - \sqrt{2} {\bf a}_1 \wedge b^7,
\end{gather*}
we obtain that ${\bf v} \in I_0$. Again by (\ref{EI3(b)}), we obtain $db^3 \in I(dx_5,dx_6)$. Transforming the local frame by $\exp h\big(0,0,0,(0,y_2)^\top\big)$ for a suitable function $y_2$ we may assume $b^3 = dx_3 + q_3 dx_6$. Then
\begin{gather*}
db^3 = dq_3\wedge dx_6= -\sqrt2 {\bf a}_1 \wedge dx_4-\by_2\wedge dx_5+\bv\wedge (dx_6+pdx_5)
\in I\big(dx_{(4,5)}, \by_2\wedge dx_5\big)+I_1,\!
\end{gather*}
thus $q_3=q_3(x_5,x_6,x_7)$ and $\by_2\in I(dx_4)+I_0$. Furthermore, (\ref{EI3(b)}) yields $db^2\in I(dx_5,dx_6)$. Transforming the local frame by $\exp h\big(0,0,0,(y_1,0)^\top\big)$ for a suitable function $y_1$ we may assume $b^2=dx_2+q_2 dx_6$, thus
\begin{gather*}
db^2 = dq_2\wedge dx_6 = -{\bf a}_ 1 \wedge b^2 -\sqrt2 \bu_1\wedge dx_4-\by_1\wedge dx_5-\bv\wedge dx_7\\
\hphantom{db^2 = dq_2\wedge dx_6}{} \in I\big(dx_{(2,5)}, dx_{(4,5)}, \by_1\wedge dx_5\big)+I_1.
\end{gather*}
Hence, we get $q_2=q_2(x_5,x_6,x_7)$ and $\by_1\in I(dx_2, dx_4)+I_0$. Finally, we have $db^1\in I_0$, thus $b^1=dx_1+r_5dx_5+r_6dx_6+r_7dx_7$ and (\ref{EI3(b)}) gives
\begin{gather*}
db^1 = r_5\wedge dx_5+r_6\wedge dx_6+r_7\wedge dx_7
 \in I( dx_{(1,5)}, dx_{(2,5)}, dx_{(2,6)}, dx_{(3,5)} )+dx_4\wedge I_0 +I_1.
\end{gather*}

Consequently, $r_5=r_5(x_1,\dots,x_7)$, $r_6=r_6(x_2, x_4, x_5, x_6, x_7)$, $r_7=r_7(x_4,\dots,x_7)$.

Now let the metric $g$ be defined by (\ref{Eip}) with respect to the local coordinates that we considered above. Since the expression for $b^i$ in the local coordinates $(x_1, \ldots, x_7)$ is the same as for the case Type~I~2(b) with $r_6 = r_6 (x_2, x_4, \ldots, x_7)$ and $q_2 = q_2(x_5,x_6,x_7)$, we can proceed as in the proof of Proposition \ref{prop2.2} by imposing the extra conditions coming from \begin{gather*}b^2 (\nabla b_3) = \tfrac{1}{2} \big( (q_2)_{x_3} p - (r_6)_{x_3} \big) b^5 + (q_2)_{x_3} b^6 =0,\end{gather*} and $ (r_6)_{x_3} =(q_2)_{x_3} = (q_2)_{x_4} =0$. From equations (\ref{Esysta})--(\ref{Esyste}) we now obtain the system
\begin{gather*}
(\bar r_6)_{x_4}=2\sqrt 2(q_2)_{x_7}, \qquad
(\bar r_5)_{x_3} = (q_2)_{x_7}, \\
(\bar r_5)_{x_4}=\tfrac 1{\sqrt2}\big((\bar r_6)_{x_7} - (\bar r_7)_{x_6} +3 (q_2)_{x_7}p +(q_3)_{x_5}\big) +2p_{x_6 x_6} x_4.
\end{gather*}
By integrating the first equation we determine $\bar r_6$ up to an arbitrary function $\hat r_6(x_5, x_6,x_7)$ and, by integrating the other two equations, we get the function $\bar r_5$, up to an arbitrary function $\hat r_5(x_5, x_6,x_7)$.
 \end{proof}

\begin{Remark} Proceeding similarly as in the previous cases we see that in this case a general $\fh$-adapted coframe depends on 4~functions in 3~variables and 2~functions in 2~variables.
\end{Remark}
\begin{Example} For \begin{gather*}
p(x_5,x_6)= x_5 x_6^2, \qquad
q_2(x_5,x_6, x_7) = x_7+x_6 x_7,\qquad
q_3(x_5,x_6,x_7) = -2 x_5 x_6 x_7 ,\\
r_5(x_1,\dots, x_7) = -2x_1 x_5 x_6 +2 x_2 x_5 x_6 \big(1- x_5x_6^2\big) + (1 + x_6) x_3 \\
\hphantom{r_5(x_1,\dots, x_7) =}{} + \tfrac1{\sqrt2}\big(3 ( x_6 +1) x_5 x_6^2 - 2 x_6 x_7 \big) x_4 + 2 x_4^2 x_5,\\
r_6(x_2, x_4, \dots, x_7) = -2x_2 x_5 x_6 + 2 \sqrt{2}(x_6 + 1) x_4,\\
r_7(x_4,\dots, x_7) = -4\sqrt 2 x_4 x_5 x_6
\end{gather*}
the holonomy equals $\frak h$. Indeed, the elements
\begin{gather*}
R_{56}=h\big(0,0, (-1,0)^\top,(3,0)^\top\big),\qquad (\nabla_{b_4} R)_{56}= h\big(0,0,0,(0,\sqrt{2})^\top\big),\\
(\nabla_{b_6} R)_{57}= h(0,-1,0,0), \qquad (\nabla_{b_5} R)_{56}= h\big(\diag(2,0),0,(0,2)^\top,0\big)
\end{gather*}
 generate $\fh$ as a Lie algebra, the holonomy algebra coincides with~$\fh$.
\end{Example}

\subsection[Type I 4(b), $j=0$]{Type I 4(b), $\boldsymbol{j=0}$}
We consider
\begin{gather*}\fh=\RR\cdot h\big(N,0,(0,1)^\top,0\big) \ltimes\fm(1,0,2).\end{gather*}

The structure equations are
\begin{gather}\label{EI4(b)}\left(\begin{matrix} db^1\\db^2\\db^3\\db^4\\db^5\\db^6\\db^7\end{matrix}\right) = -\left(
\begin{matrix}
0 &-\bn&0&\sqrt 2 \bv&0&-\by_1&-\by_2\\
0&0&\bn&0&\by_1&0&\bv\\
0&0&0&\sqrt 2 \bn&\by_2&-\bv&0\\
0&0&0&0&\sqrt 2 \bv&0&\sqrt 2 \bn\\
0&0&0&0&0&0&0\\
0&0&0&0&\bn&0&0\\
0&0&0&0&0&-\bn&0
\end{matrix}\right) \wedge \left(\begin{matrix} b^1\\b^2\\b^3\\b^4\\b^5\\b^6\\b^7\end{matrix}\right).
\end{gather}

\begin{Proposition} The holonomy of $\big(M^{4,3},g\big)$ is contained in the Lie algebra $\fh$ of Type~I~$4(b)$ with $j=0$ if and only if there are local coordinates $x_1,\dots,x_7$ such that $g=2\big(b^1\cdot b^5+b^2\cdot b^6+b^3\cdot b^7\big)- \big(b^4\big)^2$ for
\begin{gather*}
b^1=dx_1 + r_5(x_2,x_4,x_5,x_6, x_7)dx_5+r_6(x_3,x_5,x_6,x_7)dx_6+r_7(x_4,x_5,x_6,x_7)dx_7, \\
b^3= dx_3+q(x_5,x_6,x_7) dx_6, \qquad
b^7= dx_7 +p(x_5,x_6)dx_5,\qquad
b^j= dx_j, \qquad j=2,4,5,6,
\end{gather*}
where
$p=p(x_5,x_6)$ is arbitrary and the functions $q$, $r_5$, $r_6$, $r_7$ are of the form
\begin{gather*}
q=-p_{x_6}x_7+\bar q, \\
r_5=p_{x_6}x_2+ \tfrac{\sqrt2}2 \big( (\bar r_6)_{x_7}-(\bar r_7)_{x_6}-3p_{x_6}p-p_{x_6x_5}x_7+\bar q_{x_5}\big)x_4+p_{x_6 x_6}x_4^2+\bar r_5,\\
r_6=-p_{x_6}x_3+\bar r_6, \qquad
r_7=-2\sqrt2 p_{x_6}x_4+\bar r_7.
\end{gather*}
where
$\bar q=\bar q(x_5,x_6)$, $\bar r_j =\bar r_j (x_5,x_6,x_7)$, $j=5,6,7$, are arbitrary.
\end{Proposition}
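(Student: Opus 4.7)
The plan is to follow the methodology developed for Propositions~\ref{Lg}, \ref{prop2.2}, \ref{P36}, and \ref{P2ci} (and for the preceding Type~I~3(b) proposition). The two directions of the biconditional are treated, respectively, by step-wise normalization of the coframe using the structure equations (\ref{EI4(b)}) together with allowed gauges and coordinate changes, and by a direct Koszul computation of the connection forms.

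For the forward direction, assume the holonomy lies in $\fh$ and let $b_1,\dots,b_7$ be a section of the holonomy reduction. I would normalize the coframe in the order $b^5, b^6, b^7, b^4, b^3, b^2, b^1$. The $\fgl(2,\RR)$-component of $\fh$ is $\RR\cdot N$, which is trace-free and has no diagonal part; consequently the structure-equation matrix in (\ref{EI4(b)}) has vanishing diagonal, so $db^5 = 0$ and $b^5 = dx_5$. Next, $db^6 = -\bn\wedge b^5 \in I(dx_5)$ gives $b^6 = dx_6 + \phi\,dx_5$, and a gauge by $\exp(\phi\cdot h(N,0,(0,1)^\top,0))$ removes $\phi$, so $b^6 = dx_6$ and $\bn\in I(dx_5)$. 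Then $db^7 = \bn\wedge b^6 \in I(dx_5,dx_6)$ shows that $\Span\{b^5,b^6,b^7\}$ is integrable; introducing a coordinate $x_7$ adapted to this distribution and applying a coordinate shift $x_7\mapsto x_7+\psi(x_5,x_6)$ yields $b^7 = dx_7 + p(x_5,x_6)\,dx_5$, where the dependence of $p$ on only $x_5,x_6$ follows by matching $db^7$ with $\bn\wedge b^6$ and using $\bn\in I(dx_5)$. Continuing, the $\bv$-gauge applied using row~4 of (\ref{EI4(b)}) gives $b^4 = dx_4$ and $\bv\in I_0$; row~3 together with a $\by_2$-gauge and a coordinate redefinition produces $b^3 = dx_3 + q\,dx_6$, with $q = q(x_5,x_6,x_7)$ forced by the form of $db^3$; row~2 together with the $\by_1$-gauge yields $b^2 = dx_2$. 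Finally, since all the remaining connection 1-forms lie in $I_0$, we have $db^1 \in I_0$ and hence $b^1 = dx_1 + r_5\,dx_5 + r_6\,dx_6 + r_7\,dx_7$, with the stated variable-dependencies of $r_5, r_6, r_7$ read off from (\ref{EI4(b)}).

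For the reverse direction, I would substitute the candidate coframe into the Koszul formula, compute the nine relevant connection 1-forms $b^i(\nabla b_j)$, and impose that the matrix $\theta=(b^i(\nabla b_j))$ take values in $\fh$. The resulting conditions split into four groups: the vanishing equations coming from the trivial diagonal, namely $b^1(\nabla b_1) = b^2(\nabla b_2) = b^3(\nabla b_3) = 0$ and $b^i(\nabla b_1)=0$ for $i=2,3$; the $\bn$-equation $-b^1(\nabla b_2) = b^2(\nabla b_3) = \tfrac{1}{\sqrt{2}} b^3(\nabla b_4)$; the vanishing of the $\bu_1$-component (which, since $\bu_1\equiv 0$ in this $\fh$, forces in particular $b^1(\nabla b_3)=b^2(\nabla b_4)=0$); and the $\bv$-equation $\tfrac{1}{\sqrt{2}} b^1(\nabla b_4) = b^2(\nabla b_7)$. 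Integrating the simplest equations first, for example $q_{x_7} = -p_{x_6}$ yields $q = -p_{x_6}x_7 + \bar q(x_5,x_6)$ and $(r_7)_{x_4} = -2\sqrt{2}\,p_{x_6}$ gives the leading term of $r_7$, and then working through the more involved $\bv$-equation, one recovers the explicit forms of $q, r_5, r_6, r_7$ stated in the proposition, with $\bar q, \bar r_5, \bar r_6, \bar r_7$ appearing as integration ``constants'' (functions of the remaining variables).

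The main obstacle I anticipate is the bookkeeping in the normalization procedure: the $\bn$- and $\bv$-gauges are consumed in normalizing $b^6$ and $b^4$ respectively, so the later simplifications of $b^3$ and $b^2$ must rely only on the $\by_1, \by_2$ gauges together with residual coordinate freedom, and at each step one must check that the operation preserves the earlier normalizations. Secondly, before integrating the $\bv$-equation that determines~$r_5$, one must verify a mixed-partial compatibility condition between its $x_4$-derivative and the cross-derivatives of the right-hand side, analogous to the computation carried out in Remark~\ref{34}; this should reduce to an automatic identity once the equations determining $q, r_6, r_7$ have been substituted in.
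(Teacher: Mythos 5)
Your proposal is correct and follows essentially the same route as the paper: normalise the coframe in the order $b^5,b^6,b^7,b^4,b^3,b^2,b^1$ using the structure equations (\ref{EI4(b)}) together with the available gauge transformations and coordinate changes, then impose that the Koszul connection form takes values in $\fh$ and integrate the resulting first-order system. The one point to tighten is after setting $b^4=dx_4$: the equation $0=db^4=-\sqrt2\,\bv\wedge b^5-\sqrt2\,\bn\wedge b^7$ yields the sharper membership $\bv\in I(dx_5,dx_7)$ rather than merely $\bv\in I_0$, and this is what you actually need two steps later, since a $dx_6$-component of $\bv$ would contribute a $dx_{(6,7)}$-term to $db^2=-\bn\wedge b^3-\by_1\wedge b^5-\bv\wedge b^7$ that the $\by_1$-gauge (which only shifts $b^2$ by multiples of $b^5$) cannot absorb, obstructing the normalisation $b^2=dx_2$.
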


\begin{proof} We proceed as in the previous cases integrating the structure equations (\ref{EI4(b)}). We start by choosing $b^5=dx_5$, $b^6=dx_6$, which implies $\bn\in I(dx_5)$. Moreover, we choose $b^7= dx_7 +p(x_5,x_6)dx_5$, $b^4=dx_4$, thus $\bv\in I(dx_5,dx_7)$. Furthermore, we may choose $b^3= dx_3+qdx_6$. Then
\begin{gather*}dq\wedge dx_6=-\sqrt 2 \bn\wedge b^4 -\by_2\wedge b^5+\bv\wedge b^6 \in I\big(dx_{(4,5)},\by_2\wedge dx_5\big)+I_1, \end{gather*}
which yields $q=q(x_5,x_6,x_7)$ and $\by_2\in I(dx_4)+I_0$. Next we choose $b^2=dx_2$, which gives $\by_1\in I(dx_3)+I_0$. Finally, $b^1= dx_1 + r_5dx_5+r_6dx_6+r_7dx_7$. Then
\begin{gather*}
dr_5\wedge dx_5+dr_6\wedge dx_6+dr_7\wedge dx_7 = \bn\wedge b^2 -\sqrt 2 \bv\wedge b^4+\by_1\wedge b^6+\by_2\wedge b^7\\
\hphantom{dr_5\wedge dx_5+dr_6\wedge dx_6+dr_7\wedge dx_7}{} \in I\big(dx_{(2,5)}, dx_{(3,6)}, dx_{(4,5)}, dx_{(4,7)}\big) +I_1,
\end{gather*}
hence $r_5=r_5(x_2,x_4,x_5,x_6, x_7)$, $r_6=r_6(x_3,x_5,x_6,x_7)$, $r_7=r_7(x_4,x_5,x_6,x_7)$.

Now let the metric $g$ be defined by (\ref{Eip}) with respect to the local coordinates that we considered above. Then~$\nabla b_1=0$ and
\begin{gather*}
\nabla b_2= (r_5)_{x_2} b^5\otimes b_1,\\
\nabla b_3= \tfrac12 \big(p_{x_6}+(r_6)_{x_3}\big)b^6\otimes b_1 - \tfrac12 \big(p_{x_6}-(r_6)_{x_3}\big)b^5\otimes b_2,\\
\nabla b_4= \big( \big((r_5)_{x_4}-(r_7)_{x_4}p\big)b^5+ \tfrac12 (r_7)_{x_4}b^7\big)\otimes b_1 + \tfrac12 (r_7)_{x_4}b^5\otimes b_3,\\
b^2(\nabla b_7)= \tfrac12 \big( (r_6)_{x_7}-(r_7)_{x_6}-q_{x_7}p+q_{x_5}\big)b^5+q_{x_7}b^7.
\end{gather*}
Hence the holonomy of $g$ is contained in $\fh$ if and only if
\begin{gather*}
(r_5)_{x_2}=p_{x_6},\qquad (r_6)_{x_3}=-p_{x_6},\qquad 2\sqrt2 q_{x_7}=(r_7)_{x_4}=-2\sqrt2 p_{x_6},\\
(r_5)_{x_4}-(r_7)_{x_4}p= \tfrac{\sqrt2}2 \big((r_6)_{x_7}-(r_7)_{x_6}-q_{x_7}p+q_{x_5}\big),
\end{gather*}
which is equivalent to the assertion.
\end{proof}
\begin{Remark} Proceeding similarly as in the previous cases we see that a general $\fh$-adapted coframe depends on 2~functions in 2~variables and 3~functions in 3~variables.
\end{Remark}
\begin{Example} Starting with $p(x_5,x_6)=x_5 x_6+x_6^2$, $\bar q=\bar r_5=0$, $\bar r_6(x_5,x_6,x_7)=x_5^2$ and $\bar r_7(x_5,x_6,x_7)=x_6^2$ we obtain
\begin{gather*}
q(x_5,x_6,x_7) = -(x_5+2x_6) x_7 ,\\
r_5(x_2,x_4,\dots, x_7) = (x_5+2x_6)x_2- \tfrac1{\sqrt2} \big(2x_6+3x_5^2x_6+9x_5x_6^2+6x_6^3+x_7\big) x_4 +2 x_4^2,\\
r_6(x_3,x_5,x_6,x_7)= -(x_5+2x_6) x_3+x_5^2,\qquad
r_7(x_4,\dots, x_7)= -2\sqrt 2(x_5+2x_6)x_4+ x_6^2 .
\end{gather*}
Then one computes
\begin{gather*}R_{56}=h\big(2N,*,(0,2)^\top,*\big),\qquad R_{57}=- \tfrac12 h(0,1,0,0),\qquad R_{36}= h\big(0,0,0,(2,0)^\top\big).\end{gather*}
Since these elements generate $\fh$ as a Lie algebra, the holonomy algebra coincides with~$\fh$.
\end{Example}

\subsection[Type I 4(b), $j=1$]{Type I 4(b), $\boldsymbol{j=1}$} \label{S3.9}
We consider
\begin{gather*}\fh=\RR\cdot h\big(N,0,(0,1)^\top,0\big) \ltimes\fm(1,1,2).\end{gather*}

The structure equations are
\begin{gather}\label{EI4(b)j=1}\left(\begin{matrix} db^1\\db^2\\db^3\\db^4\\db^5\\db^6\\db^7\end{matrix}\right) = -\left(
\begin{matrix}
0 &-\bn&\bu_1&\sqrt 2 \bv&0&-\by_1&-\by_2\\
0&0&\bn&\sqrt2 \bu_1&\by_1&0&\bv\\
0&0&0&\sqrt 2 \bn&\by_2&-\bv&0\\
0&0&0&0&\sqrt 2 \bv&\sqrt2 \bu_1&\sqrt 2 \bn\\
0&0&0&0&0&0&0\\
0&0&0&0&\bn&0&0\\
0&0&0&0&-\bu_1&-\bn&0
\end{matrix}\right) \wedge \left(\begin{matrix} b^1\\b^2\\b^3\\b^4\\b^5\\b^6\\b^7\end{matrix}\right).
\end{gather}
\begin{Proposition} The holonomy of $\big(M^{4,3},g\big)$ is contained in the Lie algebra $\fh$ of Type~I~$4(b)$ with $j=1$
if and only if there are local coordinates $x_1,\dots,x_7$ such that $g=2\big(b^1\cdot b^5+b^2\cdot b^6+b^3\cdot b^7\big)- \big(b^4\big)^2$ for
\begin{gather*}
b^1= dx_1 + r_5(x_2,\dots, x_7)dx_5+r_6(x_3,\dots,x_7)dx_6+r_7(x_4,\dots,x_7)dx_7, \nonumber \\
b^2= dx_2 +q_2(x_4,x_5,x_6,x_7) dx_6, \qquad
b^3= dx_3+q_3(x_5,x_6,x_7) dx_6 ,\\ %\label{Ebs23} \\
b^j= dx_j, \qquad j=4,5,6,7,
\end{gather*}
where $q_3$ is arbitrary and the functions $q_2$ and $r_5$, $r_6$, $r_7$ are of the form
\begin{gather*}
q_2 = \sqrt2 (q_3)_{x_7}x_4 +\bar q_2,\\
r_5 = -(q_3)_{x_7}x_2+\sqrt2 (q_3)_{x_7 x_7}x_3 x_4+(\bar q_2)_{x_7}x_3+ \tfrac{\sqrt2}3(q_3)_{x_7 x_7 x_7}x_4^3+\big((\bar q_2)_{x_7 x_7}-(q_3)_{x_6 x_7}\big)x_4^2 \\
 \hphantom{r_5 =}{} + \tfrac1 {\sqrt2}\big((\bar r_6)_{x_7}-(\bar r_7)_{x_6}+(q_3)_{x_5}\big)x_4+\bar r_5, \\
r_6 = 2(q_3)_{x_7}x_3+2(q_3)_{x_7 x_7}x_4^2+2\sqrt2(\bar q_2)_{x_7} x_4+\bar r_6, \qquad
r_7 = 2\sqrt2 (q_3)_{x_7}x_4+\bar r_7,
\end{gather*}
where $\bar q_2=\bar q_2(x_5,x_6,x_7)$, $\bar r_j =\bar r_j (x_5,x_6,x_7)$, $j=5,6,7$, are arbitrary.
\end{Proposition}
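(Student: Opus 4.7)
The proof follows the same template as the preceding propositions---most directly the Type~I 4(b) case with $j=0$ just above---and splits into the usual two implications. For the forward direction, I would integrate \eqref{EI4(b)j=1} row by row, starting from a local section $b_1,\dots,b_7$ in the reduction of the frame bundle to the corresponding group. Since $db^5=0$, introduce $x_5$ with $b^5=dx_5$. The equation $db^6=-\bn\wedge dx_5$ brings $b^6$ into the form $dx_6+p\,dx_5$, and the gauge transformation $\exp\bigl(p\,h(N,0,(0,1)^\top,0)\bigr)$ shifts $b^6$ by $p\,dx_5$ to give $b^6=dx_6$, forcing $\bn=g_1\,dx_5$ for some function $g_1$. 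The structure equation for $db^7$ together with the gauge $\exp\bigl(u_1\,h(0,0,(1,0)^\top,0)\bigr)$ yields $b^7=dx_7$ and $\bu_1=g_1\,dx_6+g_2\,dx_5$. Analogously, $\exp\bigl(v\,h(0,1,0,0)\bigr)$ reduces $b^4$ to $dx_4$, which constrains $\bv=g_2\,dx_6+g_1\,dx_7+g_3\,dx_5$.

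The gauges $\exp\bigl(y_2\,h(0,0,0,(0,1)^\top)\bigr)$ and $\exp\bigl(y_1\,h(0,0,0,(1,0)^\top)\bigr)$ then bring $b^3$ and $b^2$ into the normal forms $dx_3+q_3\,dx_6$ and $dx_2+q_2\,dx_6$. Comparing $dx_i\wedge dx_j$ coefficients in the structure equations for $db^3$ and $db^2$ pins down $q_3=q_3(x_5,x_6,x_7)$, the identifications $g_1=(q_3)_{x_7}$, $g_3=(q_3)_{x_5}$, $g_2=(q_2)_{x_7}$, and the key relation $(q_2)_{x_4}=\sqrt 2 (q_3)_{x_7}$, which integrates to $q_2=\sqrt 2(q_3)_{x_7}x_4+\bar q_2(x_5,x_6,x_7)$. (The apparent $dx_4\wedge dx_5$-obstruction from $-\sqrt 2\,\bn\wedge b^4$ is absorbed by a $dx_4$-coefficient of $\by_2$.) Finally, $db^1\in I_0$ yields $b^1=dx_1+r_5\,dx_5+r_6\,dx_6+r_7\,dx_7$ with the stated dependencies; expanding the structure equation for $db^1$ produces a layered system of PDEs for $r_5,r_6,r_7$ that integrates in $x_2,x_3,x_4$ to the polynomial expressions of the proposition, the integration constants being the arbitrary functions $\bar r_5,\bar r_6,\bar r_7$ of $(x_5,x_6,x_7)$.

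For the backward direction, I would compute each $b^i(\nabla b_j)$ via the Koszul formula exactly as in the proofs of Propositions~\ref{Lg} and~\ref{prop2.2}. Membership of $\theta$ in $\fh$ is equivalent to the vanishing of components outside $\fh$ (e.g.\ $b^1(\nabla b_1)=0$ and $b^i(\nabla b_1)=0$ for $i\geq 2$) together with the linkage equations expressing the coupling inside $h(N,0,(0,1)^\top,0)$---namely $-b^1(\nabla b_2)=b^2(\nabla b_3)=\tfrac{1}{\sqrt 2}b^3(\nabla b_4)=\tfrac{1}{\sqrt 2}b^4(\nabla b_7)$---and analogously for $\bu_1$ and $\bv$. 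Each of these reduces to a PDE on $q_2,q_3,r_5,r_6,r_7$ and, after rewriting, recovers precisely the system solved in the forward direction, whose general solution is the polynomial form stated.

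The main technical difficulty is the explicit form of $r_5$, which contains cubic terms in $x_4$ and mixed products such as $x_3 x_4$ with coefficients involving up to third derivatives of $q_3$. Checking that the equations for $(r_5)_{x_i}$ admit a common solution---that the cross-derivatives of the right-hand sides agree---requires repeated use of the earlier identifications of $g_1,g_2,g_3$ with derivatives of $q_3$ together with the integrability relation $(q_2)_{x_4 x_7}=(q_2)_{x_7 x_4}$; this is the same flavour of verification carried out in Remark~\ref{34} and in the proof of Proposition~\ref{P2ci}, but with one additional layer of polynomial nesting.
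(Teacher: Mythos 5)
Your overall strategy coincides with the paper's: integrate the structure equations (\ref{EI4(b)j=1}) row by row, using the gauge transformations you list to normalise the coframe and pin down the coordinate dependencies, then characterise membership of the Levi-Civita connection form in $\fh$ via the Koszul formula and integrate the resulting first-order system in $x_2$, $x_3$, $x_4$. The gauge elements you choose, the absorption of the $dx_{(4,5)}$-obstruction by the $dx_4$-component of $\by_2$, and the identifications $g_1=(q_3)_{x_7}$, $g_2=(q_2)_{x_7}$, $(q_2)_{x_4}=\sqrt2\,(q_3)_{x_7}$ all check out against the paper's computation.

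There is, however, one concrete error: the identification $g_3=(q_3)_{x_5}$ for the $dx_5$-component of $\bv$. The $dx_5\wedge dx_6$-coefficient of the structure equation for $db^3$ only yields $(q_3)_{x_5}=g_3+(\by_2)_6$, where $(\by_2)_6$ is the a priori nonzero $dx_6$-component of $\by_2$; it does not determine $g_3$. The correct value, $g_3=\tfrac12\big((r_6)_{x_7}-(r_7)_{x_6}+(q_3)_{x_5}\big)$, emerges only after combining this with $(\by_1)_7=g_3$ (from the $dx_{(5,7)}$-coefficient of $db^2$) and the $dx_6\wedge dx_7$-coefficient of $db^1$ --- or, as in the paper, directly from the $b^5$-component of $b^2(\nabla b_7)$ in the Koszul computation. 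Since the $dx_4\wedge dx_5$-coefficient of $db^1$ gives $(r_5)_{x_4}=\sqrt2\,g_3$, your shortcut would replace the coefficient $\tfrac1{\sqrt2}\big((\bar r_6)_{x_7}-(\bar r_7)_{x_6}+(q_3)_{x_5}\big)$ of the linear term in $x_4$ of $r_5$ by $\sqrt2\,(q_3)_{x_5}$, which is wrong; in the paper's explicit example one has $g_3=2x_3-\sqrt2\,x_4+\tfrac12 x_7$ while $(q_3)_{x_5}=x_7$. The Koszul computation you propose for the converse direction produces the correct system, so the argument is easily repaired, but as written your forward direction does not derive the stated normal form for $r_5$.
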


\begin{proof} We integrate (\ref{EI4(b)j=1}). We may choose $b^j=dx_j$, $j=5,6,7$, which implies $\bn\in I(dx_5)$ and $\bu_1\in I(dx_5, dx_6)$. Now (\ref{EI4(b)j=1}) gives $db^4\in I(dx_5)$. Hence we may choose $b^4=dx_4$. Then $\bv\in I_0$. Again using (\ref{EI4(b)j=1}) we see that we may choose $b^k=dx_k+q_k dx_6$, $k=2,3$. Because of
\begin{gather*} dq_3\wedge dx_6=-\sqrt 2 \bn\wedge dx_4 -\by_2\wedge dx_5 +\bv\wedge dx_6\in I\big(dx_{(4,5)}, \by_2\wedge dx_5\big) +I_1\end{gather*}
we obtain $q_3=q_3(x_5,x_6,x_7)$ and $\by_2\in I(dx_4)+I_0$. Furthermore, we have
\begin{gather*}
 dq_2\wedge dx_6 =-\bn\wedge b^3-\sqrt 2 \bu_1\wedge dx_4 -\by_1\wedge dx_5 -\bv\wedge dx_7\\
\hphantom{dq_2\wedge dx_6}{} \in I\big(dx_{(3,5)},dx_{(4,5)},dx_{(4,6)}, \by_1\wedge dx_5\big) +I_1.
 \end{gather*}
This implies $q_2=q_2(x_4,x_5,x_6,x_7)$ and $\by_1\in I(dx_3,dx_4)+I_0$. Finally, (\ref{EI4(b)j=1}) shows $db^1\in I_0$, thus $b^1= dx_1 + r_5dx_5+r_6dx_6+r_7dx_7$. Then
\begin{gather*}
dr_5\wedge dx_5+dr_6\wedge dx_6+dr_7\wedge dx_7 = \bn\wedge b^2 -\bu_1\wedge b^3 - \sqrt 2 \bv\wedge dx_4+\by_1\wedge dx_6+\by_2\wedge dx_7\\
\hphantom{dr_5\wedge dx_5+dr_6\wedge dx_6+dr_7\wedge dx_7}{} \in I(dx_{(2,5)}, dx_{(3,5)}, dx_{(3,6)}, dx_{(4,5)}, dx_{(4,6)}, dx_{(4,7)}) +I_1,
\end{gather*}
hence $r_5=r_5(x_2,\dots, x_7)$, $r_6=r_6(x_3,\dots,x_7)$, $r_7=r_7(x_4,\dots,x_7)$.

Let the metric $g$ be defined by (\ref{Eip}) with respect to the local coordinates that we considered above. Then $\nabla b_1=0$ and
\begin{gather*}
\nabla b_2 = (r_5)_{x_2} b^5\otimes b_1,\qquad
\nabla b_3 = \big((r_5)_{x_3}b^5+\tfrac12 (r_6)_{x_3}b^6\big)\otimes b_1+\tfrac12 (r_6)_{x_3}b^5\otimes b_2,\\
\nabla b_4 = \big( (r_5)_{x_4}b^5+ \tfrac12 (r_6)_{x_4}b^6+\tfrac12 (r_7)_{x_4}b^7\big)\otimes b_1 + \big(\tfrac12 (r_6)_{x_4}b^5+(q_2)_{x_4} b^6\big)\otimes b_2\\
\hphantom{\nabla b_4 =}{} + \tfrac12 (r_7)_{x_4} b^5\otimes b_3,\\
b^2(\nabla b_7) = \tfrac12 \big( (r_6)_{x_7}-(r_7)_{x_6}+(q_3)_{x_5} \big)b^5+ (q_2)_{x_7}b^6 +(q_3)_{x_7}b^7.
\end{gather*}
Hence the holonomy of $g$ is contained in $\fh$ if and only if
\begin{gather*}
-2\sqrt2(r_5)_{x_2} =\sqrt2(r_6)_{x_3}=(r_7)_{x_4}, \qquad
{2\sqrt2}(r_5)_{x_3} =(r_6)_{x_4},\qquad (r_6)_{x_3}= {\sqrt2}(q_2)_{x_4}, \\
\sqrt2 (r_5)_{x_4} = (r_6)_{x_7}-(r_7)_{x_6}+(q_3)_{x_5},\qquad (r_6)_{x_4}=2\sqrt2 (q_2)_{x_7},\qquad (r_7)_{x_4}=2\sqrt2 (q_3)_{x_7},
\end{gather*}
which is equivalent to the assertion.
\end{proof}
\begin{Remark} Proceeding similarly as in the previous cases we see that in this case a general $\fh$-adapted coframe depends on 5~functions in 3~variables.
\end{Remark}

\begin{Example} Starting with $q_3(x_5,x_6,x_7)=(x_5+ x_6+x_7)x_7$, $\bar q_2(x_5,x_6,x_7)=2x_6x_7$, $\bar r_5=\bar r_6=\bar r_7=0$, we obtain
\begin{gather*}
q_2(x_4,\dots,x_7) = \sqrt2(x_5+x_6+2 x_7)x_4+2x_6 x_7 ,\\
r_5(x_2,\dots, x_7) = -(x_5+x_6+2 x_7)x_2+2 \sqrt2 x_3x_4+2 x_3x_6-x_4^2+ \tfrac1{\sqrt2} x_4x_7,\\
r_6(x_3,\dots,x_7) = 2(x_5+x_6+2x_7) x_3+4x_4^2+4\sqrt2 x_4 x_6,\\
r_7(x_4,\dots, x_7) = 2\sqrt 2(x_5+x_6+2x_7)x_4 .
\end{gather*}
To show that the holonomy is equal to $\fh$ we again compute parts of the curvature tensor:
\begin{gather*}
 R_{56}=-h\big(N,*,(1,1)^\top,*\big),\qquad R_{35}=h(0,2,0,*),\\
 R_{67}= h\big(0,*,(-2,0)^\top,*\big),\qquad R_{25}= h\big(0,0,0,(1,2)^\top\big),
 \end{gather*}
which generate $\fh$ as a Lie algebra.
\end{Example}

\subsection[Holonomy algebras containing $\fm$]{Holonomy algebras containing $\boldsymbol{\fm}$}
In the previous sections we concentrated on holonomy groups that are either maximal, i.e., isomorphic to $\fgl(2,\RR)\ltimes\fm$ or that are small in the sense that they do not contain the whole Lie algebra~$\fm$. For the sake of completeness we consider now also the remaining Berger algebras~$\fh$ of Type~I satisfying $\fm\subset \fh$. We will not give normal forms for these metrics here, but we will give an example of a~metric for each of these Lie algebras. This will complete the proof of Theorem~\ref{T}.

In all cases we will proceed as follows. Given a Berger algebra $\fh=\fa\ltimes\fm$, we choose functions $r_5$, $r_6$, $r_7$, $q_2$, $q_3$, $q_4$, $s_2$, $s_3$ and~$f$ that satisfy the differential equations (\ref{Eb1})--(\ref{Eb10}) and in addition the differential equations for $b^i(\nabla b_j)$ that are equivalent to the condition that the projection of the connection form to $\fgl(2,\RR)$ is contained in~$\fa$. This will ensure that the holonomy of the corresponding metric is contained in~$\fh$. Finally, in each case, one has to check that the holonomy is equal to $\fh$. This is done in the same way as for the various examples in Sections~\ref{S3.2}--\ref{S3.9}. If $\fa$ is equal to $\fco(2)$, $\fb_2$, $\fd$, $\RR\cdot C_a$, $\RR\cdot S$ or~$\RR\cdot \diag(1,\mu)$ for $\mu\not=0$, the calculations are easy since~$\fh$ is already generated (as a Lie algebra) by $\{R_{ij}\,|\, i<j\}$. If $\fa$ equals $\fsl(2,\RR)$, $\hat b_2$, $\RR\cdot\diag(1,0)$ or~$\fs_\lambda$, then we need also~$\nabla R$ to generate~$\fh$.

According to the described approach we now give functions $f$, $s_2$, $s_3$, $q_2$, $q_3$, $q_4$, $r_5$, $r_6$, $r_7$ for every $\fa\subset\fgl(2,\RR)$ for which $\fa\ltimes \fm$ is on the list in Theorem~\ref{T1}. We start with $\fa=\fsl(2,\RR)$:
\begin{gather*}
f(x_5,x_6,x_7) = e^{x_5},\qquad
s_2(x_2,\dots,x_7) = 0,\qquad
s_3(x_2,\dots,x_7) = x_2+x_4,\\
q_2(x_2,\dots,x_7) = \tfrac14 \sqrt2 x_4-1,\qquad
q_3(x_2,\dots,x_7) = -\sqrt2 e^{x_5}+x_6^2,\qquad
q_4(x_5,x_6,x_7) = 1,\\
r_5(x_1,\dots,x_7) = -\tfrac14 e^{x_5} \sqrt2 x_2x_4+x_6x_7,\qquad
r_6(x_1,\dots,x_7) = \tfrac12 x_3, \\
r_7(x_1,\dots,x_7) = \tfrac12 x_4^2- \sqrt2 x_2 .
\end{gather*}

We continue with $\fa\in \{\fb_2, \fd\}$. Starting with the functions
\begin{gather*}
f(x_5,x_6,x_7) = e^{x_6} ,\qquad
s_2(x_2,\dots, x_7) = x_4 x_7-x_4,\qquad
 s_3 (x_2, \dots, x_7) = 0,\\
q_3(x_2, \dots, x_7) = x_4 x_7,\qquad
q_4 (x_5,x_6,x_7) = x_7,\\
r_5(x_1,\dots, x_7) = e^{x_6} \big( x_4^2 x_7^2 - x_3 x_4 x_7 + x_3 x_7^2 - \tfrac23 x_4^3 + 2 x_4^2 x_7 - \tfrac { \sqrt{2}}2 x_1 \\
\hphantom{r_5(x_1,\dots, x_7) =}{} - x_2 x_4 + x_3 x_4 - x_3 x_7 - x_4^2\big),\\
r_6(x_2,\dots, x_7) = \sqrt{2} (-x_4^2 x_7 + 2 x_4 x_7^2 - x_2 x_7 + x_4^2 - 2 x_4 x_7) + x_1 ,\\
r_7(x_4,\dots, x_7) = \sqrt 2 (x_3 x_7 + x_4^2 - x_3),
\end{gather*}
and choosing respectively $q_2 (x_2, \dots, x_7) = x_2 + e^{x_3}$ or $q_2 (x_2, \dots, x_7) = x_2 $ we get holonomy equal to $\fb_2\ltimes \fm $ or $\fd\ltimes \fm$, respectively.

Recall that we already gave an example of a metric with holonomy $\RR\cdot N\ltimes \fm$ in \cite{FK}. It remains to consider those Berger algebras $\fh=\fa\ltimes \fm$ for which $\fa$ is equal to $\RR\cdot S$, $\hat \fb_2$, $\fco(2)$, $\RR\cdot C_a$, $\RR\cdot\diag(1,\mu)$, $\mu\in[-1,1]$ or to $\fs_\lambda$, $\lambda\in\RR$. We may assume $\lambda\not=1/2$ since we considered the case $\lambda=1/2$ already in \cite{FK}.

For all these $\fa$ we choose $f(x_5,x_6,x_7) = e^{x_5}$. Furthermore, \begin{gather*}s_2(x_2,\dots, x_7)=\alpha x_4x_6,\qquad q_3(x_2,\dots, x_7)=\beta x_4x_6,\end{gather*} where
\begin{gather*} (\alpha,\beta) = \begin{cases}
\big({-}\tfrac12, \tfrac12\big), & \text{if } \fa\in \{\RR\cdot S,\, \hat \fb_2,\, \fco(2) \},\\
(-a,a),& \text{if } \fa=\RR\cdot C_a,\\
(1,1),& \text{if } \fa=\RR\cdot\diag(1,-1),\\
\big({-}\tfrac{\mu}{1+\mu}, \tfrac1{1+\mu}\big), & \text{if } \fa=\RR\cdot\diag(1,\mu),\ \mu\in(-1,1],\\
 \big({-}\tfrac{\lambda-1}{2\lambda -1}, \tfrac \lambda{2\lambda-1}\big), & \text{if } \fa=\fs_\lambda.\\
\end{cases}\end{gather*}
Moreover,
\begin{gather*}
 s_3 (x_2, \dots, x_7) = \begin{cases}
x_6x_7,& \text{if } \fa \in\{ \hat \fb_2,\, \fs_\lambda ,\, \RR\cdot S,\, \RR\cdot\diag(1,\mu) \,|\,\mu\in [-1,1] \}, \\
x_6x_7+x_4x_7,& \text{if } \fa =\fco(2) , \\
x_6x_7+x_4x_6,& \text{if } \fa =\RR\cdot C_a ,
\end{cases} \\
q_2 (x_2, \dots, x_7) = \begin{cases}
-\tfrac12 x_4x_6,& \text{if } \fa=\RR\cdot S, \\
x_3x_5,& \text{if } \fa \in\{\hat\fb_2,\fs_\lambda\} , \\
0,& \text{if } \fa =\RR\cdot \diag(1,\mu) , \ \mu\in[-1,1], \\
x_4x_7,& \text{if } \fa =\fco(2),\\
x_4x_6,& \text{if } \fa =\RR\cdot C_a,
\end{cases}\\
q_4(x_5,x_6, x_7)=\begin{cases}x_6x_7,& \text{if } \fa=\RR\cdot S,\\
0,& \text{if } \fa=\RR\cdot \diag(1,-1),\\
2ax_6(x_5+x_7) ,& \text{if } \fa=\RR\cdot C_a,\\
x_6(x_5+x_7),& \text{else.}
\end{cases}
\end{gather*}
In order to define $r_5$, $r_6$, $r_7$, we put
\begin{gather*}
\rho_1(x_1,\dots,x_7) := -\tfrac1{\sqrt2}\big(x_1e^{x_5}+x_3\big)x_6+x_2x_7e^{x_5} ,\\
\rho_2(x_1,\dots,x_7) := -\tfrac12\big(\big((x_5+x_7)x_3+\tfrac32 x_4^2\big)x_6^2-x_3x_4\big)e^{x_5}, \\
\rho_3(x_1,\dots,x_7) := \sqrt 2 \big(\tfrac23 x_7^3+x_5x_7^2-x_2\big)x_6-e^{-x_5}x_4x_6 ,\\
\rho_4(x_1,\dots,x_7) := -\sqrt 2x_4(x_5+x_7)x_6^2+\tfrac 1{\sqrt 2}\big(x_2x_6+x_4^2\big).
\end{gather*}
Then, for $\fa=\RR\cdot S$,
\begin{gather*}
r_5(x_1,\dots, x_7) = \rho_2(x_1,\dots,x_7)+ \big({-}\tfrac 1{\sqrt2} x_1x_6+x_2x_7-2x_4x_6x_7^2+\tfrac12(x_3x_5+x_4x_7)x_6^2\big)e^{x_5},\\
r_6(x_1,\dots, x_7) = -\tfrac 1{\sqrt2}\big( 2x_4x_6^2x_7 +(x_2+x_3)x_6-x_4^2\big) , \\
r_7(x_1,\dots, x_7) = -\tfrac1{\sqrt2}(x_3x_6+4x_4x_7).
\end{gather*}
For $\fa=\hat \fb_2$,
\begin{gather*}
r_5(x_1,\dots, x_7) = (\rho_1+\rho_2)(x_1,\dots,x_7)-\big(x_3x_7+x_4^2\big)x_5x_6 e^{x_5}, \\
r_6(x_1,\dots, x_7) = (\rho_3+\rho_4)(x_1,\dots,x_7)-2\sqrt2 x_4x_5x_6x_7 , \\
r_7(x_1,\dots, x_7) = -\tfrac1{\sqrt2}(x_3x_6+4x_4x_7).
\end{gather*}
If $\fa=\RR\cdot\diag(1,\mu)$, $\mu\not=-1$, then
\begin{gather*}
r_5(x_1,\dots, x_7) = \big(\rho_1+\tfrac{2\mu}{\mu+1}\rho_2\big)(x_1,\dots, x_7)+\tfrac{\mu(\mu-1)}{2(1+\mu)^2}x_4^2x_6^2e^{x_5}, \\
r_6(x_1,\dots, x_7) = \big(\rho_3+\tfrac{2\mu}{\mu+1}\rho_4\big)(x_1,\dots, x_7), \qquad
r_7(x_1,\dots, x_7) = -\sqrt2\big(\tfrac{\mu}{1+\mu}x_3x_6 +2x_4x_7\big).
\end{gather*}
For $\fa=\RR\cdot\diag(1,-1)$,
\begin{gather*}
r_5(x_1,\dots, x_7) = \big(x_4^2x_6^2+x_2x_7-x_3x_4\big)e^{x5}, \qquad
r_6(x_1,\dots, x_7) = -\sqrt2\big(x_2x_6+x_4^2\big), \\
r_7(x_1,\dots, x_7) = \sqrt2(x_3x_6-2x_4x_7).
\end{gather*}
If $\fa=\fs_\lambda$, $\lambda\not=1/2$, then
\begin{gather*}
r_5(x_1,\dots, x_7) = \big(\rho_1+\tfrac{2(\lambda-1)}{2\lambda-1}\rho_2\big)(x_1,\dots,x_7) -\tfrac{\lambda-1}{2(2\lambda-1)^2}x_4^2x_6^2e^{x_5}-\big(x_3x_7+x_4^2\big)x_5x_6e^{x_5}, \\
r_6(x_1,\dots, x_7) = \big(\rho_3+\tfrac{2(\lambda-1)}{2\lambda-1}\rho_4\big)(x_1,\dots,x_7)-2\sqrt2 x_4x_5x_6x_7,\\
r_7(x_1,\dots, x_7) = -\sqrt2 \big(\tfrac{\lambda-1}{2\lambda -1}x_3x_6+2 x_4 x_7\big).
\end{gather*}
For $\fa=\fco(2)$,
\begin{gather*}
r_5(x_1,\dots, x_7) = \big(x_3x_4-\big(x_2x_6+x_4^2\big)(x_5+x_7)x_7-x_4^2x_7^2\big)e^{x_5} +(\rho_1+\rho_2)(x_1,\dots,x_7),\\
r_6(x_1,\dots, x_7) = \sqrt2\big(x_3x_7+x_4^2-x_6^2x_7^2\big(x_5^2+\tfrac43 x_5x_7+\tfrac12x_7^2\big)+\tfrac13x_6x_7^3\big) \\
\hphantom{r_6(x_1,\dots, x_7) =}{} +(\rho_3+\rho_4)(x_1,\dots,x_7) ,\\
r_7(x_1,\dots, x_7) = \sqrt2\big(2x_4x_6x_7(x_5+x_7)-x_2x_7-2x_4x_7-\tfrac12 x_3x_6\big),
\end{gather*}
and for $\fa=\RR\cdot C_a$, we put
\begin{gather*}
r_5(x_1,\dots, x_7) = \big(x_2x_4-2ax_6\big(x_2x_6+3x_4^2\big)(x_5+x_7)+(1 - 2a)(ax_3x_4+x_2x_7)-x_4^2x_6^2\big)e^{x_5} \\
 \hphantom{r_5(x_1,\dots, x_7) =}{} +2a(\rho_1+2a\rho_2)(x_1,\dots, x_7),\\
r_6(x_1,\dots, x_7) = \sqrt2\big(x_6x_3-8a^2x_5x_6^3x_7(x_5+x_7)+\big(a-2a^2\big)\big(x_2x_6+x_4^2\big)\big) \\
 \hphantom{r_6(x_1,\dots, x_7) =}{} -\tfrac{\sqrt2}6x_6^2x_7^2\big(16a^2x_6x_7-3\big) +2a(\rho_3+2a\rho_4)(x_1,\dots, x_7),\\
r_7(x_1,\dots, x_7) = -\sqrt 2\big(\big(x_4-4a(x_5+x_7)x_6^2+2x_7\big)x_4+x_6(ax_3+x_2)\big).
\end{gather*}

\subsection*{Acknowledgements} We thank the referees for their valuable and constructive comments.

\pdfbookmark[1]{References}{ref}
\LastPageEnding

\end{document}